\documentclass[11pt]{amsart}
\usepackage{amscd,amssymb,amsmath}
\usepackage[all]{xy}

\pagestyle{headings}
\textwidth = 6.5 in \textheight = 8.5 in \oddsidemargin = 0.0 in
\evensidemargin = 0.0 in \topmargin = 0.0 in \headheight = 0.0 in
\headsep = 0.5 in
\parskip = 0.1in
\parindent = 0.0in

\newtheorem{theorem}{Theorem}[section]
\newtheorem{lemma}[theorem]{Lemma}
\newtheorem{Lemma}[theorem]{Lemma}

\theoremstyle{definition}

\newtheorem{example}[theorem]{Example}
\newtheorem{proposition}[theorem]{Proposition}
\newtheorem{corollary}[theorem]{Corollary}

\theoremstyle{remark}
\newtheorem{remark}{Remark}[section]

\numberwithin{equation}{section}
\numberwithin{table}{section}
\linespread{1.2}

\newcommand{\zhclass}[1]{ \langle #1 \rangle}
\newcommand{\im}{\mathrm{Im}}

\newcommand{\tmod}[1]{\; (#1)}

\newcommand{\bbZ}{\mathbb{Z}}
\newcommand{\dg}{\overline{\deg}}

\begin{document}

\title[Mapping degrees between spherical $3$-manifolds]{Mapping degrees between spherical $3$-manifolds}
\author{Daciberg Gon\c calves}
\address{Dept. de Matem\'atica - IME - USP, Caixa Postal 66.281 - CEP 05314-
 970,
 S\~ao Paulo - SP, Brasil; FAX: 55-11-30916183}
\email{dlgoncal@ime.usp.br}

\author{Peter Wong}
\address{Department of Mathematics, Bates College, Lewiston,
ME 04240, U.S.A.; FAX: 1-207-7868331}
\email{pwong@bates.edu}

\author{Xuezhi Zhao}
\address{Department of Mathematics, Capital Normal University, Beijing 100048, China; FAX: 86-10-68900950}
\email{zhaoxve@mail.cnu.edu.cn}

\thanks{This work was initiated during the first and second authors' visit to Capital Normal University February 16 - 23, 2013.  The first author was supported in part by  Projeto Tem\'atico  Topologia Alg\'ebrica Geom\'etrica e Diferencial  2012/24454-8. The third author was supported in part by the NSF of China (11431009).}

\begin{abstract}
Let $D(M,N)$ be the set of integers that can be realized as the degree of a
map between two closed connected orientable manifolds $M$ and $N$ of the
same dimension. For closed $3$-manifolds with $S^3$-geometry $M$ and $N$,
every such degree $\deg f\equiv \overline {\deg}\psi$ $(|\pi_1(N)|)$ where
$0\le \overline {\deg}\psi <|\pi_1(N)|$ and $\overline {\deg}\psi$ only
depends on the induced homomorphism $\psi=f_{\pi}$ on the fundamental
group. In this paper, we calculate   explicitly the set $\{\overline
{\deg}\psi\}$ when $\psi$ is  surjective and  then we show how to 
determine  $\overline{\deg}(\psi)$   for arbitrary  
 homomorphisms. This leads to the determination of the set $D(M,N)$.
\end{abstract}
\date{\today}
\keywords{$3$-manifolds, mapping degrees}
\subjclass[2010]{Primary: 55M20; Secondary: 20E45}

\maketitle

\newcommand{\af}{\alpha}
\newcommand{\et}{\eta}
\newcommand{\ga}{\gamma}
\newcommand{\ta}{\tau}
\newcommand{\ph}{\varphi}
\newcommand{\bt}{\beta}
\newcommand{\lb}{\lambda}
\newcommand{\wh}{\widehat}
\newcommand{\sg}{\sigma}
\newcommand{\om}{\omega}
\newcommand{\cH}{\mathcal H}
\newcommand{\cF}{\mathcal F}
\newcommand{\N}{\mathcal N}
\newcommand{\R}{\mathcal R}
\newcommand{\Ga}{\Gamma}
\newcommand{\cc}{\mathcal C}
\newcommand{\bea} {\begin{eqnarray*}}
\newcommand{\beq} {\begin{equation}}
\newcommand{\bey} {\begin{eqnarray}}
\newcommand{\eea} {\end{eqnarray*}}
\newcommand{\eeq} {\end{equation}}
\newcommand{\eey} {\end{eqnarray}}
\newcommand{\ovl}{\overline}
\newcommand{\vv}{\vspace{4mm}}
\newcommand{\lra}{\longrightarrow}

\bibliographystyle{amsplain}

\section{Introduction}

The study of mapping degree has a very long history in topology. In particular, the existence of degree one maps between manifolds is an important aspect in the classification of manifolds.
Given two closed connected orientable manifolds $M$ and $N$ of the same dimension, let $D(M,N)=\{\deg f|f:M\to N\}$ denote the possible values of mapping degrees of maps from $M$ to $N$. While the determination of $D(M,N)$ has been of great interest to topologists, in general, very little has been done concerning the computation of $D(M,N)$. For example, the finiteness of the set $D(M,N)$ has been studied by various authors (see e.g. \cite{DSW,DP, DW1, DW2, LX,A}) and the study of $D(M):=D(M,M)$ for self-maps have also been investigated. We refer the reader to the survey paper \cite{Wang} for the study of degree up to 2000, and to the papers \cite{Du,SWW,SWWZ} and \cite{HKWZ} for mapping degrees on $3$-manifolds that are most relevant to our work.
Prompted by our earlier work \cite{GWZ3} on the fixed point theory of spherical $3$-manifolds, in this paper we determine $D(M,N)$ when $M$ and $N$ are closed $3$-manifolds with $S^3$ geometry.

\section{Fundamental groups of spherical $3$-manifolds}

A spherical $3$-manifold is an orbit space $S^3/G$, where $G$ is a finite group which acts freely on $S^3$. Hence $G$ is the fundamental group of $S^3/G$. Each spherical $3$-manifold is oriented by the canonical orientation of $S^3$.

\begin{proposition}\label{presentation} (cf. \cite{Scott})
The fundamental group of any spherical $3$-manifold is in the form of $G$ or $\bbZ_m\times G$,  where $G$ belongs to one of the groups listed below and $(m, |G|)=1$.
\begin{table}[h]\label{basic_pi1}
\begin{center}
\begin{tabular}{|l|l|l|l l|}
\hline
group  & condition & presentation & normal form & \\ \hline
$\mathbb{Z}_n$ &
 & $\zhclass{c \mid c^n=1}$
 & $c^s$, & $0\le s< n$
 \\ \hline
$D^*_{4n}$ & $2|n$
  & $\zhclass{ b, a \mid a^2 = b^n= (ab)^2, a^4 =1 }$
  &  $b^s$, $b^s a$, & $0 \le s< 2n$
\\ \hline
$O^*_{48}$ &
  & $\zhclass{ b, a \mid a^2 = b^3= (ab)^4, a^4 =1}$
  & see Lemma~\ref{conjTo48} &
\\ \hline
$I^*_{120}$ &
  & $\zhclass{ b, a \mid a^2 = b^3= (ab)^5, a^4 =1}$
  & see Lemma~\ref{conjTi120} &
\\ \hline
$T'_{8\cdot 3^q}$ & $q\ge 1$
 & \!\!\!$\begin{array}{ll}
 \langle b, a, w  \mid  a^2=b^2=(ab)^2,\\
  \ \ \ \ \ \ \ \ \ \ \ \ a^4=w^{3^q}=1,\\
  \ \ \ \ \ \ \ \ \ \ \ \ wa=bw, wb=abw \rangle
 \end{array}$
  & $b^s w^t$, $b^s a w^t$, &
  $
  0\le s<4,  \ 0\le t <3^q
  $
  \\   \hline
$D'_{n\cdot 2^q}$\ & $2\not|n, n>1, q>1$
 & $\zhclass{ u, w\mid u^{n}=w^{2^q} =1, uwu=w}$
 & $u^sw^t$ &
  $
  0\le s <n,  \ 0\le t <2^q
  $
 \\ \hline
\end{tabular}
\end{center}
\caption{types of fundamental groups of spherical $3$-manifolds}
\end{table}
\end{proposition}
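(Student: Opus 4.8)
The plan is to reduce the statement to the classification of finite groups acting freely on $S^3$ and then to read off the presentations and normal forms. Since $S^3$ is simply connected and $G$ acts freely, the orbit map $S^3\to S^3/G$ is the universal covering, so $\pi_1(S^3/G)\cong G$ with $G$ finite; thus it suffices to classify the finite groups that admit a free action on $S^3$. By the solution of the spherical space-form problem, every such action is equivalent to a free orthogonal one, so I may regard $G$ as an orientation-preserving, fixed-point-free finite subgroup of $SO(4)$.

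To obtain a usable model I would work quaternionically. Identifying $S^3$ with the group $Sp(1)$ of unit quaternions gives $SO(4)\cong (Sp(1)\times Sp(1))/\{\pm(1,1)\}$, with $(p,q)$ acting by $x\mapsto pxq^{-1}$; a finite subgroup of $SO(4)$ then lifts to a finite subgroup of $Sp(1)\times Sp(1)$ whose two coordinate projections are finite subgroups of $Sp(1)$. The finite subgroups of $Sp(1)$ form the classical ADE list: the cyclic groups, the binary dihedral groups, and the binary tetrahedral, octahedral and icosahedral groups. The freeness requirement translates, via $pxq^{-1}=x$, into the condition that no nontrivial element of the lift has its two quaternionic coordinates conjugate in $Sp(1)$ (equivalently, of equal real part), and it is this condition that sorts the admissible subgroups into the families of the table.

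Granting this, the direct-product normal form is extracted from the Milnor periodicity conditions: a group acting freely on $S^3$ has periodic cohomology of period dividing $4$, which forces every subgroup of order $p^2$ or $2p$ to be cyclic. A standard argument then produces a normal cyclic direct factor $\mathbb{Z}_m$ of order coprime to its complement, yielding the dichotomy $G$ versus $\mathbb{Z}_m\times G$ with $(m,|G|)=1$ recorded in the statement, the complementary factor being one of the six basic types. For each basic type I would exhibit generators inside $Sp(1)\times Sp(1)$ realizing the listed relations and then verify, by counting cosets, that the displayed words $c^s$, $b^s$, $b^sa$, $b^sw^t$, $u^sw^t$, \dots exhaust the group, thereby establishing the normal forms.

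The hard part will be the free-action case analysis in the quaternionic model: determining exactly which pairs of finite subgroups of $Sp(1)$, under which twisted identifications, assemble into a fixed-point-free subgroup of $SO(4)$, and checking that the outcome is precisely the six families — in particular the less familiar extensions $T'_{8\cdot 3^q}$ and $D'_{n\cdot 2^q}$, which do not arise as products of the standard binary polyhedral groups with a cyclic factor. This is bookkeeping rather than a single decisive idea, and it is exactly the content of the classification we cite from \cite{Scott}.
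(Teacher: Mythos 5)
The paper does not prove this proposition at all: it is imported wholesale from Scott's survey \cite{Scott}, so there is no internal argument to compare yours against. Your outline is the standard route to that classification and its main lines are correct: pass to the universal cover to identify $\pi_1(S^3/G)$ with $G$; model $SO(4)$ as $(Sp(1)\times Sp(1))/\{\pm(1,1)\}$ acting by $x\mapsto pxq^{-1}$; observe that $(p,q)$ has a fixed point on $S^3$ iff $p$ and $q$ are conjugate in $Sp(1)$ (equal real parts), so freeness is the non-conjugacy condition on all nontrivial elements; and use the ADE list of finite subgroups of $Sp(1)$ together with the $p^2$- and $2p$-conditions to constrain the possibilities. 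Two caveats are worth flagging. First, in the context of this paper a spherical $3$-manifold carries the $S^3$-geometry, so $G$ is already a subgroup of the isometry group $SO(4)$; invoking the full solution of the spherical space-form problem (elliptization) is only needed if one starts from an arbitrary topological free action, and that is a much heavier input than the statement requires. Second, the two places where your sketch says ``a standard argument'' are precisely where all the content lives: the case analysis of which pairs of finite subgroups of $Sp(1)$ assemble into fixed-point-free subgroups of $SO(4)$ (producing the families $T'_{8\cdot 3^q}$ and $D'_{n\cdot 2^q}$ rather than mere products of binary polyhedral groups with cyclic groups), and the derivation of the coprime splitting $\mathbb{Z}_m\times G$ with $(m,|G|)=1$ from the periodicity conditions. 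As you acknowledge, those steps are exactly the content of the cited classification, so your write-up is a correct framing of the proof rather than a self-contained proof --- which is consistent with how the paper itself treats the statement.
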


The binary tetrahedral group is denoted by $T^*_{24} =\{ a, b \mid a^2 = b^3= (ab)^3, a^4 =1\}$, which is actually a special case of the generalized tetrahedral group $T'_{8\cdot 3^q}$ with $q=1$, i.e. $T^*_{24} \cong T'_{8\cdot 3^1}$. One such isomorphism from $T'_{8\cdot 3^1}$ to $T^*_{24}$ is given by  $b\mapsto bab^{-1}, a\mapsto a, w\mapsto ba^{-1}$.

If $n$ is odd, then the generalized dihedral group $D^*_{4n}$ is the same as the dicyclic group $D'_{n\cdot 2^2}$. If $n=1$, then $D'_{n\cdot 2^q}$ is the cyclic group $\bbZ_{2^q}$. If $q=1$, then $D'_{n\cdot 2^q}$ is not the fundamental group of a $3$-dimensional spherical manifold except for the case $n=1$. Thus, all groups in Table \ref{basic_pi1} are distinct.

Here are some more characteristic subgroups and quotients of these groups.

\begin{table}[h]\label{char_subgp}
\begin{center}
\begin{tabular}{|l|l|l|l|l|}
\hline
group  &  commutators & abelianization  & center \\ \hline
$\mathbb{Z}_n$
 & $1$
 & $\langle c \rangle \cong \mathbb{Z}_n$
 & $\langle c \rangle \cong \mathbb{Z}_n$
  \\ \hline
$D^*_{4n}$\ ($2|n$)
  & $\langle b^2 \rangle \cong \mathbb{Z}_n$
  & $\langle  \bar b,  \bar a \rangle \cong \mathbb{Z}_2\oplus\mathbb{Z}_2$
  & $\langle b^n \rangle \cong \mathbb{Z}_2$
\\ \hline
$O^*_{48}$
  & $T^*_{24}$
  & $\mathbb{Z}_2$
  & $\langle a^2 \rangle \cong \mathbb{Z}_2$
\\ \hline
$I^*_{120}$
  & $I^*_{120}$
  & $1$
  & $\langle a^2 \rangle \cong \mathbb{Z}_2$
\\ \hline
$T'_{8\cdot 3^q}$
 & $\langle a,b \rangle\cong D^*_{4\cdot 2}$
 & $\langle w \rangle \cong \mathbb{Z}_{3^q}$
 & $\langle b^2, w^3 \rangle \cong \mathbb{Z}_{2\cdot 3^{q-1}}$
 \\ \hline
$D'_{n\cdot 2^q}$\ ($2\not|n$)
 & $\langle u \rangle\cong \mathbb{Z}_{n}$
 & $\langle w \rangle \cong \mathbb{Z}_{2^q}$
 & $\langle  w^2 \rangle \cong \mathbb{Z}_{2^{q-1}}$
  \\ \hline
\end{tabular}
\end{center}
\caption{characteristic subgroups}
\end{table}

Next, we describe, for each group in Table \ref{basic_pi1} except for the cyclic groups, the conjugacy classes of elements, subgroups and quotients.


\begin{Lemma}\label{conjD4n}
The set of conjugacy classes of $D^*_{4n}$ is given as follows.

\begin{table}[h]\label{cc-D*}
\begin{center}
\begin{tabular}{|l|c|l|}
\hline
  representative & order & all elements \\ \hline
  $[1]$  &  $1$ &  $\{1\}$  \\ \hline
  $[b^s]$\   $0< s <n$ &  $\frac{2n}{(n,s)}$ & $\{b^s, b^{2n-s}\}$  \\ \hline
  $[b^n]$ & $2$  &  $\{b^n\}$   \\ \hline
  $[a]$ &  $4$ &   $\{b^{2s}a\mid 0\le s<n \}$  \\ \hline
  $[ba]$ &  $4$ &   $\{b^{2s+1}a\mid 0\le s<n \}$  \\ \hline
\end{tabular}
\end{center}
\caption{conjugacy classes of $D^*_{4n}$}
\end{table}
\end{Lemma}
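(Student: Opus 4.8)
The plan is to read the conjugation action straight off the presentation and then sort the $4n$ normal-form elements $b^s, b^s a$ ($0\le s<2n$) into orbits. First I would record the two structural facts that drive everything. From $(ab)^2=a^2$ we get $abab=a^2$, hence $bab=a$ and therefore $aba^{-1}=b^{-1}$ (equivalently $ab^{-1}=ba$); combined with $a^4=b^{2n}=1$ this shows that $a^2=b^n$ is central of order $2$ (it commutes with $b$ since $a^2ba^{-2}=ab^{-1}a^{-1}=b$) and that $b$ has order exactly $2n$, as the normal form already asserts. These relations let me put any conjugate back into normal form.

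Since a conjugacy class is the orbit of an element under the maps $y\mapsto aya^{-1}$ and $y\mapsto byb^{-1}$ and their inverses, it suffices to compute the two generator actions. Conjugation by $b$ fixes every $b^s$ and sends $b^sa\mapsto b^{s+1}(ab^{-1})=b^{s+2}a$; conjugation by $a$ sends $b^s\mapsto b^{-s}$ and $b^sa\mapsto ab^s=b^{-s}a$. With these two actions in hand the classification is a matter of tracking orbits.

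For the rotations, the orbit of $b^s$ is $\{b^s,b^{-s}\}=\{b^s,b^{2n-s}\}$, which is a singleton exactly when $2s\equiv 0\ (\mathrm{mod}\ 2n)$, i.e. for $s=0$ (the class $[1]$) and $s=n$ (the central class $[b^n]$), and otherwise has two elements; the representatives $0<s<n$ then exhaust these classes, and the order of $b^s$ is $2n/\gcd(2n,s)$, read off from $\mathrm{ord}(b)=2n$. For the reflections, conjugation by $b$ shifts the exponent by $2$ and conjugation by $a$ negates it, so from $b^sa$ one reaches every reflection whose exponent has the same parity as $s$; in particular conjugation by powers of $b$ alone already sweeps out all $n$ reflections of a given parity, giving the two classes $[a]=\{b^{2s}a\}$ and $[ba]=\{b^{2s+1}a\}$, each of size $n$. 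Every reflection has order $4$, since $(b^sa)^2=b^s(ab^sa^{-1})a^2=b^sb^{-s}a^2=b^n$, which has order $2$.

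The main point to nail down carefully is the separation of the two reflection classes: I must check that no chain of generator-conjugations can turn an even-exponent reflection into an odd one. This is precisely the observation that both generator actions preserve the parity of the exponent modulo $2n$ — the $b$-action adds $2$, and the $a$-action sends $s$ to $2n-s$ with $2n$ even — so parity is a genuine conjugacy invariant and the two classes cannot merge. (The hypothesis $2\mid n$ plays no essential role here; it is imposed only to keep the list in Proposition \ref{presentation} non-redundant, since for odd $n$ the group coincides with a dicyclic group already present.) Finally I would verify completeness by the count $2+2(n-1)+2n=4n=|D^*_{4n}|$, which certifies that Table \ref{cc-D*} lists every conjugacy class exactly once.
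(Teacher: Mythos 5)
Your proposal is correct and follows essentially the same route as the paper, whose entire proof consists of recording the three conjugation formulas $b(b^ka)b^{-1}=b^{k+2}a$, $ab^la^{-1}=b^{-l}$, $a(b^ka)a^{-1}=b^{-k}a$ and leaving the orbit bookkeeping implicit; you simply carry out that bookkeeping, including the parity invariant separating $[a]$ from $[ba]$ and the count $2+2(n-1)+2n=4n$. One small remark: your order formula $2n/\gcd(2n,s)$ is the correct one, whereas the table's $2n/(n,s)$ disagrees with it (e.g.\ $n=6$, $s=4$), so your derivation in fact flags a typo in the stated table rather than a gap in your argument.
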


\begin{proof}
Note that $b(b^ka)b^{-1} = b^{k+2}a$, $ab^la^{-1} = b^{-l}$, $a(b^ka)a^{-1} = b^{-k}a$.
\end{proof}

\begin{Lemma}\label{subgroupD4n}
The non-trival subgroups and quotients of $D^*_{4n}$ are listed as follows.

\begin{table}[h]
\begin{center}
\begin{tabular}{|l|l|l|}
\hline
  subgroup type & conjugacy classes  & quotient \\ \hline
 $\bbZ_{\frac{2n}{k}}$  ($k|n$) &  $\{\zhclass{b^k}\}$
    &  $D_{2k}$ \\ \hline
 $\bbZ_{\frac{2n}{k}}$  ($k|2n$, $k\not|n$) &  $\{\zhclass{b^k}\}$
    &  $D^*_{4\cdot \frac{k}{2}}$  \\ \hline
  $\bbZ_{4}$   &  $\{\zhclass{b^{2j}a}\mid 0\le j<n \}$ &  $\bbZ_2$ if $n=2$ \\ \hline
  $\bbZ_{4}$   &  $\{\zhclass{b^{2j+1}a}\mid 0\le j<n \}$ &  $\bbZ_2$ if $n=2$ \\ \hline
 $D^*_{4\cdot\frac{n}{k}}$  ($k|n$, $2|\frac{n}{k}$) &  $\{\zhclass{b^k, b^{2j}a}\mid 0\le j<n \}$
    &   $\bbZ_2$ if $k=2$   \\ \hline
$D'_{\frac{n}{k}\cdot 2^2}$  ($k|n$, $2\not|\frac{n}{k}$)
  &  $\{\zhclass{b^k, b^{2j}a}\mid 0\le j<n \}$
  &   $\bbZ_2$ if $k=2$  \\ \hline
 $D^*_{4\cdot\frac{n}{k}}$  ($k|n$, $2|\frac{n}{k}$) &  $\{\zhclass{b^k, b^{2j+1}a}\mid 0\le j<n \}$
    &   $\bbZ_2$ if $k=2$   \\ \hline
$D'_{\frac{n}{k}\cdot 2^2}$  ($k|n$, $2\not|\frac{n}{k}$)
  &  $\{\zhclass{b^k, b^{2j+1}a}\mid 0\le j<n \}$
  &   $\bbZ_2$ if $k=2$  \\ \hline
\end{tabular}
\end{center}
\caption{subgroups and quotients of $D^*_{4n}$}
\end{table}
\end{Lemma}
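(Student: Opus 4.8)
The plan is to exploit the fact that $C=\langle b\rangle\cong\bbZ_{2n}$ is a normal index-two cyclic subgroup, so that every subgroup $H$ is controlled by its intersection $H\cap C$. First I would record the structural facts forced by the presentation with $2\mid n$: the relation $(ab)^2=a^2=b^n$ together with $a^4=1$ shows that $b$ has order $2n$, that $b^n=a^2$ is the unique central involution, that every element outside $C$ has the form $b^s a$ with $(b^s a)^2=b^n$ and hence order $4$, and that conjugation acts by $b(b^s a)b^{-1}=b^{s+2}a$ and $a(b^s a)a^{-1}=b^{-s}a$ (the formulas of Lemma~\ref{conjD4n}). The crucial observation extracted from these is that conjugation changes the $a$-index $s$ only by even increments or by sign, so the \emph{parity} of $s$ is a conjugacy invariant.

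Next I would split the enumeration according to whether $H\subseteq C$. If $H\subseteq C$ then $H=\langle b^k\rangle\cong\bbZ_{2n/k}$ for a unique divisor $k\mid 2n$; since $\langle b^k\rangle$ is characteristic in the normal subgroup $C$, it is normal in $D^*_{4n}$ and forms a singleton conjugacy class. Its quotient has order $2k$, and I would identify it by reducing the presentation modulo $b^k=1$: when $k\mid n$ the relation $a^2=b^n$ collapses to $a^2=1$, yielding the ordinary dihedral group $D_{2k}$, whereas when $k\mid 2n$ but $k\nmid n$ one checks $k$ is even and the image of $b^n$ is the involution $b^{k/2}$, so $a^2=b^{k/2}$ survives and the quotient is the binary dihedral group $D^*_{4\cdot k/2}$. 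This accounts for the first two rows.

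If $H\not\subseteq C$ then $HC=D^*_{4n}$, so $[H:H\cap C]=2$ and $H\cap C=\langle b^k\rangle$, whence $H=\langle b^k, b^s a\rangle$ for any $b^s a\in H\setminus C$. Because $(b^s a)^2=b^n\in H\cap C$, necessarily $k\mid n$. Writing $m=n/k$, the subgroup has order $4m$, is generated by the order-$2m$ element $b^k$ (inverted under conjugation by $b^s a$) together with $b^s a$ satisfying $(b^s a)^2=(b^k)^m$; this is exactly the presentation of $D^*_{4m}$ when $m$ is even and of the dicyclic group $D'_{m\cdot 2^2}$ when $m$ is odd, with the degenerate case $m=1$ (i.e. $k=n$) giving $\bbZ_4=\langle b^s a\rangle$. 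These supply rows three through eight. To pin down the conjugacy classes I would run the $b$-conjugation $s\mapsto s+2$: it acts transitively on all $a$-indices of a fixed parity, so subgroups with even $a$-index form one class and those with odd $a$-index another, provided this parity is genuinely an invariant of $H$, which holds precisely when $k$ is even (so that $\langle b^k\rangle$ contains only even powers of $b$); when $k$ is odd the two families merge. Finally, the quotient exists only when $H$ is normal, i.e. when conjugation by $b$ fixes $H$, which by $s\mapsto s+2$ happens iff $b^2\in\langle b^k\rangle$, i.e. iff $k\mid 2$; discarding the improper case $k=1$ leaves $k=2$, where $H$ has index $2$ and quotient $\bbZ_2$, matching the last entries (and $k=n=2$ for the $\bbZ_4$ rows).

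The main obstacle I anticipate is the bookkeeping of the conjugacy classes rather than any single hard step: one must verify that the even- and odd-$a$-index families are genuinely distinct conjugacy classes (not merely distinct subgroups) exactly when $k$ is even, track the coincidences that occur for odd $k$, and confirm that the $b$-conjugation orbit really sweeps out the entire list $\{0\le j<n\}$ of representatives displayed in each row. The identification of the quotients in the first two rows also requires care to see which defining relation degenerates, since the distinction between $D_{2k}$ and $D^*_{4\cdot k/2}$ turns entirely on whether $b^n$ becomes trivial or remains the central involution modulo $b^k$.
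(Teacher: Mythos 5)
Your proof is correct and follows essentially the same route as the paper's: both reduce every non-cyclic subgroup to the form $\langle b^k, b^s a\rangle$, identify it as $D^*_{4\cdot\frac{n}{k}}$ or $D'_{\frac{n}{k}\cdot 2^2}$ from the relation $(b^s a)^2=b^n=(b^k)^{n/k}$, and detect normality by asking whether $b^2\in\langle b^k\rangle$. Your organization via the index-two subgroup $C=\langle b\rangle$ and the intersection $H\cap C$ is a slightly cleaner way to run the case analysis (and you additionally verify the quotient column for the cyclic rows, which the paper's proof leaves implicit), but the substance is the same.
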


\begin{proof}
The presentations of cyclic subgroups follow from Lemma~\ref{conjD4n}. Since $a^2=b^n$, any non-cyclic subgroup $H$ of $D^*_{4n}$ must have the form of either $\zhclass{b^k, b^la}$ or $\zhclass{b^ka, b^la}$. Note that $b^la$ is conjugate to either $a$ or $ba$. There are four possibilities: $\zhclass{b^k, a}$, $\zhclass{b^ka, a}$, $\zhclass{b^k, ba}$ and $\zhclass{b^ka, ba}$.

Clearly, $\zhclass{b^ka, a}=\zhclass{b^k, a}$. Since $b$ has order $2n$, we may assume that $k|2n$. If $\frac{2n}{k}$ is even, i.e. $\frac{n}{k}$ is an integer, then we have the presentation $\zhclass{b^k, a} = \{b^k, a\mid a^2 = (b^k)^{\frac{n}{k}}= (ab^k)^2, a^4 =1\}$. Thus $\zhclass{b^k, a}$ is isomorphic to $D^*_{4\cdot\frac{n}{k}}$ if $\frac{n}{k}$ is even; to $D'_{\frac{n}{k}\cdot 2^2}$ if $\frac{n}{k}$ is odd. In the case that $\frac{2n}{k}$ is odd, we have that $(n,k)=\frac{k}{2}$. Since $a^2=b^n$, the subgroup $\zhclass{b^k, a} = \zhclass{b^k, b^n, a} = \zhclass{b^{\frac{k}{2}},  a}$ is isomorphic to $D'_{\frac{2n}{k}\cdot 2^2}$ by the argument above.

Suppose that $\zhclass{b^k, a}$ is normal. Then $bab^{-1}=b^2a$ must lie in $\zhclass{b^k, a}$. It follows that $b^2\in \zhclass{b^k, a}$. Hence $k$ must be $1$ or $2$. Note that $\zhclass{b, a}$ is the whole group $D^*_{4n}$. Thus, $k=2$ is the unique non-trivial case that $\zhclass{b^k, a}$ is normal.

The subgroup $\zhclass{b^k, ba}$ is isomorphic to $\zhclass{b^k, a}$, by using the automorphism given by $a\mapsto b^{-1}a,\ b\mapsto b$. This situation is the same as that of $\zhclass{b^k, a}$.

Note that $\zhclass{b^ka, ba} = \zhclass{b^{k-1}, ba}$. This is also the case we have already considered.
\end{proof}

Next, we consider the binary octahedral group $O^*_{48}$ and the binary icosahedral group $I^*_{120}$. The subgroups and quotients of these groups are well-known and the results can be found, for example, in \cite[Appendix]{GG}.

\begin{Lemma}\label{subgroupO48}
The subgroups of $O^*_{48}$ are $\mathbb \bbZ_2, \mathbb \bbZ_3, \mathbb \bbZ_4, \mathbb \bbZ_6, \mathbb \bbZ_8$, $Q_8$, $D'_{3\cdot 2^2}$, $Q_{16}$ and $T^*_{24}$.

The normal subgroups are $ \mathbb \bbZ_2,  Q_8 \  and \  T^*_{24}$.

The only quotient  which is  the fundamental group of a spherical $3$-manifold is  $\mathbb \bbZ_2$, which can be embedded in $O^*_{48}$.
\end{Lemma}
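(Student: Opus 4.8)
The plan is to exploit the central extension $1 \to Z \to O^*_{48} \to S_4 \to 1$, where $Z=\langle a^2\rangle\cong\bbZ_2$ is the center recorded in Table~\ref{char_subgp}. The arithmetic fact that drives everything is that $O^*_{48}$, being a binary polyhedral group, has a \emph{unique} element of order $2$, namely the central element $a^2$; equivalently its Sylow $2$-subgroup is generalized quaternion. First I would record this together with the identification $O^*_{48}/Z\cong S_4$ coming from the double cover $SU(2)\to SO(3)$ restricted to the octahedral group. Since any subgroup of even order contains an involution, and the only involution is $a^2$, every even-order subgroup contains $Z$; hence $H\mapsto H/Z$ is a bijection (respecting conjugacy, because $Z$ is central) between the even-order subgroups of $O^*_{48}$ and the subgroups of $S_4$. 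The odd-order subgroups are handled separately by Sylow's theorem: the odd part of $48$ is $3$, so they are only $\{1\}$ and the Sylow $3$-subgroups $\cong\bbZ_3$, of which there are $n_3=4$.

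Next I would run through the subgroup lattice of $S_4$ — up to conjugacy the subgroups are $1$, $\bbZ_2$ (two classes), $\bbZ_3$, $\bbZ_4$, $V_4$ (two classes), $S_3$, $D_4$, $A_4$, $S_4$ — and lift each to its preimage of twice the order. The isomorphism type of a preimage $H$ is then pinned down by the two constraints that $H/Z\cong K$ and that $H$ has a unique involution: a cyclic $K=\bbZ_{2^j}$ forces $H\cong\bbZ_{2^{j+1}}$, the Klein four-group forces $H\cong Q_8$, $S_3$ forces the dicyclic group $D'_{3\cdot 2^2}$, $D_4$ forces $Q_{16}$, and $A_4$ forces $T^*_{24}$. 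Collecting these with the odd case yields exactly $\bbZ_2,\bbZ_3,\bbZ_4,\bbZ_6,\bbZ_8,Q_8,D'_{3\cdot 2^2},Q_{16},T^*_{24}$ (together with $O^*_{48}$ itself). This identification of preimage types — in particular ruling out split extensions such as $\bbZ_2\times V_4$ or $\bbZ_2\times D_4$, which have several involutions — is the step I expect to be the most delicate, and it is precisely the computation tabulated in \cite[Appendix]{GG}, which I would cite to keep the argument short.

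For the normal subgroups I would again pass through the correspondence: a subgroup containing $Z$ is normal in $O^*_{48}$ iff its image is normal in $S_4$. The normal subgroups of $S_4$ are $1,V_4,A_4,S_4$, lifting to $Z\cong\bbZ_2$, $Q_8$, $T^*_{24}$ and $O^*_{48}$; note that $T^*_{24}$ is already visible in Table~\ref{char_subgp} as the commutator subgroup, with quotient the abelianization $\bbZ_2$. It remains to rule out normal subgroups of odd order, which is immediate since $n_3=4\neq 1$ shows the $\bbZ_3$'s are not normal. Hence the proper non-trivial normal subgroups are exactly $\bbZ_2,Q_8,T^*_{24}$.

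Finally, for the quotients I would compute $O^*_{48}/Z\cong S_4$, $O^*_{48}/Q_8\cong S_4/V_4\cong S_3$, and $O^*_{48}/T^*_{24}\cong\bbZ_2$. The decisive observation is that a finite group acting freely by isometries on $S^3$ has at most one involution, and it must be central, since an orthogonal involution of $S^3$ with no fixed point has no eigenvalue $+1$ and hence equals $-I\in SO(4)$. Both $S_4$ and $S_3$ contain non-central involutions, so by Proposition~\ref{presentation} neither appears in Table~\ref{basic_pi1}; only $\bbZ_2$ survives, and it is realized as the center of $O^*_{48}$, which gives the embedding asserted in the last claim. This last part is conceptually the cleanest step, so I would present it in full rather than by citation.
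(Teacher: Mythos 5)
Your argument is correct, and it is genuinely more than what the paper provides: the paper gives no proof of this lemma at all, merely the remark before its statement that the subgroup and quotient data of $O^*_{48}$ and $I^*_{120}$ are well known and can be found in \cite[Appendix]{GG}. Your route — the central extension $1\to Z\to O^*_{48}\to S_4\to 1$ together with the fact that the unique involution $a^2$ is central, so that every even-order subgroup contains $Z$ and the lattice of even-order subgroups is carried bijectively (and conjugacy-equivariantly) onto the subgroup lattice of $S_4$, with the odd part disposed of by Sylow — is a clean, self-contained derivation of all three assertions. Note that the step you flag as "most delicate," identifying the isomorphism type of each preimage, is in fact already forced by the two constraints you state: $H/Z\cong K$ with $Z$ central rules out the split extensions (e.g.\ $H/Z$ cyclic makes $H$ abelian, hence $\bbZ_4$, $\bbZ_6$ or $\bbZ_8$; the unique involution then excludes $\bbZ_2\times V_4$, $\bbZ_2\times D_4$, $\bbZ_3\times Q_8$, etc.), so your fallback citation to \cite[Appendix]{GG} is not actually needed and the proof stands on its own. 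For the final claim, your eigenvalue argument that a fixed-point-free orthogonal involution of $S^3$ must be $-I$ (hence central) is valid in the paper's setting, where spherical $3$-manifolds are by definition $S^3/G$ with $G\le SO(4)$ acting freely; alternatively one can simply observe that $S_4$ is absent from Table 1 and that $S_3\cong D'_{3\cdot 2^1}$ falls under the excluded case $q=1$ noted after Proposition~\ref{presentation}. What your approach buys is a verifiable proof in place of an external reference; what it costs is length, which is presumably why the authors chose the citation.
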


\begin{Lemma}\label{conjTo48}
The set of conjugacy classes of $O^*_{48}$ is given as follows.

\begin{table}[h]
\begin{center}
\begin{tabular}{|l|c|l|}
\hline
  class & order & all elements \\ \hline

 $[a]$ & $4$ & $\{ a, a^{-1},bab^{-1}, ba^{-1}b^{-1}, b^{-1}ab, b^{-1}a^{-1}b, abab^{-1}a$,\\
 & &  \ \ $abab^{-1}a^{-1},ab^{-1}aba,ab^{-1}aba^{-1},bab^{-1}aba,bab^{-1}aba^{-1}\}$ \\  \hline
  $[baba]$ & $4$ & $\{baba, baba^{-1},b^{-1}ab^{-1}a,b^{-1}ab^{-1}a^{-1},
    bab^{-1}ab,bab^{-1}a^{-1}b\}$ \\  \hline
 $[b^2]$ & $3$ &  $\{b^{2},b^{-2},aba,ab^{-1}a,bab^{-1}a,b^{-1}aba, abab^{-1},ab^{-1}ab\}$ \\  \hline
  $[b]$ & $6$ & $\{b,b^{-1},aba^{-1},ab^{-1}a^{-1},bab^{-1}a^{-1},b^{-1}aba^{-1},aba^{-1}b^{-1},
  ab^{-1}a^{-1}b\}$  \\ \hline
  $[ba]$ & $8$ & $\{ba,b^{-1}a^{-1},ab,a^{-1}b^{-1},bab,b^{-1}a^{-1}b^{-1}\}$ \\ \hline
  $[ba^{-1}]$ & $8$ & $\{ba^{-1},b^{-1}a,ab^{-1},a^{-1}b,ba^{-1}b,b^{-1}ab^{-1}\}$ \\ \hline
  $[a^2]$ & $2$ & $\{a^{2}\}$ \\ \hline
   $[1]$ & $1$ & $\{1\}$ \\ \hline
\end{tabular}
\end{center}
\caption{conjugacy classes of $O^*_{48}$}
\end{table}
\end{Lemma}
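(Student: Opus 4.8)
The plan is to read off the conjugacy classes from the structure of $O^*_{48}$ as the binary octahedral group, that is, from the central extension $1\to\langle a^2\rangle\to O^*_{48}\to S_4\to 1$ whose kernel $\langle a^2\rangle\cong\bbZ_2$ is the center and whose quotient is the octahedral rotation group $S_4$. Writing $\pi$ for the quotient map, the relations $a^2=b^3=(ab)^4$ show that $\pi(b)$ is a $3$-cycle and $\pi(ab)$ a $4$-cycle, so $\pi$ is onto $S_4$. Since $a^2$ is central, $\pi$ sends each conjugacy class of $O^*_{48}$ onto a conjugacy class of $S_4$, and for each class $C\subseteq S_4$ the preimage $\pi^{-1}(C)$ is a union of one or two classes of $O^*_{48}$. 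I would therefore list the five classes of $S_4$ — the identity, the six transpositions, the three double transpositions, the eight $3$-cycles, and the six $4$-cycles — and decide for each whether its preimage splits.

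The organizing principle is that, fixing a lift $\tilde g$ of a representative $\bar g\in S_4$, the two elements of $\pi^{-1}(\bar g)$ are $\tilde g$ and $a^2\tilde g$; the preimage $\pi^{-1}(C)$ is a single class exactly when $\tilde g\sim a^2\tilde g$, and splits into two classes otherwise. Since $O^*_{48}$ has a unique involution, namely its central element $a^2$, any lift of an involution of $S_4$ squares to $a^2$ and hence has order $4$; for such a lift $a^2\tilde g=\tilde g^{-1}$, so the question reduces to whether $\tilde g$ is conjugate to its inverse.

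For the two involution classes one checks by direct conjugation with the presentation that $a\sim a^{-1}=a^2a$ and $baba\sim baba^{-1}=a^2\,baba$, so neither preimage splits, producing the single classes $[a]$ of size $12$ and $[baba]$ of size $6$. For the $3$-cycles the splitting is forced by orders: if $\tilde g^3=1$ then $(a^2\tilde g)^3=a^2\neq 1$, so the two lifts of a $3$-cycle have orders $3$ and $6$ and cannot be conjugate; since conjugation preserves order, the sixteen preimages of the eight $3$-cycles separate into the order-$3$ class $[b^2]$ and the order-$6$ class $[b]$, each of size $8$ (indeed $a^2b=b^4=(b^2)^2$ already lands in $[b^2]$).

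The main obstacle is the $4$-cycle class, where both lifts $ba$ and $a^2\,ba=ba^{-1}$ have order $8$, so orders give no information. Here I would use centralizers: a $4$-cycle of $S_4$ has cyclic centralizer of order $4$, whose preimage under $\pi$ is exactly the cyclic group $\langle ba\rangle$ of order $8$ (note $a^2=(ba)^4$); any $h$ with $h(ba)h^{-1}=a^2\,ba$ would satisfy $\pi(h)\in C_{S_4}(\pi(ba))$, hence $h\in\langle ba\rangle$, forcing $h$ to commute with $ba$ — a contradiction. Thus $ba\not\sim ba^{-1}$ and the preimage splits into the two order-$8$ classes $[ba]$ and $[ba^{-1}]$, each of size $6$. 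Together with the two central singletons $[1]$ and $[a^2]$ coming from the identity of $S_4$, this yields exactly the eight classes of the table, of total size $1+1+12+6+8+8+6+6=48$; the explicit list of elements in each class is then completed by routine conjugation from the presentation.
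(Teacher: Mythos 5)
Your argument is correct, and it is worth noting that the paper itself offers no proof of this lemma at all: the conjugacy-class tables for $O^*_{48}$ and $I^*_{120}$ are simply stated (with the surrounding subgroup lemmas referred to the literature), the implicit justification being direct computation in the presentation, as is done explicitly for $D^*_{4n}$ and $T'_{8\cdot 3^q}$. Your route through the central extension $1\to\langle a^2\rangle\to O^*_{48}\to S_4\to 1$ is a genuinely different and more structural approach: it derives the number of classes, their sizes, orders and representatives from the five classes of $S_4$ together with a splitting criterion, and the three splitting decisions are handled by sound arguments (conjugacy to the inverse for the two involution classes, the order obstruction for the $3$-cycles, and the centralizer computation $\pi^{-1}(C_{S_4}(\pi(ba)))=\langle ba\rangle$ for the $4$-cycles, which is the one case where something nontrivial is needed). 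This buys a conceptual explanation of the table and transfers verbatim to $I^*_{120}$ over $A_5$ in Lemma~\ref{conjTi120}, whereas brute-force conjugation does not scale well there. Two small points you should make explicit: the claim that $a^2$ is the unique involution needs a one-line justification (e.g.\ $O^*_{48}$ embeds in the unit quaternions $S^3$, where $-1$ is the only element of order $2$, or equivalently its Sylow $2$-subgroup is generalized quaternion); and the final step of assigning each explicit word of the table to its class is deferred to ``routine conjugation'' --- that is acceptable since your argument already pins down the sizes and orders, but it is the only part of the lemma's literal content you do not carry out.
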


\begin{Lemma}\label{subgroupI120}
The subgroups of $I^*_{120}$ are  $   \mathbb \bbZ_2, \mathbb \bbZ_3, \mathbb \bbZ_4, \mathbb \bbZ_5, \mathbb \bbZ_6$, $Q_8$, $\mathbb \bbZ_{10}$, $D'_{12}$, $D'_{20}$ and   $T^*_{24}$.

 The only normal subgroup is $ \mathbb \bbZ_2$.

The only quotient  is $A_5$ which is not the fundamental group of a spherical $3$-manifold.
\end{Lemma}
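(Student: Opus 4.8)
The plan is to build everything on the classical identification $I^*_{120}\cong \mathrm{SL}(2,5)$, whose decisive feature is that it contains a \emph{unique} element of order $2$: the central element $z=a^2$ generating the center $\bbZ_2$ (Table~\ref{char_subgp}), with central quotient $I^*_{120}/\langle z\rangle\cong A_5$. I would first record the set of element orders by reading them off the double cover $I^*_{120}\to A_5$ (involutions of $A_5$ lift to order $4$, elements of order $3$ to order $3$ or $6$, and of order $5$ to order $5$ or $10$), so that the only orders occurring are $1,2,3,4,5,6,10$; in particular there is no element of order $15$.

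Next I would split an arbitrary subgroup $H\le I^*_{120}$ by the parity of $|H|$. If $|H|$ is odd then it divides the odd part $15$ of $120$, and since there is no element of order $15$, Cauchy's theorem leaves only $H\cong\bbZ_3$ and $H\cong\bbZ_5$. If $|H|$ is even then $H$ contains an involution, which must be $z$; hence \emph{every} subgroup of even order contains $\langle z\rangle$, and by the correspondence theorem these are precisely the full preimages of the subgroups $\overline H\le A_5$, with $|H|=2|\overline H|$.

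I would then run through the well-known subgroups of $A_5$ — the trivial group, $\bbZ_2,\bbZ_3,\bbZ_5$, the Klein four group $\bbZ_2\oplus\bbZ_2$, $D_6\cong S_3$, $D_{10}$, $A_4$, and $A_5$ — and pin down each preimage using the single-involution constraint. A $2$-group with only one involution is cyclic or generalized quaternion, so the preimages of $\bbZ_2$ and of $\bbZ_2\oplus\bbZ_2$ are $\bbZ_4$ and $Q_8$ (note $Q_8/\langle z\rangle\cong\bbZ_2\oplus\bbZ_2$). A central extension of $S_3$ (resp.\ $D_{10}$) by $\langle z\rangle$ is either the split product $\bbZ_2\oplus S_3$ (resp.\ $\bbZ_2\oplus D_{10}$), which has several involutions, or the non-split one with a unique involution, namely the dicyclic group $D'_{12}$ (resp.\ $D'_{20}$); the unique-involution property forces the latter. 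Likewise the order-$24$ preimage of $A_4$ must be the non-split central extension $\bbZ_2\to G\to A_4$ rather than $\bbZ_2\oplus A_4$, i.e.\ the binary tetrahedral group $T^*_{24}\cong\mathrm{SL}(2,3)$. Together with $\bbZ_6$ and $\bbZ_{10}$ (the preimages of $\bbZ_3$ and $\bbZ_5$, abelian because $z$ is central) and $I^*_{120}$ itself, this reproduces exactly the list in the statement.

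For normality I would again pass to $A_5$: if $N$ is normal in $I^*_{120}$ its image in $A_5$ is normal, hence trivial or all of $A_5$ by simplicity. In the first case $N\le\langle z\rangle$, giving $N=1$ or $N=\bbZ_2$; in the second $N\langle z\rangle=I^*_{120}$, so $|N|\in\{60,120\}$, and an index-$2$ (order $60$) normal subgroup is impossible since $I^*_{120}$ is perfect (trivial abelianization, Table~\ref{char_subgp}) and admits no surjection onto $\bbZ_2$. Thus $\bbZ_2$ is the only proper nontrivial normal subgroup and $A_5$ the only proper nontrivial quotient; and $A_5$ is not a spherical $3$-manifold group because it contains the non-cyclic abelian subgroup $\bbZ_2\oplus\bbZ_2$, whereas a group acting freely on $S^3$ cannot contain $\bbZ_p\oplus\bbZ_p$ (equivalently, $A_5$ is absent from Proposition~\ref{presentation}). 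The main obstacle I expect is not any of these steps individually but verifying cleanly that the single-involution property really does all the separating work — distinguishing $D'_{12},D'_{20}$ from the dihedral groups of the same order and selecting $T^*_{24}$ over $\bbZ_2\oplus A_4$ — which is where I would concentrate the care.
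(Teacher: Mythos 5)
Your argument is correct, but it is worth noting that the paper does not actually prove Lemma~\ref{subgroupI120} at all: it simply defers to the literature (``the results can be found, for example, in [GG, Appendix]''), so your write-up is a self-contained substitute rather than a variant of an argument in the text. The route you take --- identifying $I^*_{120}$ with $\mathrm{SL}(2,5)$, exploiting the unique involution $z=a^2$ to show every even-order subgroup is the full preimage of a subgroup of $A_5$, and then sorting the preimages of $1,\bbZ_2,\bbZ_3,\bbZ_2\oplus\bbZ_2,\bbZ_5,S_3,D_{10},A_4,A_5$ into $\bbZ_2,\bbZ_4,\bbZ_6,Q_8,\bbZ_{10},D'_{12},T^*_{24},D'_{20},I^*_{120}$ --- is the standard and efficient one, and each of the separating steps you flag as delicate does go through: a unique central involution rules out $\bbZ_2\oplus\bbZ_2$, $\bbZ_2\times S_3$, $\bbZ_2\times D_{10}$ and $\bbZ_2\times A_4$ in favor of $\bbZ_4$, $Q_8$, the two dicyclic groups and $\mathrm{SL}(2,3)$, since in each case the only competing extension with central kernel and the right quotient has extra involutions. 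The odd-order case (only $\bbZ_3$ and $\bbZ_5$, since there is no element of order $15$), the normality argument via simplicity of $A_5$ plus perfectness of $I^*_{120}$, and the exclusion of $A_5$ as a spherical space form group (it contains $\bbZ_2\oplus\bbZ_2$, or simply is absent from Proposition~\ref{presentation}) are all sound. What your approach buys is independence from the cited appendix and consistency with the element orders already visible in the conjugacy class table of Lemma~\ref{conjTi120}; what the paper's citation buys is brevity. No gaps.
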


\newpage

\begin{Lemma}\label{conjTi120}
The set of conjugacy classes of $I^*_{120}$ is given as follows.
\begin{table}[h]
\begin{center}
\begin{tabular}{|l|c|l|}
\hline
  class & order & all elements \\ \hline
$[a]$ & $4$  &  $
\begin{array}{l}
\{a,a^{-1},bab^{-1},ba^{-1}b^{-1},b^{-1}ab,b^{-1}a^{-1}b,abab^{-1}a,abab^{-1}a^{-1},
ab^{-1}aba, \\
 \ ab^{-1}aba^{-1},babab^{-1}ab^{-1},babab^{-1}a^{-1}b^{-1},bab^{-1}abab^{-1},
bab^{-1}aba^{-1}b^{-1},\\
 \ b^{-1}abab^{-1}ab,b^{-1}abab^{-1}a^{-1}b,b^{-1}ab^{-1}abab,b^{-1}ab^{-1}aba^{-1}b,
babab^{-1}aba,\\
 \ babab^{-1}aba^{-1},b^{-1}ab^{-1}abab^{-1}a,b^{-1}ab^{-1}abab^{-1}a^{-1},
bab^{-1}abab^{-1}ab,\\
 \ bab^{-1}abab^{-1}a^{-1}b, abab^{-1}abab^{-1}a,abab^{-1}abab^{-1}a^{-1},
ab^{-1}abab^{-1}aba,\\
 \ ab^{-1}abab^{-1}aba^{-1},bab^{-1}abab^{-1}aba,bab^{-1}abab^{-1}aba^{-1}\}
\end{array}
$  \\ \hline
$[baba]$ & $5$ &
$
\begin{array}{l}
\{baba,b^{-1}ab^{-1}a,abab,ab^{-1}ab^{-1},bab^{-1}a^{-1}b,b^{-1}aba^{-1}b^{-1},abab^{-1}aba,\\
\ ab^{-1}abab^{-1}a,bab^{-1}abab^{-1}a,b^{-1}abab^{-1}aba,abab^{-1}abab^{-1},ab^{-1}abab^{-1}ab\}
\end{array}
$
 \\ \hline
$[baba^{-1}]$ & $10$ &
$\begin{array}{l}
\{baba^{-1},b^{-1}ab^{-1}a^{-1},aba^{-1}b,ab^{-1}a^{-1}b^{-1},bab^{-1}ab,b^{-1}abab^{-1},
abab^{-1}aba^{-1},\\
 \ ab^{-1}abab^{-1}a^{-1},bab^{-1}abab^{-1}a^{-1},b^{-1}abab^{-1}aba^{-1},
abab^{-1}aba^{-1}b^{-1},\\
 \ ab^{-1}abab^{-1}a^{-1}b\}
\end{array}
$  \\ \hline
$[b^{2}]$ &  $3$ &
$\begin{array}{l}
\{b^{2},b^{-2},aba,ab^{-1}a,babab^{-1},bab^{-1}ab^{-1},b^{-1}abab,b^{-1}ab^{-1}ab,b
abab^{-1}a,\\
 \ bab^{-1}aba,b^{-1}abab^{-1}a^{-1},b^{-1}ab^{-1}aba^{-1},abab^{-1}ab,
abab^{-1}a^{-1}b^{-1}, \\
 \ ab^{-1}abab,ab^{-1}aba^{-1}b^{-1},babab^{-1}ab, bab^{-1}abab,
b^{-1}abab^{-1}a^{-1}b^{-1},\\
 \ b^{-1}ab^{-1}aba^{-1}b^{-1}\}
\end{array}
$ \\ \hline
$[b]$ & $6$  &
$\begin{array}{l}
\{b,b^{-1},aba^{-1},ab^{-1}a^{-1},
baba^{-1}b^{-1},bab^{-1}a^{-1}b^{-1},b^{-1}aba^{-1}b,b^{-1}ab^{-1}a^{-1}b,\\
 \ babab^{-1}a^{-1},
bab^{-1}aba^{-1},b^{-1}abab^{-1}a,b^{-1}ab^{-1}aba,abab^{-1}ab^{-1},abab^{-1}a^{-1}b,\\
 \ ab^{-1}abab^{-1},ab^{-1}aba^{-1}b,babab^{-1}a^{-1}b,bab^{-1}aba^{-1}b,b^{-1}abab^{-1}ab^{-1},\\
 \ b^{-1}ab^{-1}abab^{-1}\},
\end{array}
$ \\ \hline
$[ba]$ & $10$ &
$\begin{array}{l}
\{ba,b^{-1}a^{-1},ab,a^{-1}b^{-1},bab,b^{-1}a^{-1}b^{-1},
bab^{-1}a^{-1},b^{-1}aba^{-1},aba^{-1}b^{-1},\\
 \ ab^{-1}a^{-1}b,babab,b^{-1}ab^{-1}ab^{-1}\}
\end{array}
$ \\ \hline
$[ba^{-1}]$ & $5$ & $\begin{array}{l}
\{ba^{-1},b^{-1}a,ab^{-1},a^{-1}b,ba^{-1}b,b^{-1}ab^{-1},bab^{-1}a,b^{-1}aba,abab^{-1},ab^{-1}ab,\\
\ baba^{-1}b,b^{-1}ab^{-1}a^{-1}b^{-1}\}
\end{array}
$ \\ \hline
$[a^{2}]$ & $2$ & $\{a^{2}\}$ \\ \hline
$[1]$ & $1$ & $\{1\}$ \\ \hline

\end{tabular}
\end{center}
\caption{conjugacy classes of $I^*_{120}$}
\end{table}
\end{Lemma}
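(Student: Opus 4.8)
The plan is to determine the classes structurally via the central quotient and only then to identify the explicit representatives by direct conjugation. From the preceding table of characteristic subgroups and from Lemma~\ref{subgroupI120} I may assume that the center of $I^*_{120}$ is $Z=\langle a^2\rangle\cong\bbZ_2$, that $I^*_{120}$ is perfect, and that its only proper nontrivial quotient is $I^*_{120}/Z\cong A_5$. Writing $\pi\colon I^*_{120}\to A_5$ for this quotient, every conjugacy class of $I^*_{120}$ lies over a unique conjugacy class of $A_5$, and the full preimage $\pi^{-1}(C)$ of an $A_5$-class $C$ has $2|C|$ elements and is a union of at most two $I^*_{120}$-classes. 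The whole problem thus reduces to the five classes of $A_5$ (the identity; the $15$ double transpositions; the $20$ three-cycles; and the two classes of $12$ five-cycles) together with a splitting analysis over each.

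First I would record the orders of lifts. The Sylow $2$-subgroup of $I^*_{120}$ is $Q_8$, which has a unique involution, so $a^2$ is the unique involution of $I^*_{120}$ and no lift of a nontrivial element of $A_5$ can itself be an involution. Concretely, $a^2=b^3=(ab)^5$ with $a^4=1$ forces $a$ to have order $4$, $b$ order $6$, $b^2$ order $3$, and $ab$ (hence its conjugate $ba=a^{-1}(ab)a$) order $10$. For $g\in A_5$ of order $n$ and a lift $\tilde g$, one has $\tilde g^{\,n}\in Z$, so $\tilde g$ has order $n$ or $2n$, and the two lifts $\tilde g,\ a^2\tilde g$ have distinct orders whenever $n$ is odd. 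Counting then yields the two central elements $1,a^2$; the $30$ order-$4$ lifts of the involutions; $20$ elements of order $3$ and $20$ of order $6$ over the three-cycles; and $24$ of order $5$ together with $24$ of order $10$ over the five-cycles, for the total $120$.

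The key step is the splitting criterion. For an $A_5$-class with representative $g$, the preimage $\pi^{-1}(C_{A_5}(g))$ is a group of order $2|C_{A_5}(g)|$, and $\pi^{-1}(C)$ is a single $I^*_{120}$-class precisely when a chosen lift $\tilde g$ is noncentral in this preimage, equivalently when $\tilde g$ is conjugate to $a^2\tilde g$. For a double transposition the centralizer $\bbZ_2\oplus\bbZ_2$ lifts to the quaternion group $Q_8$, in which $\tilde g$ is noncentral, so the $30$ elements of order $4$ form a single class $[a]$. For the odd-order classes the centralizer is cyclic ($\bbZ_3$, resp.\ $\bbZ_5$) and lifts to the abelian group $\bbZ_6$, resp.\ $\bbZ_{10}$, so each such preimage splits into two classes distinguished by order: $[b^2]$ of order $3$ and $[b]$ of order $6$ (each of size $20$), and, over the two five-cycle classes, one class of order $5$ and one of order $10$ apiece. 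Since in $A_5$ a five-cycle is not conjugate to its square, this produces two order-$5$ classes and two order-$10$ classes, each of size $12$, giving exactly nine classes with the stated orders and cardinalities.

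The main obstacle is the remaining bookkeeping: matching the named representatives $ba$, $ba^{-1}$, $baba$, $baba^{-1}$ to the four classes of orders $5$ and $10$ produced above, and verifying the explicit element lists in the table. The structural argument fixes the number and sizes of the classes but not the lists, so the two order-$5$ classes, and likewise the two order-$10$ classes, must be separated by a genuine invariant — for instance the conjugacy class of the square of each representative, which distinguishes a five-cycle from its square in $A_5$. The lists themselves would then be generated mechanically by conjugating each representative by $a$ and by $b$ and reducing the resulting words using $a^2=b^3=(ab)^5$, $a^4=1$ and the centrality of $a^2$ until each orbit closes. This is a finite but lengthy computation, and arranging it so that all $120$ elements are distributed into exactly nine disjoint classes — certifying that the table is at once exhaustive and non-redundant — is where the real effort lies.
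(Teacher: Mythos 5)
Your structural argument is correct, and it is genuinely different from what the paper does: the paper states this lemma with no proof at all, and its proofs of the analogous Lemmas~\ref{conjD4n} and \ref{conjT8} proceed by brute-force enumeration of the conjugation relations $xgx^{-1}$ for the generators $x=a,b$. Your route through the central extension $1\to\langle a^2\rangle\to I^*_{120}\to A_5\to 1$ is sound in every step I checked: the preimage of an $A_5$-class is a union of at most two classes; fusion of the two lifts $\tilde g$ and $a^2\tilde g$ is equivalent to $\tilde g$ being noncentral in $\pi^{-1}(C_{A_5}(g))$; the preimages of the centralizers are $Q_8$, $\bbZ_6$ and $\bbZ_{10}$ respectively (the $Q_8$ identification using the uniqueness of the involution $a^2$); and the resulting count $2+30+20+20+12+12+12+12=120$ certifies both exhaustiveness and non-redundancy of a nine-class decomposition with exactly the orders and sizes in the table. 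This buys something the paper's implicit enumeration does not, namely an a priori certificate that the table must have this shape. What it does not deliver is the table itself: the lemma as stated lists all $120$ elements explicitly, and matching the named representatives $ba$, $ba^{-1}$, $baba$, $baba^{-1}$ to the four small classes and producing the word lists still requires exactly the mechanical conjugate-and-reduce computation you defer at the end. You are candid about this, and since that computation is routine (and is also what the paper silently relies on), I would not call it a gap in the approach --- but a complete proof of the lemma as written would have to include it, or at least verify the representatives' pairwise non-conjugacy via the squaring invariant you propose.
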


\newpage

\begin{Lemma}\label{conjT8}
The set of conjugacy classes of $T'_{8\cdot 3^q}$ is given as follows.
\begin{table}[h]
\begin{center}
\begin{tabular}{|c|l|c|l|}
\hline
  & representative & order & all elements \\ \hline
  1 & $[w^{3t}]$  & $\frac{3^q}{(3^q, 3t)}$ &  $\{w^{3t}\}$ \\ \hline
 2 & $[bw^{3t}]$  &  $\frac{4\cdot 3^q}{(3^q, 3t)}$
   & $\{bw^{3t}, b^3w^{3t}, aw^{3t}, baw^{3t}, b^2aw^{3t}, b^3aw^{3t}\}$ \\ \hline
 3 &  $[b^2w^{3t}]$  &  $\frac{2\cdot 3^q}{(3^q, 3t)}$  & $\{b^2w^{3t}\}$ \\ \hline
 4 & $[w^{3t+1}]$  & $3^q$ & $\{w^{3t+1}, b^3w^{3t+1}, baw^{3t+1}, b^2aw^{3t+1}\}$ \\ \hline
 5 & $[bw^{3t+1}]$  & $2\cdot 3^q$ & $\{bw^{3t+1}, b^2w^{3t+1}, aw^{3t+1}, b^3aw^{3t+1}\}$ \\ \hline
 6 & $[w^{3t+2}]$  &  $3^q$  & $\{w^{3t+2}, bw^{3t+2}, aw^{3t+2}, b^3aw^{3t+2}\}$ \\ \hline
 7 & $[b^2w^{3t+2}]$  &  $2\cdot 3^q$  & $\{b^2w^{3t+2}, b^3w^{3t+2}, baw^{3t+2}, b^2aw^{3t+2}\}$ \\ \hline
\end{tabular}
\end{center}
\caption{conjugacy classes of $T'_{8\cdot 3^q}$}
\end{table}
Here, $t=0,1,\ldots, 3^{q-1}-1$.
\end{Lemma}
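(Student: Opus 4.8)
The plan is to realize $T'_{8\cdot 3^q}$ as a semidirect product and then read the conjugacy classes off the resulting conjugation actions. First I would observe that the relations $wa=bw$ and $wb=abw$ say precisely that conjugation by $w$ induces the automorphism $\ph$ of the quaternion subgroup $Q_8=\langle a,b\rangle\cong D^*_{4\cdot 2}$ (Table~2) determined by $a\mapsto b\mapsto ab\mapsto a$; a direct check gives $\ph^3=\mathrm{id}$. Since $w^{3^q}=1$ and $\ph$ has order $3$, the cyclic group $\langle w\rangle\cong\bbZ_{3^q}$ acts on $Q_8$ through $\bbZ_{3^q}\to\bbZ_3$, so $T'_{8\cdot 3^q}=Q_8\rtimes_\ph\langle w\rangle$ and every element is uniquely $qw^t$ with $q\in Q_8$, $0\le t<3^q$.

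Next I would isolate the invariants. Because the abelianization is $\langle\bar w\rangle\cong\bbZ_{3^q}$ with $Q_8$ the commutator subgroup (Table~2), the class of $qw^t$ in the abelianization is $\bar w^{\,t}$, so the exponent $t\bmod 3^q$ is preserved under conjugation and is a complete invariant of the $w$-part. For fixed $t$ the two kinds of conjugators act by
\[
w\cdot(qw^t)\cdot w^{-1}=\ph(q)\,w^t,\qquad g\cdot(qw^t)\cdot g^{-1}=\bigl(g\,q\,\ph^t(g)^{-1}\bigr)w^t\quad(g\in Q_8),
\]
so the conjugacy class of $qw^t$ is $\{q'w^t:q'\in\mathcal O_t(q)\}$, where $\mathcal O_t(q)$ is the orbit of $q$ in $Q_8$ under the group generated by $\ph$ together with the $\ph^t$-twisted conjugations $q\mapsto g\,q\,\ph^t(g)^{-1}$. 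As $\ph$ has order $3$, this action depends only on $t\bmod 3$, which produces the three blocks of the table (indexed by $t=3t',\,3t'+1,\,3t'+2$).

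Then I would compute $\mathcal O_t(q)$ in each case, most transparently in quaternion notation $a=i,\ b=j,\ ab=k$. For $t\equiv 0$ the twisted conjugation is ordinary conjugation, whose $Q_8$-classes $\{1\},\{b^2\},\{a,b^2a\},\{b,b^3\},\{ba,b^3a\}$ are fused by $\ph$ into $\{1\}$, $\{b^2\}$, and the six order-$4$ elements, giving rows~1, 3, 2. For $t\equiv 1$ and $t\equiv 2$ a short calculation of $q\mapsto g\,q\,\ph^t(g)^{-1}$ shows that $Q_8$ splits into two $\ph$-stable orbits of size $4$, which I would identify with the element lists of rows~4--5 and 6--7 by exhibiting the conjugators. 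For the orders: when $t\equiv 0$ the factor $w^t$ is central, so $qw^t$ has order $\mathrm{lcm}(|q|,\,3^q/(3^q,t))$, and since $3^q$ is odd this yields rows~1--3; when $t\not\equiv 0$ the identity $(qw^t)^3=q\,\ph^t(q)\,\ph^{2t}(q)\,w^{3t}$, whose group factor $q\,\ph^t(q)\,\ph^{2t}(q)$ lies in the center $\langle b^2\rangle$, reduces to the central case and yields the orders in rows~4--7.

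I expect the twisted-conjugacy bookkeeping for $t\not\equiv 0\pmod 3$ to be the main obstacle. Unlike ordinary conjugation, the map $q\mapsto g\,q\,\ph^t(g)^{-1}$ need not fix $1$, so the two size-$4$ orbits are genuinely twisted (Reidemeister) classes and must be pinned down by explicit computation rather than by inspection. Correspondingly, the element orders in these rows hinge on the cubing identity above rather than on commuting factors, which is why, for instance, $bw^{3t+1}$ has order $2\cdot 3^q$ while $bw^{3t+2}$ has order $3^q$; keeping the sign of $q\,\ph^t(q)\,\ph^{2t}(q)$ straight across the two residue classes is the delicate point.
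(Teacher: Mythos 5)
Your proposal is correct and is essentially the paper's own argument: the paper likewise reduces to the cosets of the commutator subgroup $Q_8=\langle a,b\rangle$, uses the centrality of $w^3$ to cut the analysis down to $t\bmod 3$, and writes out the conjugation relations by $a$, $b$, $w$ in each coset --- which are exactly your $\varphi$-action and $\varphi^t$-twisted-conjugation orbits in the semidirect-product packaging. The only substantive addition is your verification of the element orders via the cubing identity $(qw^t)^3=q\,\varphi^t(q)\,\varphi^{2t}(q)\,w^{3t}$ with central norm factor, a point the paper's proof leaves implicit.
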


\begin{proof}
Since $bwb^{-1}= b (ab)^{-1} w = a^{-1} w$ and  $a w a^{-1} = a  b^{-1} w =baw$, two conjugate elements must lie in the same coset of $[T'_{8\cdot 3^q},T'_{8\cdot 3^q}]$. Note that $w^3$ commutes with all elements of $T'_{8\cdot 3^q}$. It is sufficient to consider the conjugate relations in cosets $[T'_{8\cdot 3^q},T'_{8\cdot 3^q}]$, $[T'_{8\cdot 3^q},T'_{8\cdot 3^q}]w$ and $[T'_{8\cdot 3^q},T'_{8\cdot 3^q}]w^2$.

(1) The coset $[T'_{8\cdot 3^q},T'_{8\cdot 3^q}]$. Since  $[T'_{8\cdot 3^q},T'_{8\cdot 3^q}]$ is isomorphic $D^*_{4\cdot 2}$, from Lemma~\ref{conjD4n}, we have that $b\sim b^3$, $a\sim b^2a$ and $ba\sim b^3a$ by conjugation induced by $b$ and $a$. Note that $waw^{-1}= b$ and $wbw^{-1}= ab = b^3a$.  Thus, the coset $[T'_{8\cdot 3^q},T'_{8\cdot 3^q}]$ contains three conjugacy classes: $[1]=\{1\}$, $[b]=\{ b, b^3, a, ba, b^2a, b^3a\}$ and $[b^2]=\{b^2\}$.

(2) The coset $[T'_{8\cdot 3^q},T'_{8\cdot 3^q}] w$.  Note that
$$b(b^s a w)b^{-1} = b^{s+1} a (ab)^{-1} w = b^{s+2} w,\
b(b^s w)b^{-1} = b^{s+1} (ab)^{-1} w = b^{s+2}a w,$$
$$a(b^s a w)a^{-1} = b^{-s} a^2 b^{-1} w =b^{1-s} w,\
a(b^s w)a^{-1} = a b^{s} b^{-1} w =b^{1-s} aw.
$$
These relations imply that
$$
\begin{array}{llll}
aw\sim b^2w, & baw\sim b^3w, & b^2aw\sim w, & b^3aw\sim bw; \\
w\sim b^2aw, & bw\sim b^3aw, & b^2w\sim aw, & b^3w\sim baw; \\
aw\sim bw, & baw\sim  w, & b^2aw\sim b^3w, & b^3aw\sim b^2w; \\
w\sim baw, & bw\sim aw, & b^2w\sim b^3aw, & b^3w\sim b^2aw. \\
\end{array}
$$
Moreover,
$$w(b^s a w)w^{-1} = (ab)^{s} b  w =
\left\{
\begin{array}{ll}
 b^{s+1}w & \mbox{ if } s \equiv 0 \tmod{2}, \\
 b^{s+1}aw & \mbox{ if } s \equiv 1 \tmod{2},
\end{array}
\right.$$
$$w(b^s  w)w^{-1} = (ab)^{s}   w =
\left\{
\begin{array}{ll}
 b^{s}w & \mbox{ if } s \equiv 0 \tmod{2}, \\
 b^{s-2}aw & \mbox{ if } s \equiv 1 \tmod{2},
\end{array}
\right.$$
They give no new conjugacy relations.  Thus, the coset $[T'_{8\cdot 3^q},T'_{8\cdot 3^q}] w$ contains two conjugacy classes: $[w]$ and $[bw]$.

(3) The coset $[T'_{8\cdot 3^q},T'_{8\cdot 3^q}] w^2$.  Note that
$$b(b^s a w^2)b^{-1} = b^{s+1} a a^{-1} w^2 = b^{s+1} w^2,\
b(b^s w^2)b^{-1} = b^{s+1} a^{-1} w^2 = b^{s+3}a w^2,$$
$$a(b^s a w^2)a^{-1} = b^{-s} a^2 (ab)^{-1} w^2 =b^{-s-1}a w^2,\
a(b^s w^2)a^{-1} = a b^{s} (ab)^{-1} w^2 =b^{1-s} w^2.
$$
These relations imply that
$$
\begin{array}{llll}
aw^2\sim bw^2, & baw^2\sim b^2w^2, & b^2aw^2\sim b^3w^2, & b^3aw^2\sim w^2; \\
w^2\sim b^3aw^2, & bw^2\sim aw^2, & b^2w^2\sim baw^2, & b^3w^2\sim b^2aw^2; \\
aw^2\sim b^3aw^2, & baw^2\sim  b^2aw^2, & b^2aw^2\sim baw^2, & b^3aw^2\sim aw^2; \\
w^2\sim bw^2, & bw^2\sim w^2, & b^2w^2\sim b^3w^2, & b^3w^2\sim b^2w^2.
\end{array}
$$
Thus, the coset $[T'_{8\cdot 3^q},T'_{8\cdot 3^q}] w^2$ contains two conjugacy classes: $[w^2]$ and $[b^2w^2]$.
\end{proof}

\begin{Lemma}\label{subgroupT8}
The non-trival subgroups and quotients of $T'_{8\cdot 3^q}$ are listed as follows.
\begin{table}[h]
\begin{center}
\begin{tabular}{|c|l|l|l|}
\hline
  & subgroup type & conjugacy classes  & quotient \\ \hline
 1 & $\bbZ_{3^{q-r}}$  ($0<r<q$ ) &  $\{\zhclass{w^{3^r}}\}$
    &  $T'_{8\cdot 3^r}$ \\ \hline
 2 & $\bbZ_{4\cdot 3^{q-r}}$  ($0<r\le q$)
  &  $\{\zhclass{bw^{3^r}}, \zhclass{b^3w^{3^r}}, \zhclass{aw^{3^r}}, \zhclass{baw^{3^r}}, \zhclass{b^2aw^{3^r}}, \zhclass{b^3aw^{3^r}} \}$
    &  -  \\ \hline
 3 & $\bbZ_{2\cdot 3^{q-r}}$  ($0<r\le q$)
   &  $\{\zhclass{b^2w^{3^r}}\}$ &  $(\bbZ_2\times \bbZ_2)\rtimes \bbZ_{3^r}$  \\ \hline
 4 & $\bbZ_{3^{q}}$   &  $\{\zhclass{w},  \zhclass{b^3w}, \zhclass{baw}, \zhclass{b^2aw}\}$
    &  - \\ \hline
 5 & $\bbZ_{2\cdot 3^{q}}$   &  $\{\zhclass{bw},  \zhclass{aw}, \zhclass{b^2w}, \zhclass{b^3aw}\}$
    &  - \\ \hline
 6  & $D^*_{4\cdot 2}\times \bbZ_{3^{q-r}}$ ($0<r\le q$)   &  $\{\zhclass{b,a, w^{3^r}}\}$
    &  $\bbZ_{3^{r}}$ \\ \hline
\end{tabular}
\end{center}
\caption{subgroups and quotients of $T'_{8\cdot 3^q}$}
\end{table}
\end{Lemma}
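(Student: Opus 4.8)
\emph{Proof proposal.} The plan is to exploit the semidirect-product structure $G:=T'_{8\cdot 3^q}=Q_8\rtimes\langle w\rangle$, where $Q_8=\langle a,b\rangle\cong D^*_{4\cdot 2}$ is the commutator subgroup (Table~\ref{char_subgp}) and hence the unique, normal Sylow $2$-subgroup, while $\langle w\rangle\cong\bbZ_{3^q}$ is a Sylow $3$-subgroup. The relations $wa=bw,\ wb=abw$ say that conjugation by $w$ is the order-$3$ automorphism cycling $\langle a\rangle\to\langle b\rangle\to\langle ab\rangle\to\langle a\rangle$, and Table~\ref{char_subgp} records that $w^3$ is central. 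The subgroups of $Q_8$ are exactly $1$, the centre $\langle b^2\rangle\cong\bbZ_2$, the three $\bbZ_4$'s $\langle a\rangle,\langle b\rangle,\langle ab\rangle$, and $Q_8$ itself, and I will organize the whole classification around these six possibilities.

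Given any $H\le G$, first I would set $P:=H\cap Q_8$. Since $Q_8\trianglelefteq G$ and $G/Q_8$ is a $3$-group, $P$ is the unique Sylow $2$-subgroup of $H$, and $H/P$ embeds in $\bbZ_{3^q}$, so the image of $H$ in $\langle w\rangle$ is $\langle w^{3^r}\rangle$ for a unique $0\le r\le q$. By Schur--Zassenhaus (coprime orders) $H=P\rtimes C$ with $C\cong\bbZ_{3^{q-r}}$ a $3$-subgroup. The next step is to pin down $C$. Using that $w^3$ is central (so $O_3(G)=\langle w^3\rangle$ lies in every Sylow $3$-subgroup) together with $(baw)^3=w^3$ (as $baw=awa^{-1}$), I would show all Sylow $3$-subgroups are cyclic and share the chain $\langle w^{3^s}\rangle$, $s\ge 1$; hence the only $3$-subgroup of order $3^{q-r}$ with $r\ge 1$ is the central $\langle w^{3^r}\rangle$, while for $r=0$ the choices are exactly the Sylow $3$-subgroups. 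A short normalizer computation $N_G(\langle w\rangle)=\langle b^2,w\rangle$ (forcing the number of Sylow $3$-subgroups, which divides $8$ and is $\equiv 1\tmod{3}$ and $>1$, to equal $4$) then yields precisely the four Sylow $3$-subgroups $\langle w\rangle,\langle b^3w\rangle,\langle baw\rangle,\langle b^2aw\rangle$, i.e. the four order-$3^q$ elements of class $4$ in Lemma~\ref{conjT8}.

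With this dictionary I would run through the six possibilities for $P$, using that $C$ must normalize $P$. Since $w$ permutes the three $\bbZ_4$'s of $Q_8$ cyclically while $Q_8$ acts trivially on them (inner automorphisms of $Q_8$ preserve cyclic subgroups), the only $3$-elements normalizing a $\bbZ_4$ are the central ones in $\langle w^3\rangle$; this forces $r\ge 1$ whenever $P\cong\bbZ_4$ and rules out any ``$\bbZ_4$-by-full-Sylow-$3$'' subgroup. The cases then read off as: $P=1$ gives the Sylow $3$-subgroups ($r=0$, type $4$) and the central $\langle w^{3^r}\rangle\cong\bbZ_{3^{q-r}}$ ($1\le r<q$, type $1$); $P=\langle b^2\rangle$ gives $\langle b^2\rangle\times S\cong\bbZ_{2\cdot 3^q}$ for the four Sylow $3$-subgroups $S$ ($r=0$, type $5$) and the central $\langle b^2,w^{3^r}\rangle=\langle b^2w^{3^r}\rangle\cong\bbZ_{2\cdot 3^{q-r}}$ (type $3$); $P\cong\bbZ_4$ gives the cyclic $\langle x\rangle\times\langle w^{3^r}\rangle=\langle xw^{3^r}\rangle\cong\bbZ_{4\cdot 3^{q-r}}$, where $\langle a\rangle=\langle b^2a\rangle$ etc.\ collapse the six displayed generators into one conjugacy class of three subgroups permuted by $w$ (type $2$); and $P=Q_8$ gives $Q_8\times\langle w^{3^r}\rangle\cong D^*_{4\cdot 2}\times\bbZ_{3^{q-r}}$ (type $6$), together with $G$ itself at $r=0$. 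Here coprimality of $4$ (or $2$) with $3^{q-r}$, combined with the centrality of $w^{3^r}$, is exactly what makes the $P=\bbZ_2,\bbZ_4$ subgroups cyclic.

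Finally I would settle conjugacy, normality and quotients. Types $1$, $3$, $6$ are generated by central elements (resp.\ by $Q_8$ plus a central element), so each forms a one-element conjugacy class and is normal; their quotients follow by killing the central generator(s): $G/\langle w^{3^r}\rangle\cong T'_{8\cdot 3^r}$ (the relations descend to the smaller presentation), $G/\langle b^2,w^{3^r}\rangle\cong (Q_8/\langle b^2\rangle)\rtimes(\langle w\rangle/\langle w^{3^r}\rangle)=(\bbZ_2\times\bbZ_2)\rtimes\bbZ_{3^r}$, and $G/(Q_8\times\langle w^{3^r}\rangle)\cong\bbZ_{3^r}$. Types $2$, $4$, $5$ each form a single conjugacy class of size $>1$ (their members are permuted transitively by $w$, and in type $5$ also by $a$), hence are non-normal with no quotient, matching the dashes. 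The main obstacle I anticipate is this completeness-and-bookkeeping step: proving that the normalizer/Sylow count really produces exactly four Sylow $3$-subgroups and that no subgroup with a $\bbZ_4$ Sylow $2$-part can absorb a full Sylow $3$-subgroup, so that the enumeration is exhaustive and the conjugacy-class sizes come out as stated.
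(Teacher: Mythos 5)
Your proposal is correct, but it takes a genuinely different route from the paper's. You decompose $G=Q_8\rtimes\langle w\rangle$, attach to every subgroup $H$ the pair $(P,C)=(H\cap Q_8,\ \text{a Schur--Zassenhaus complement})$, classify the possible $3$-parts by Sylow theory (four conjugate cyclic Sylow $3$-subgroups, all containing the central chain $\langle w^{3^s}\rangle$, $s\ge 1$), and then run through the six subgroups of $Q_8$, using the requirement that $C$ normalize $P$ to exclude any subgroup of shape $\bbZ_4\rtimes(\text{full Sylow }3)$. The paper instead reads the cyclic subgroups directly off the element-conjugacy table of Lemma~\ref{conjT8} (with explicit power computations such as $(b^2w)^{6s+1}=b^2w^{6s+1}$ to see which generators yield conjugate subgroups), and treats a non-cyclic $H$ by cases on its image in the abelianization $\langle w\rangle\cong\bbZ_{3^q}$: trivial image forces $H=Q_8$ since proper subgroups of $Q_8$ are cyclic; full image forces $a\in H$ and hence $H=G$; proper nontrivial image forces $H=Q_8\times\langle w^{3^r}\rangle$ because $w^{3^r}$ is central. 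Your route makes exhaustiveness structurally transparent (every subgroup is captured by its Sylow decomposition, and the cyclic and non-cyclic cases are handled uniformly), and it isolates cleanly why no order-$4\cdot 3^{q}$ subgroup occurs; the paper's route is shorter on the cyclic cases because Lemma~\ref{conjT8} has already done that work, and it produces the subgroup conjugacy classes in the table directly from conjugacy classes of elements. Both arguments ultimately rest on the same two facts: proper subgroups of $Q_8$ are cyclic, and conjugation by $w$ cyclically permutes $\langle a\rangle,\langle b\rangle,\langle ab\rangle$ while $w^3$ is central. One small bookkeeping point in your favor: you correctly observe that the six generators displayed in row $2$ of the table give only three distinct subgroups forming a single conjugacy class, which the paper leaves implicit.
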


\begin{proof}
Note that $\zhclass{w^{s}} = \zhclass{w^{(s, 3^q)}}$. The first three rows come from those in Lemma~\ref{conjT8}, where $3^r=(3^q, 3t)$.

Since both $3t+1$ and $3t+2$ are coprime to  $3^q$, we obtain that
$\zhclass{w^{3t+1}}=\zhclass{w^{3t+2}}= \zhclass{w}$. Thus, all
cyclic subgroups generated by the elements of order $3^q$ listed
in Lemma~\ref{conjT8} are conjugate. We obtain the row 4. This
fact follows from Sylow's theorems. Clearly, these subgroups are not
normal.

Consider the elements of order $2\cdot 3^q$. Note that
$(b^2w)^{6s+1} = b^2w^{6s+1}$, $(b^2w)^{6s+5} = b^2w^{6s+5}$.
Since both $6s+1$ and $6s+5$ are relatively prime to $2\cdot 3^q$,
we have $\zhclass{b^2w} =
\zhclass{b^2w^{6s+1}}=\zhclass{b^2w^{6s+5}}$. Note that
$(b^2w)^{3^q-(6s+2)} = b^2w^{-6s-3}$, $(b^2w)^{3^q-(6s+4)} =
b^2w^{-6s-4}$. Since both $3^q-(6s+2)$ and $3^q-(6s+4)$ are
relatively prime to $2\cdot 3^q$, we have $\zhclass{b^2w} =
\zhclass{b^2w^{-6s-2}}=\zhclass{b^2w^{-6s-4}}$. Since
$b^2w^{-6s-2} = (b^2w^{6s+2})^{-1}$ and  $b^2w^{-6s-4} =
(b^2w^{6s+4})^{-1}$, we obtain that $\zhclass{b^2w} =
\zhclass{b^2w^{6s+2}}=\zhclass{b^2w^{6s+4}}$. Thus, there are four
cyclic subgroups, all conjugate to each other.

Let us consider the non-cyclic subgroup $H$ of $T'_{8\cdot 3^q}$. Note that the abelianization of $T'_{8\cdot 3^q}=\zhclass{w}\cong \bbZ_{3^q}$. The abelianization $H/[H,H]$ will be a subgroup of $\zhclass{w}$. We have the following three cases:

(1) If $H/[H,H]$ is trivial then $H$ lies in the commutator $[T'_{8\cdot 3^q}, T'_{8\cdot 3^q}]=D^*_{4\cdot 2}$.  By Lemma~\ref{subgroupD4n}, any proper subgroup of $D^*_{4\cdot 2}$ is cyclic, and hence $H$ must be the commutator itself. This is the row 6 for $r=q$.

(2) If $H/[H,H]=\zhclass{w}$, then by Lemma~\ref{conjT8} and up to a conjugation, $H$ must contain $w$ or $b^2w$. If $b^2w\in H$, then $(b^2w)^2=w^2\in H$. Since $(2,3^q)=1$, we still have that $w\in H$.  Since $H$ is non-cyclic, $H$ contains an element of the form $xw^k$, where $x$ is a non-trivial element in the commutator. Hence $x= (xw^k)w^{-k}$ also lies in $H$. Note that either $x$ or $x^2$ is $b^2$. If $x=b^2$ is the unique non-trivial element in the commutator, then $H=\zhclass{b^2, w}$ would be cyclic. Thus, there must an element $x$ in $H$ with $x^2=b^2$. Thus, one of the six elements: $x, wxw^{-1}, w^2xw^{-2}, x^3, x^2wxw^{-1}, x^2w^2xw^{-2}$ must be $a$. Note that $\zhclass{a,w}=T'_{8\cdot 3^q}$. Then $H$ must be the whole group.

(3) If $H/[H,H]=\zhclass{w^{3^r}}$ with $0<r<q$, then $H=H'\times \zhclass{w^{3^r}}$ for some subgroup $H'$ of the commutator $D^*_{4\cdot 2}$ since $w^{3^r}$ lies in the center. By Lemma~\ref{subgroupD4n}, any proper subgroup of $D^*_{4\cdot 2}$ is cyclic, and hence $H'$ must be the commutator itself. This is the row 6 for $0<r<q$.
\end{proof}

\begin{Lemma}\label{conjD2q}
The set of conjugacy classes of $D'_{n\cdot 2^q}$ with $2\not| n$ is given as follows.
\begin{table}[h]
\begin{center}
\begin{tabular}{|l|l|l|}
\hline
  representative & order & all elements \\ \hline
  $[w^{2t+1}]$\  ($0\le t<2^{q-1}-1$) & $2^q$
      & $\{u^{s}w^{2t+1}\mid 0\le s < n\}$  \\   \hline
  $[w^{2t}]$   ($ 0\le t<2^{q-1}-1$)  & $\frac{2^q}{(2^q, 2t)}$  &   $\{w^{2t}\}$  \\ \hline
 $[u^sw^{2t}]$\  ($0< s \le \frac{n-1}{2},\ 0\le t<2^{q-1}$)
    & $\frac{n}{(s,n)}\times \frac{2^q}{(2^q, 2t)} $ &  $\{u^sw^{2t}, u^{n-s}w^{2t}\}$  \\ \hline
\end{tabular}
\end{center}
\caption{conjugacy classes of $D'_{n\cdot 2^q}$ with $2\not| n$}
\end{table}
\end{Lemma}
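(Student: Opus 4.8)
The plan is to exploit the split semidirect-product structure hidden in the presentation. Rewriting $uwu=w$ as $wu=u^{-1}w$ gives $wuw^{-1}=u^{-1}$, so $w$ inverts the normal cyclic subgroup $\zhclass{u}\cong\bbZ_n$; iterating yields $w^t u^s w^{-t}=u^{(-1)^t s}$, and in particular $w^2 u w^{-2}=u$, confirming that $w^2$ is central as recorded in Table~2. Because $(n,2^q)=1$, every element has a unique normal form $u^s w^t$ with $0\le s<n$ and $0\le t<2^q$, and the product is governed by $(u^a w^b)(u^c w^d)=u^{\,a+(-1)^b c}\,w^{\,b+d}$. All of the work will come down to understanding how inner automorphisms move such a normal form.

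First I would record the effect of conjugating $u^s w^t$ by the two generators. Conjugation by $w$ sends $u^s w^t\mapsto u^{-s}w^t$, i.e. $s\mapsto -s$ with $t$ fixed. For conjugation by $u$ the parity of $t$ is decisive: if $t$ is even then $w^t$ is central and $u$ fixes $u^s w^t$; if $t$ is odd then, using $uwu^{-1}=u^2w$ (a direct consequence of $uwu=w$) together with the centrality of $w^{t-1}$, one finds $u\,w^t\,u^{-1}=u^2 w^t$, so that conjugation by $u^a$ sends $u^s w^t\mapsto u^{\,s+2a}w^t$. Thus for fixed $t$ the reachable exponents of $u$ are $\{-s\}\cup\{s\}$ when $t$ is even and $\{s+2a\mid a\in\bbZ\}$ when $t$ is odd.

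The three rows of the table now fall out of this dichotomy. For even $t=2t'$ the only move is $s\mapsto -s$, giving the singleton class $\{w^{2t'}\}$ when $s\equiv 0$---these are exactly the central even powers, whence the order $\tfrac{2^q}{(2^q,2t)}$---and the two-element class $\{u^s w^{2t'},\,u^{\,n-s}w^{2t'}\}$ when $s\not\equiv 0$, which I would index by $0<s\le\frac{n-1}{2}$ to avoid repetition; here $u^s$ and the central $w^{2t'}$ commute and have coprime orders, so the order is $\tfrac{n}{(s,n)}\cdot\tfrac{2^q}{(2^q,2t)}$. For odd $t$ the shift $s\mapsto s+2a$ is the crucial one: since $n$ is odd, $2$ is invertible modulo $n$, so $s+2a$ runs through every residue and all of the $u^{s'}w^{t}$ coalesce into a single class $\{u^{s'}w^t\mid 0\le s'<n\}$ of cardinality $n$. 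Its order is computed from $(u^s w^t)^2=w^{2t}$, which for odd $t$ has order $2^{q-1}$, giving order $2^q$ irrespective of $s$.

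The one place demanding care---rather than a genuine obstacle---is the odd-$t$ collapse: I must check that $a\mapsto s+2a$ is genuinely surjective onto $\bbZ_n$, which is precisely where the hypothesis $2\nmid n$ is used, and that the resulting orbit really contains all $n$ elements rather than a proper subset. I would finish with a counting check confirming no class is missed: the $2^{q-1}$ odd-power classes account for $2^{q-1}n$ elements, the central even powers for $2^{q-1}$, and the paired even-power classes for $2^{q-1}(n-1)$, for a total of $2^{q-1}\cdot 2n=n\cdot 2^q=|D'_{n\cdot 2^q}|$.
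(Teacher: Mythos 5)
Your proposal is correct and follows essentially the same route as the paper: the paper's (one-line) proof consists precisely of the conjugation formulas $w(u^kw^l)w^{-1}=u^{-k}w^l$ and $u(u^kw^l)u^{-1}=u^{k+2}w^l$ for $l$ odd (identity for $l$ even), which are exactly the moves you derive and then organize by the parity of $t$. Your additional order computations and the final counting check are sound elaborations of what the paper leaves implicit.
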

\begin{proof}
Note that $w(u^kw^l)w^{-1} = u^{-k}w^{l}$,  and note that $u(u^kw^l)u^{-1} = u^{k+2}w^{l}$ if $l$ is odd; and $u(u^kw^l)u^{-1} = u^{k}w^{l}$ if $l$ is even. Moreover $u^kw^l = u^{k+n'}w^l$.
\end{proof}

\newpage

\begin{Lemma}\label{subgroupD2q}
The non-trival subgroups and quotients of $D'_{n\cdot 2^q}$  with $2\not| n$  are listed as follows.
\begin{table}[h]
\begin{center}
\begin{tabular}{|c|l|l|l|}
\hline
  & subgroup type & conjugacy classes  & quotient \\ \hline
 1 &  $\bbZ_{2^q}$ &  $\{\zhclass{u^sw}\mid s=0, 1, \ldots, n-1\}$  &   -  \\ \hline
 2 &  $\bbZ_{2^{q-1}}$   &  $\zhclass{w^{2}}$
   &  $\bbZ_n \rtimes \bbZ_2$ \\  \hline
 3 &  $\bbZ_{2^{q-r}}$ \ ($1<r<q$) &  $\zhclass{w^{2^{r}}}$
   &  $D'_{n\cdot 2^{r}}$ \\  \hline
 4 &  $\bbZ_{n}$
 & $\zhclass{u}$
 & $\bbZ_{2^q}$  \\  \hline
 5 &  $\bbZ_{k}$ \ ($k|n,\ 1<k< n$)
 & $\zhclass{u^k}$
 & $D'_{k\cdot 2^{q}}$  \\  \hline
 6 &  $\bbZ_{n\cdot 2^{q-r}}$ \ ($0<r< q$)
 & $\zhclass{u w^{2^r}}$
 & $\bbZ_{2^r}$  \\  \hline
 7 &  $\bbZ_{\frac{n}{k}\cdot 2^{q-1}}$ \ ($k|n,\ 1<k<n$)
 & $\zhclass{u^k w^2}$
 & $\bbZ_k\rtimes  \bbZ_2$  \\  \hline
 8 &  $\bbZ_{\frac{n}{k}\cdot 2^{q-r}}$ \ ($k|n,\ 1<k<n,\ 1<r< q$)
 & $\zhclass{u^k w^{2^r}}$
 & $D'_{k\cdot 2^r}$  \\  \hline
 9 & $D'_{\frac{n}{k}\cdot 2^q}$ \ \ ($k|n, 1<k<n$)&   $\{\zhclass{u^k, u^sw}\mid s =0,1, \ldots, n\}$ & - \\ \hline
\end{tabular}
\end{center}
\caption{subgroups and quotients of $D'_{n\cdot 2^q}$ with $2\not| n$}
\end{table}
\end{Lemma}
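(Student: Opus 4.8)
The plan is to exploit the metacyclic structure of $G:=D'_{n\cdot 2^q}$. From the relation $uwu=w$ one reads off $wuw^{-1}=u^{-1}$, so $N:=\zhclass{u}\cong\bbZ_n$ is normal, $G=N\rtimes\zhclass{w}$ with $\zhclass{w}\cong\bbZ_{2^q}$ acting by inversion; since $n$ is odd, $N\cap\zhclass{w}=1$, and $w^2$ is central because $w^2uw^{-2}=u$. Every element has the unique normal form $u^sw^t$ with $0\le s<n$, $0\le t<2^q$, and the projection $\pi\colon G\to G/N\cong\zhclass{\bar w}\cong\bbZ_{2^q}$ records $t$ modulo $2^q$. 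I will classify an arbitrary subgroup $H$ by the pair $(A,\pi(H))$, where $A:=H\cap N=\zhclass{u^k}$ for a unique divisor $k\mid n$ (so $A\cong\bbZ_{n/k}$) and $\pi(H)=\zhclass{\bar w^{2^j}}$ for a unique $0\le j\le q$. Because $\pi(H)$ is cyclic, $H$ is recovered as $H=\zhclass{u^k,\,u^sw^{2^j}}$ from any lift $u^sw^{2^j}$ of the generator of $\pi(H)$, so the whole problem reduces to analysing these two-generator subgroups and deciding which data $(k,j,s)$ actually occur.

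The decisive split is whether $j=0$ (so $H$ meets the odd cosets of $N$, i.e.\ contains some $u^sw$) or $j\ge1$ (so every element of $H$ lies over an even power of $w$, and the chosen lift $w^{2^j}$ is central). When $j\ge1$ the generators $u^k$ and $u^sw^{2^j}$ commute, so $H$ is abelian; I will show that the requirement $H\cap N=\zhclass{u^k}$ forces $k\mid s$, so that after multiplying the lift by a power of $u^k$ I may take $s=0$ and $H=\zhclass{u^k}\times\zhclass{w^{2^j}}$. Since $n/k$ is odd and $2^{q-j}$ is a power of $2$, these orders are coprime, whence $H\cong\bbZ_{\frac{n}{k}\cdot 2^{q-j}}$ is cyclic; specialising $(k,j)$ recovers rows $2$--$8$ (with $k=n$ giving the central subgroups $\zhclass{w^{2^j}}$ of rows $2$--$3$, with $j=q$ giving the subgroups $\zhclass{u^k}$ of rows $4$--$5$, and with $1\le j<q$, $k<n$ giving the mixed cyclic subgroups $\zhclass{u^kw^{2^j}}$ of rows $6$--$8$). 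When $j=0$ a lift $u^sw$ has order $2^q$ because $(u^sw)^2=w^2$, it acts on $\zhclass{u^k}$ by inversion, and $\zhclass{u^k}\cap\zhclass{u^sw}=1$; hence $H=\zhclass{u^k}\rtimes\zhclass{u^sw}\cong D'_{\frac{n}{k}\cdot 2^q}$ (row $9$), degenerating to the cyclic group $\zhclass{u^sw}\cong\bbZ_{2^q}$ of row $1$ when $k=n$, and to all of $G$ when $k=1$.

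It remains to settle normality, the quotients, and the conjugacy counts. Normality is governed entirely by conjugation by $u$, since $w$ normalises every subgroup built from $\zhclass{u^k}$ and a power of $w$ (using $wuw^{-1}=u^{-1}$). From $u(u^sw)u^{-1}=u^{s+2}w$ one sees that the subgroups meeting the odd cosets (rows $1$ and $9$) fail to be normal once $k>1$, whereas for $j\ge1$ the central lift $w^{2^j}$ together with the normal factor $\zhclass{u^k}$ makes $H$ normal; the quotients of rows $2$--$8$ are then read off by killing the generators of $H$ in the presentation, yielding $\bbZ_n\rtimes\bbZ_2$, $D'_{n\cdot 2^r}$, $\bbZ_{2^q}$, $D'_{k\cdot 2^q}$, $\bbZ_{2^r}$, $\bbZ_k\rtimes\bbZ_2$ and $D'_{k\cdot 2^r}$ respectively (the instances with $r=1$, namely rows $2$ and $7$, carry the dihedral quotients written as $\bbZ_n\rtimes\bbZ_2$ and $\bbZ_k\rtimes\bbZ_2$, since the symbol $D'_{\cdot\,2}$ is not among the admissible groups). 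For the non-normal rows I count conjugates via $u^t(u^sw)u^{-t}=u^{s+2t}w$: as $n$ (resp.\ $k$) is odd, $2$ is invertible modulo it, so $s+2t$ runs through all residues and the listed subgroups form a single conjugacy class. I expect the main obstacle to be the rigorous reduction in the case $j\ge1$---proving that $H\cap N=\zhclass{u^k}$ really forces $k\mid s$, so that no extra, genuinely skew abelian subgroups appear---together with the careful matching of the degenerate boundary values ($k\in\{1,n\}$, $j\in\{0,q\}$, $r=1$ versus $r>1$) to the precise rows and quotient names of the table.
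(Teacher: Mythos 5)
Your argument is correct, and it is organized differently from the paper's. The paper first enumerates all cyclic subgroups by running through the element conjugacy classes of Lemma~\ref{conjD2q} (each class of generators yields one row of the table), and only then treats non-cyclic subgroups by showing directly that such an $H$ must contain an element $u^sw^{2t+1}$, hence (after conjugation) $w$ itself, hence equals $\langle u^k, w\rangle$ for the minimal $k$ with $u^k\in H$. You instead classify an arbitrary $H$ by the invariants $\bigl(H\cap\langle u\rangle,\ \pi(H)\bigr)$ attached to the extension $1\to\bbZ_n\to G\to\bbZ_{2^q}\to 1$, and split on whether $\pi(H)$ contains an odd power of $\bar w$. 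Your route is more systematic: it makes explicit the point the paper leaves implicit, namely that every subgroup lying over even powers of $w$ is automatically cyclic because $n/k$ and $2^{q-j}$ are coprime, and your reduction $k\mid s$ (via $(u^sw^{2^j})^{2^{q-j}}=u^{s2^{q-j}}\in H\cap\langle u\rangle$ and $k$ odd) correctly rules out ``skew'' abelian subgroups. The paper's route leans on the already-tabulated conjugacy classes and is shorter for the cyclic rows. One small inaccuracy in your write-up: it is not true that $w$ normalises every subgroup built from $\langle u^k\rangle$ and a power of $w$ --- e.g.\ $w(u^sw)w^{-1}=u^{-s}w\notin\langle u^sw\rangle$ when $s\not\equiv 0$ $(n)$ --- so normality is not ``governed entirely'' by $u$-conjugation. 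This does not damage your conclusions: non-normality of rows $1$ and $9$ already follows from the $u$-conjugates you exhibit, normality of rows $2$--$8$ follows since they are products of a characteristic subgroup of $\langle u\rangle$ with a central subgroup, and the $w$-conjugate $\langle u^k,u^{-s}w\rangle$ lies in the $u$-orbit you compute, so the single-conjugacy-class counts stand.
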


\begin{proof}
Consider cyclic subgroups. For the conjugacy classes of the form $[w^{2t+1}]$, since $2t+1$ is coprime with the order $2^q$ of $w$, we have that $\zhclass{w^{2t+1}} = \zhclass{w}$. By Lemma~\ref{conjD2q}, we obtain the first row.

For the conjugacy classes of the form $[w^{2t}]$, we have that $\zhclass{w^{2t}} = \zhclass{u^{2^r}}$ for some $r$ with $0<r<q$. These are the second and third row.

For the conjugacy classes of the form $[u^s]$,  we have that $\zhclass{u^s} = \zhclass{u^k}$ for some $k$ with $k|n$, giving row $4$ and $5$.

For the conjugacy classes of the form $[u^sw^{2t}]$,  we have that $\zhclass{u^sw^{2t}} = \zhclass{u^k w^{2^r)}}\cong \bbZ_{\frac{n}{k}}\times \bbZ_{2^{q-r}}$ for some $k$ and $r$ with $k|n$ and $0<r<q$, which is a normal subgroup. We have the rows from $6$ to $8$, which are respectively the case $k=1$, $k>1$ and $r=1$, and $k,r>1$.

Assume that $H$ is a non-cyclic subgroup of $D'_{n\cdot 2^q}$, $H$ must contains an element of the form $u^sw^{2t+1}$. Up to a conjugation, we may have that $w^{2t+1}\in H$. It follows that $w\in H$ because $w\in \zhclass{w^{2t+1}}$. Since $H$ is not cyclic, $H$ contain some element of the form $u^s$. Let $u^k$ be the element in $H$ such that $k$ is the minimal positive integer. Thus, $H=\zhclass{u^k, w}$. This is the final row. If such a subgroup is normal, then $uwu^{-1}=u^2w$ must lies in $H$, and  hence $u^2\in H$. Note that $k$ is odd. We have that $u^{(k,2)} = u$ lies in $H$. Thus, $H=\zhclass{u, w}$ is the whole group.
\end{proof}

\subsection{Embedding $G$ in $SO(4)$}

By definition of spherical $3$-manifolds, we may regard the fundamental groups of spherical $3$-manifolds as discrete subgroups of the orientation-preserving isometry group $SO(4)$ of $S^3$.

\begin{proposition}\label{inso4} Let $G$ be the fundamental group of a spherical $3$-manifold. If $G$ is not cyclic, then the generators in the presentation of Proposition~\ref{presentation} can be identified as elements in $SO(4)$ as follows.

(1) if $G=\bbZ_m\times D^*_{4n}$,
$$
b=
\left(
  \begin{array}{cccc}
    \cos\frac{\pi}{n} & 0 & 0 & -\sin\frac{\pi}{n} \\
    0 & \cos\frac{\pi}{n} & \sin\frac{\pi}{n} & 0 \\
    0 & -\sin\frac{\pi}{n} & \cos\frac{\pi}{n} & 0 \\
    \sin\frac{\pi}{n} & 0 & 0 & \cos\frac{\pi}{n}
  \end{array}
\right),
\
a=
\left(
  \begin{array}{cc}
    R(\frac{\pi}{2}) & 0 \\
    0 & R(-\frac{\pi}{2}) \\
  \end{array}
\right)
;
$$

(2) if $G=\bbZ_m\times O^*_{48}$,
$$
b=\frac{1}{2}
\left(
\begin{array}{cccc}
 1 & -1 & -1 & -1 \\
 1 & 1 & 1 & -1 \\
 1 & -1 & 1 & 1 \\
 1 & 1 & -1 & 1
\end{array}
\right),
\
a=\frac{1}{\sqrt{2}}
\left(
\begin{array}{cccc}
 0 & 1 & 1 & 0 \\
 -1 & 0 & 0 & 1 \\
 -1 & 0 & 0 & -1 \\
 0 & -1 & 1 & 0
\end{array}
\right);
$$

(3) if $G=\bbZ_m\times I^*_{120}$,
$$
b=\frac{1}{2}
\left(
\begin{array}{cccc}
 1 & -1 & -1 & -1 \\
 1 & 1 & 1 & -1 \\
 1 & -1 & 1 & 1 \\
 1 & 1 & -1 & 1
\end{array}
\right),
\
a=\frac{1}{2}
\left(
  \begin{array}{cc}
    R(-\frac{\pi}{2}) & R(-\frac{2\pi}{5}) \\
    R(\frac{7\pi}{5}) & R(\frac{\pi}{2}) \\
  \end{array}
\right),
$$

(4) if $G=\bbZ_m\times T'_{8\cdot 3^q}$,
$$
a=
\left(
  \begin{array}{cc}
    R(-\frac{\pi}{2}) & 0 \\
    0 & R(\frac{\pi}{2}) \\
  \end{array}
\right),
\
w=\frac{1}{2}
\left(
\begin{array}{cccc}
 -1 & -1 & -1 & -1 \\
 1 & -1 & 1 & -1 \\
 1 & -1 & -1 & 1 \\
 1 & 1 & -1 & -1
\end{array}
\right)
\left(
  \begin{array}{cc}
    R(\frac{2\pi}{3^q}) & 0 \\
    0 & R(\frac{2\pi}{3^q})\\
  \end{array}
\right);$$

(5) if $G=\bbZ_m\times D'_{n\cdot 2^q}$,
$$
u=
\left(
  \begin{array}{cc}
    R(\frac{2\pi}{n}) & 0 \\
    0 & R(-\frac{2\pi}{n}) \\
  \end{array}
\right),
\
w=
\left(
  \begin{array}{cc}
    0 & R(\frac{\pi}{2^{q-1}})  \\
     R(\frac{\pi}{2^{q-1}}) & 0 \\
  \end{array}
\right).
$$

In all cases, the generator of $\bbZ_m$ is given by $v=\left(
  \begin{array}{cc}
    R(\frac{2\pi}{m}) & 0 \\
    0 & R(\frac{2\pi}{m}) \\
  \end{array}
\right)$.
Here, we use $R(\theta)$ to denote the matrix (block)
$
\left(
  \begin{array}{cc}
    \cos \theta & -\sin \theta \\
    \sin \theta & \cos \theta \\
  \end{array}
\right)$.
\end{proposition}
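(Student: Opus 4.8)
The plan is to read the proposition as a verification statement: for each non-cyclic $G$ in Table~\ref{basic_pi1} one must check three things about the displayed matrices—that they lie in $SO(4)$, that they satisfy the defining relations of the presentation in Proposition~\ref{presentation}, and that the induced representation of $G$ is \emph{faithful} and acts \emph{freely} on $S^3$. The cleanest organizing device is the quaternionic model of $SO(4)$. Identify $\mathbb{R}^4$ with the quaternions $\mathbb{H}$, so that $S^3=Sp(1)$ is the group of unit quaternions, and use the two-to-one surjection $Sp(1)\times Sp(1)\to SO(4)$ sending $(p,q)$ to the isometry $x\mapsto p\,x\,\bar q$, with kernel $\{(1,1),(-1,-1)\}$. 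Under this model a one-sided multiplication $x\mapsto xq$ (or $x\mapsto px$) by a unit quaternion acts freely on $S^3$, and the finite subgroups of $Sp(1)$ are exactly the binary polyhedral groups. The first column of each displayed matrix records the image of the quaternion $1$, which immediately pins down the matrix as a left-, right-, or two-sided multiplication.

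First I would dispose of the central factor. The generator $v=\mathrm{diag}(R(\tfrac{2\pi}{m}),R(\tfrac{2\pi}{m}))$ is scalar multiplication by $e^{2\pi i/m}$ in both invariant $2$-planes; it has order $m$, commutes with every element of $SO(4)$, and for $m>1$ has no eigenvalue $1$, so it generates a free $\mathbb{Z}_m$-action. Because $(m,|G|)=1$, an element $v^k g$ of $\mathbb{Z}_m\times G$ can be the identity only when $k=0$ and $g=1$; combined with the freeness of the $G$-action established below, this shows the whole product acts freely, so it suffices to treat $G$ itself.

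Next come the ``one-sided'' cases $D^*_{4n}$, $O^*_{48}$ and $I^*_{120}$. Reading off the first columns, one checks that both generators are right multiplications by explicit unit quaternions: for $D^*_{4n}$, right multiplication by $i$ and by $\cos\frac{\pi}{n}+\sin\frac{\pi}{n}\,k$; for $O^*_{48}$ and $I^*_{120}$, right multiplication by $\tfrac12(1+i+j+k)$ together with the second displayed generator. Orthogonality and $\det=1$ are automatic from the block form for $D^*_{4n}$ and are a short direct check for the two sporadic matrices. Since $q\mapsto(x\mapsto xq)$ is injective on $Sp(1)$ and each such map is free on $S^3$, it remains to verify that the chosen quaternions satisfy the relations $a^2=b^n=(ab)^2$, $a^4=1$ of $D^*_{4n}$ and the corresponding relations $a^2=b^3=(ab)^4$ and $a^2=b^3=(ab)^5$ of $O^*_{48}$ and $I^*_{120}$, and that they generate a subgroup of $Sp(1)$ of the correct order $|G|$. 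This is a direct quaternion (or matrix) computation, with the one caveat that right multiplication is an \emph{anti}-homomorphism, so one either compares against the opposite (inverse) relations or composes with the inversion anti-automorphism $q\mapsto q^{-1}$ to obtain an honest isomorphism $G\xrightarrow{\ \cong\ }\{R_q\}$.

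The genuinely two-sided cases $T'_{8\cdot 3^q}$ and $D'_{n\cdot 2^q}$ are where the main obstacle lies. Here the anti-block-diagonal generator $w$ is not a one-sided multiplication, so one must express it as a composite $x\mapsto p\,x\,\bar q$ and then track the conjugation relations $wa=bw$, $wb=abw$ (respectively $uwu=w$) through the twisted product structure of $SO(4)$; this is where the arithmetic is heaviest and where one must be most careful about left-versus-right conventions and signs. Two points then need genuine argument rather than routine computation. First, satisfying the relations only yields a homomorphism from the abstractly presented $G$ \emph{onto} the matrix group, so faithfulness must be established separately—most efficiently by checking that the displayed generators have exactly the orders forced by the conjugacy-class tables (Lemmas~\ref{conjD4n}, \ref{conjTo48}, \ref{conjTi120}, \ref{conjT8} and \ref{conjD2q}), which forces the image to have order $|G|$. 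Second, freeness of the action must be confirmed by showing that no non-identity element has $1$ as an eigenvalue; for the two-sided cases this does not follow formally from one-sidedness and must be verified on the conjugacy-class representatives, where the hypotheses $q\ge 1$ and $2\nmid n$ are precisely what prevent a fixed point.
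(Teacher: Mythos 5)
The paper states Proposition~\ref{inso4} with no proof at all (it passes directly to ``From now on, all groups are subgroups of $SO(4)$''), so there is nothing of the authors' to compare your argument against; your quaternionic framework --- $S^3=Sp(1)$, the double cover $Sp(1)\times Sp(1)\to SO(4)$, and reading off the quaternion pair from the first column --- is the standard and correct way to organize the verification, and your identifications in the one-sided cases do check out (in case (1) the displayed $b$ and $a$ are right multiplications by $\cos\tfrac{\pi}{n}+\sin\tfrac{\pi}{n}\,k$ and by $i$; in case (2) the displayed $b$ is right multiplication by $\tfrac12(1+i+j+k)$). You are also right to isolate the two-sided cases, faithfulness, and freeness as the points needing genuine argument.

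There are, however, two concrete problems. First, your justification for the $\bbZ_m$ factor is false: $v$ is left multiplication by $e^{2\pi i/m}$, which for $m>2$ does \emph{not} commute with every element of $SO(4)$ --- its centralizer is only the copy of $U(2)$ consisting of the maps $x\mapsto e^{i\theta}x\bar q$. What you actually need, and what is true but must be checked generator by generator, is that every displayed generator lies in this $U(2)$: those in cases (1)--(3) are right multiplications, and in cases (4)--(5) the left factor of each generator lies in the circle $\{e^{i\theta}\}$ (for instance the $w$ of case (5) is $x\mapsto \bigl(ie^{i\pi/2^{q-1}}\bigr)\,x\,(-k)$). Second, the proposal defers every computation that carries content --- the relations in the two-sided cases, the order count, and the fixed-point-freeness check are described but not performed --- and your shortcut ``the generators have the orders forced by the conjugacy tables, hence the image has order $|G|$'' is not valid as stated: a proper quotient can preserve generator orders, so one must either invoke the normal-subgroup lattice (Lemmas~\ref{subgroupD4n}, \ref{subgroupT8}, \ref{subgroupD2q}) or, more cleanly, observe that a fixed-point-free action is automatically faithful and prove freeness first via the criterion that $x\mapsto pxq$ is free iff $\mathrm{Re}(p)\neq\mathrm{Re}(\bar q)$. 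These verifications are not vacuous: carrying out even the first step in case (3) shows that the printed matrix $a$ for $I^*_{120}$, with prefactor $\tfrac12$, has columns of norm $\tfrac{1}{\sqrt2}$ and so is not orthogonal, i.e.\ the normalization in the statement must be corrected before the proposition can be verified at all.
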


From now on, all groups are subgroups of $SO(4)$. By Olum's results \cite{Olum1,Olum2}, every homomorphism $\psi: G_1\to G_2$ can be realized by a map $f: S^3/G_1\to S^3/G_2$  with $\psi=f_{\pi}$. Moreover, if $f$ and $g$ induce the same homomorphism $\psi$, then $\deg f \equiv \deg g$ mod $|G_2|$. Therefore, we have a well-defined mod $|G_2|$ integer associated with any homomorphism $\psi$ which we denote by $\dg(\psi)\in \bbZ_{|G_2|}$.

It is known that the orbit spaces of the form $S^3/\bbZ_m$ are lens spaces. The homotopy types and homeomorphism type of $3$-dimensional lens spaces depend on the $\bbZ_m$ action, which can be regarded as embeddings of $\bbZ_m$ in $SO(4)$.

\section{Lens paces covering spherical manifolds}

The generalized $3$-dimensional lens space $L(m; r_1, r_2)$ is defined to be the orbit space $S^3/\bbZ_m$, where $\bbZ_m$ is the finite cyclic subgroup of $SO(4)$, which is generated by
$$c=
\left(
  \begin{array}{cc}
    R(\frac{2\pi r_1}{m}) & 0 \\
    0 & R(\frac{2\pi r_2}{m}) \\
  \end{array}
\right).
$$
Here $r_1$ and $r_2$ are relatively prime to $m$, but $m$ is not necessarily prime. If we regard $S^3$ as  the unit sphere of complex space $\mathbb{C}^2$, the element $c$ turns out to be the complex matrix
$$c=
\left(
  \begin{array}{cc}
    e^{\frac{2\pi r_1}{m}i} & 0 \\
    0 & e^{\frac{2\pi r_2}{m}i} \\
  \end{array}
\right).
$$
By identifying the covering transformation group with the fundamental group, we call this element $c$ the {\it standard generator} of $\pi_1(L(m; r_1, r_2))$.

It is known that $L(m; r_1, r_2)$ and $L(m; r'_1, r'_2)$ are homeomorphic if and only if $\{\pm \frac{r_1}{r_2}, \pm \frac{r_2}{r_1}\}$ and $\{\pm \frac{r'_1}{r'_2}, \pm \frac{r'_2}{r'_1}\}$ are the same subset of $\bbZ_m^*$, the multiple group of invertible elements in $\bbZ_m$ (see e.g. \cite[Ch. V]{C}). Usually,  $L(m; 1, r)$ is written as $L(m; r)$. Clearly, $L(m; r_1, r_2) = L(m; 1, r')$ if $r'=r_1[r_2]^{-1}_{(m)}$. Here, $[r_2]^{-1}_{(m)}$ denotes the inverse of the multiplication of $\bbZ_{m}$.

The lens spaces that can cover $L(m; r_1, r_2)$ must be  $L(k; r_1, r_2)$ for some $k|m$.

\begin{example} Consider the group $O^*_{48}$ and $H=\langle b\rangle$ the cyclic group of order $6$ generated by $b$.
The eigenvalues of the matrix $b$ are $\{e^{\frac{\pi i}{3}}, e^{-\frac{\pi i}{3}}, e^{\frac{\pi i}{3}}, e^{-\frac{\pi i}{3}}\}$, meaning either $e^{\frac{\pi i}{3}}$ or $e^{-\frac{\pi i}{3}}$ has multiplicity two. It follows that $b$ is conjugate to
$\left(
  \begin{array}{cc}
    R(\frac{\pi}{3}) & 0 \\
    0 & R(\frac{\pi}{3}) \\
  \end{array}
\right)$.
Thus, the corresponding lens space is $L(6;1,1)$

More generally, consider the group $\mathbb Z_m\times O^*_{48}$ and $H=\langle vb \rangle$, the eigenvalues of  the matrix $vb$ are  $\{e^{\frac{(6+m)\pi i}{3}}, e^{- \frac{(6+m)\pi i}{3}}, e^{\frac{(6-m)\pi i}{3}}, e^{- \frac{(6-m)\pi i}{3}}\}$. Thus,  $vb$ is conjugate to
$\left(
  \begin{array}{cc}
    R(\frac{(6+m)\pi}{3}) & 0 \\
    0 & R(\frac{(6-m)\pi}{3}) \\
  \end{array}
\right)$.
\end{example}

The example above shows in general how one determines the lens space corresponding to a maximal cyclic subgroup. We summarize the results below.

\newpage

\begin{Lemma}\label{alllensspace}
Let $H$ be a maximal cyclic subgroup of some $G$, where $G$ is the fundamental group of some spherical $3$-manifold. Then the lens spaces of the form $S^3/H$ are listed as follows:
\begin{table}[h]
\begin{center}
\begin{tabular}{|c|l|l|l|}
  \hline
  $G$ & $H$  & $|H|$ & lens space \\ \hline
  $\bbZ_m\times D^*_{4n}$ & $\langle v, a\rangle$ &  $4m$ &  $L(4m;\ 4-m, 4+m)$\\ \hline
  $\bbZ_m\times D^*_{4n}$ & $\langle v, ba\rangle$ &  $4m$ &  $L(4m;\ 4-m, 4+m)$\\ \hline
  $\bbZ_m\times D^*_{4n}$ & $\langle v, b \rangle$ &  $2mn$ &  $L(2mn;\ 2n-m, 2n+m)$\\ \hline
  $\bbZ_m\times O^*_{48}$ & $\langle v, b \rangle$ & $6m$ & $L(6m;\ 6-m, 6+m)$ \\ \hline
  $\bbZ_m\times O^*_{48}$ & $\langle v, ba \rangle$ & $8m$ & $L(8m;\ 8-m, 8+m)$ \\ \hline
  $\bbZ_m\times O^*_{48}$ & $\langle v, ba^{-1} \rangle$ & $8m$ & $L(8m;\ 8-3m, 8+3m)$ \\ \hline
  $\bbZ_m\times I^*_{120}$ &  $\langle v, a\rangle$ &  $4m$ &  $L(4m;\ 4-m, 4+m)$ \\ \hline
  $\bbZ_m\times I^*_{120}$ &  $\langle v, b\rangle$ &  $6m$ &  $L(6m;\ 6-m, 6+m)$ \\ \hline
  $\bbZ_m\times I^*_{120}$ &  $\langle v, ba\rangle$ &  $10m$ &  $L(10m;\ m-10, m+10)$ \\ \hline
  $\bbZ_m\times I^*_{120}$ &  $\langle v, baba^{-1}\rangle$ &  $10m$ &  $L(10m;\ 3m-10, 3m+10)$ \\ \hline
$\bbZ_m\times T'_{8\cdot 3^q}$ & $\langle v, bw\rangle,\ \langle v, b^2w^2\rangle$ & $2m3^q$ &
$L(2m3^q;\ 2m+2\cdot 3^q-m3^{q-1},\ 2m+2\cdot 3^q+m3^{q-1})$ \\ \hline
$\bbZ_m\times T'_{8\cdot 3^q}$ & $\langle v, a, w^3\rangle$ & $4m3^{q-1}$ &
  $L(4m3^{q-1};\ 4m+4\cdot3^{q-1} - m3^{q-1},\ 4m+4\cdot3^{q-1} + m3^{q-1})$ \\ \hline
  $\bbZ_m\times D'_{n\cdot 2^q}$ & $\langle v, w\rangle$ & $m 2^q$
   & $L(m2^q;\ m+ 2^{q-1},\ m+ 2^{q-1}+ m 2^{q-1})$ \\ \hline
 $\bbZ_m\times D'_{n\cdot 2^q}$ &  $\langle v, u, w^2 \rangle$ & $mn 2^{q-1}$ &  $L(mn2^{q-1};\  n2^{q-1}+mn+m2^{q-1},\ n2^{q-1}+mn - m2^{q-1})$  \\ \hline
\end{tabular}
\end{center}
\caption{lens spaces that cover $S^3/G$}
\end{table}
\end{Lemma}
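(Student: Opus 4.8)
The plan is to read off each lens space from the eigenvalues of a generator of the corresponding cyclic subgroup, using the explicit $SO(4)$ matrices of Proposition~\ref{inso4}. The underlying principle is elementary: if $H=\langle g\rangle\subset SO(4)$ is cyclic of order $N$ and acts freely on $S^3$, then the four eigenvalues of $g$ (a finite-order unitary matrix) split into two complex-conjugate pairs $e^{\pm 2\pi i r_1/N},\,e^{\pm 2\pi i r_2/N}$, and once these are known $g$ is conjugate in $SO(4)$ to $\mathrm{diag}(R(2\pi r_1/N),R(2\pi r_2/N))$, which is exactly the standard generator of $\pi_1(L(N;r_1,r_2))$. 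Hence $S^3/H=L(N;r_1,r_2)$. Freeness of the $G$-action forces $\gcd(r_1,N)=\gcd(r_2,N)=1$, so the output is a genuine lens space. Thus the entire lemma reduces to one eigenvalue computation per row.

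First I would enumerate, for each group of Table~\ref{basic_pi1}, the conjugacy classes of \emph{maximal} cyclic subgroups, reading them directly from Lemmas~\ref{subgroupD4n}, \ref{subgroupO48}, \ref{subgroupI120}, \ref{subgroupT8} and \ref{subgroupD2q}: for example $\langle a\rangle,\langle ba\rangle$ (order $4$) and $\langle b\rangle$ (order $2n$) for $D^*_{4n}$, and similarly for the remaining types. Adjoining the central factor $\bbZ_m=\langle v\rangle$ and using $(m,|G|)=1$, each subgroup becomes $\langle v,g\rangle=\langle vg\rangle$, cyclic of order $N=m\cdot|g|$. For each row I would write $vg$ as an explicit product of the matrices of Proposition~\ref{inso4}, compute its characteristic polynomial, and factor it into the two conjugate pairs above; the computation in the Example preceding the lemma (for $\langle b\rangle$ and $\langle vb\rangle$ in $O^*_{48}$) is the prototype.

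The computations are organized by the interaction of the central element $v$ with the chosen generator. Since $v=\mathrm{diag}(R(2\pi/m),R(2\pi/m))$ commutes with $g$ and has the two $2$-dimensional eigenspaces $V_\pm$ for $e^{\pm 2\pi i/m}$, the matrix $g$ preserves each $V_\pm$, and the eigenvalues of $vg$ are obtained from those of $g$ by multiplying the $V_+$-eigenvalues by $e^{2\pi i/m}$ and the $V_-$-eigenvalues by $e^{-2\pi i/m}$. For the ``equiangular'' generators---those, such as $a$, $ba$, the elements $b,ba,baba^{-1}$ in $O^*_{48}$ and $I^*_{120}$, and $u$, that are rotations through a single repeated angle with eigenvalues $e^{\pm i\theta}$---this at once gives eigenvalues $e^{\pm i(\theta+2\pi/m)},\,e^{\pm i(\theta-2\pi/m)}$, which is the source of the uniform shape of the parameters (e.g.\ $2n\pm m$, $6\pm m$, $8\pm m$, $8\pm 3m$, $m\pm 10$, $3m\pm 10$). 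The delicate point is deciding, on each eigenline of $g$, whether $v$ contributes $+2\pi/m$ or $-2\pi/m$; this relative chirality is exactly what produces two genuinely distinct parameters rather than a single repeated one.

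The main obstacle is the generator $w$, which in the $D'_{n\cdot 2^q}$ family is the off-diagonal block matrix and in the $T'_{8\cdot 3^q}$ family is a product of an integral matrix with a rotation block. In both cases $w$ is \emph{not} equiangular---already $S^3/\langle w\rangle$ is a nontrivial lens space---so its eigenvalues must be computed honestly. For $D'_{n\cdot 2^q}$ one is helped by the fact that $w^2$ is equiangular, so $w$ can be diagonalized and then the $v$-twist applied. For $T'_{8\cdot 3^q}$ the generators $vbw$, $vb^2w^2$ and $vaw^3$ require multiplying out the defining matrices, computing the characteristic polynomial of a $4\times 4$ matrix of order $2m3^q$ or $4m3^{q-1}$, and reducing the angle $2\pi/3^q$ contributed by $w$ against that order; this is the heaviest bookkeeping in the argument. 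The remaining rows ($D^*_{4n}$, $O^*_{48}$, $I^*_{120}$, and the cyclic subgroups $\langle u\rangle$, $\langle u,w^2\rangle$ of $D'_{n\cdot 2^q}$) are routine once the equiangular mechanism is in place. In every row I would conclude by verifying $\gcd(r_1,N)=\gcd(r_2,N)=1$, both as a consistency check against the free action of $G$ and to confirm that $(N;r_1,r_2)$ genuinely names the lens space $S^3/H$.
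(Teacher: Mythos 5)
Your proposal is correct and follows essentially the same route as the paper: the paper's proof consists precisely of identifying each maximal cyclic subgroup with $\langle vg\rangle\subset SO(4)$ via Proposition~\ref{inso4}, computing the eigenvalues of the generating matrix, and reading off the lens space, with the $O^*_{48}$ example serving as the prototype exactly as you use it. Your write-up is in fact more careful than the paper's one-line ``straightforward computation,'' notably in isolating the relative-chirality issue (which pairing of eigenvalues sits on which invariant $2$-plane) that the eigenvalue multiset alone does not resolve.
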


\begin{proof} By Lemma~\ref{presentation}, the generator of a cyclic group can be written as a matrix in $SO(4)$. We can compute its eigenvalues for any given matrix. Note that the corresponding lens spaces are completely determined by the eigenvalues of the matrices corresponding to the generators. This proof is a straightforward computation.
\end{proof}

\begin{Lemma}\label{deglenspace}
Let $f: L(m_1; r_{11}, r_{12})\to L(m_2; r_{21}, r_{22})$ be a map between lens spaces,
inducing a homomorphism $f_\pi$ on fundamental group given by $f_\pi(c_1) = c_2^l$, where $c_1$ and $c_2$ are respectively standard generators of $\pi_1(L(m_1; r_{11}, r_{12}))$ and $\pi_1(L(m_2; r_{21}, r_{22}))$. Then the degree of $f$ is
$$\deg (f) = l^2 r_{21}r_{22}\frac{m_1}{m_2}[r_{11}]^{-1}_{(m_1)}[r_{12}]^{-1}_{(m_1)}+km_2 \quad \text{for some }k\in \mathbb Z, $$
where $[*]^{-1}_{(m_1)}$ denotes the inverse of multiplication of $\bbZ_{m_1}$.
\end{Lemma}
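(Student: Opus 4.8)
The plan is to reduce the computation to the universal covers and to a single conveniently chosen representative. By Olum's theorem (quoted above), the residue $\deg f \bmod m_2$ depends only on $f_\pi$, so it suffices to produce \emph{one} map $g\colon L(m_1;r_{11},r_{12})\to L(m_2;r_{21},r_{22})$ with $g_\pi=f_\pi$ and to compute $\deg g$ exactly; reading the answer modulo $m_2$ then yields $\deg f$ together with the $km_2$ ambiguity in the statement. Note first that for $f_\pi(c_1)=c_2^l$ to define a homomorphism $\bbZ_{m_1}\to\bbZ_{m_2}$ we need $c_2^{lm_1}=1$, i.e. $m_2\mid lm_1$; set $N:=lm_1/m_2\in\bbZ$.

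The first step is the relation between $\deg g$ and the degree of an equivariant lift. Write $p_i\colon S^3\to L(m_i;\cdots)$ for the covering projections; these are orientation-preserving $m_i$-sheeted covers (the deck groups lie in $SO(4)$), so $(p_i)_*[S^3]=m_i[L(m_i;\cdots)]$. Any lift $\tilde g\colon S^3\to S^3$ of $g$ satisfies $p_2\circ\tilde g=g\circ p_1$; evaluating on fundamental classes gives $m_2\,\deg\tilde g=m_1\,\deg g$, hence $\deg g=\tfrac{m_2}{m_1}\deg\tilde g$. (As a check, the identity map gives $\deg g=1$.)

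Next I would build $\tilde g$ explicitly. Regard $S^3=\{(z_1,z_2):|z_1|^2+|z_2|^2=1\}=S^1\ast S^1$, so that $c_i=\mathrm{diag}(e^{2\pi i r_{i1}/m_i},e^{2\pi i r_{i2}/m_i})$ merely rotates the two circle factors and therefore respects the join. Take $\tilde g=\phi_1\ast\phi_2$ with $\phi_j(z)=z^{d_j}$ on the $j$-th circle. Then $\deg\tilde g=d_1d_2$, since the degree of a join is the product of the degrees, and $\tilde g$ satisfies $\tilde g(c_1 z)=c_2^l\,\tilde g(z)$ (so that it descends to $g$ with $g_\pi(c_1)=c_2^l$) exactly when
$$d_1 r_{11}\equiv N r_{21}\pmod{m_1},\qquad d_2 r_{12}\equiv N r_{22}\pmod{m_1}.$$
Because $(r_{1j},m_1)=1$, these solve uniquely mod $m_1$ as $d_1\equiv N r_{21}[r_{11}]^{-1}_{(m_1)}$ and $d_2\equiv N r_{22}[r_{12}]^{-1}_{(m_1)}$. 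Hence $d_1d_2\equiv N^2 r_{21}r_{22}[r_{11}]^{-1}_{(m_1)}[r_{12}]^{-1}_{(m_1)}\pmod{m_1}$, and multiplying by $m_2/m_1$ (using $N=lm_1/m_2$, so that $\tfrac{m_2}{m_1}N^2=\tfrac{l^2 m_1}{m_2}$) gives
$$\deg g=\frac{m_2}{m_1}d_1d_2=l^2 r_{21}r_{22}\frac{m_1}{m_2}[r_{11}]^{-1}_{(m_1)}[r_{12}]^{-1}_{(m_1)}+(\text{multiple of }m_2),$$
where the leading term is read as the integer $\tfrac{m_2}{m_1}d_1d_2$. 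Since $\deg f\equiv\deg g\pmod{m_2}$, this is the asserted formula.

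The main obstacle is the two geometric inputs rather than the arithmetic: establishing $\deg g=\tfrac{m_2}{m_1}\deg\tilde g$ with the correct (positive) orientation signs for the covering projections, and justifying $\deg(\phi_1\ast\phi_2)=d_1d_2$ together with the verification that the join map is genuinely $f_\pi$-equivariant and induces $g_\pi=f_\pi$. Once these are in place the congruence bookkeeping is routine. I would keep in reserve the alternative cohomological route — using a generator $\alpha\in H^1(L;\bbZ_m)$, its Bockstein $\beta$, and the identity $\alpha\cup\beta=\pm(r_1 r_2)^{-1}\mu$ on the top class $\mu$, together with $f^*\alpha_2=l\alpha_1$ and naturality $f^*\beta_2=l\beta_1$ — which reproduces the factor $l^2 r_{21}r_{22}[r_{11}]^{-1}_{(m_1)}[r_{12}]^{-1}_{(m_1)}$ directly, should the orientation comparison in the join argument prove delicate.
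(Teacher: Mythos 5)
Your proof is correct and follows essentially the same route as the paper: an explicit equivariant map $(z_1,z_2)\mapsto(z_1^{d_1},z_2^{d_2})$ on $S^3$, the covering relation $m_2\deg\tilde g=m_1\deg g$, and Olum's theorem to pass from the chosen representative back to $f$. The only real difference is that the paper first treats the surjective case ($m_2\mid m_1$) and then handles general $l$ by lifting $f$ to the intermediate cover $L(m_2/(l,m_2);r_{21},r_{22})$ and multiplying by the covering degree $(l,m_2)$, whereas your use of the integer $N=lm_1/m_2$ lets you solve the congruences for $d_1,d_2$ directly and absorbs both cases into a single computation.
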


\begin{proof}
Let us consider the case when $f_\pi$ is surjective.  Then we have that $m_2|m_1$. We construct a map $\tilde g: S^3\to S^3$ by $\tilde g
\left(
  \begin{array}{c}
    r_1e^{\theta_1 i}\\
    r_2e^{\theta_2 i}
  \end{array}
\right)
=
\left(
  \begin{array}{c}
    r_1e^{k_1\theta_1 i}\\
    r_2e^{k_2\theta_2 i}
  \end{array}
\right).
$
Note that
$$
\left(
  \begin{array}{c}
    r_1e^{\theta_1 i}\\
    r_2e^{\theta_2 i}
  \end{array}
\right)
\stackrel{c_1}{\mapsto}
\left(
  \begin{array}{c}
    r_1 e^{\theta_1 i+ \frac{2\pi r_{11}}{m_1}i}\\
    r_2 e^{\theta_2 i+\frac{2\pi r_{12}}{m_1}i}
  \end{array}
\right)
\stackrel{\tilde g}{\mapsto}
\left(
  \begin{array}{c}
    r_1 e^{k_1\theta_1 i+ \frac{2\pi k_1r_{11}}{m_1}i}\\
    r_2 e^{k_2\theta_2 i+ \frac{2\pi k_2r_{12}}{m_1}i}
  \end{array}
\right),
$$
$$
\left(
  \begin{array}{c}
    r_1e^{\theta_1 i}\\
    r_2e^{\theta_2 i}
  \end{array}
\right)
\stackrel{\tilde g}{\mapsto}
\left(
  \begin{array}{c}
    r_1e^{k_1\theta_1 i}\\
    r_2e^{k_2\theta_2 i}
  \end{array}
\right)
\stackrel{c_2^l}{\mapsto}
\left(
  \begin{array}{c}
    r_1 e^{k_1\theta_1 i + \frac{2\pi r_{21}l}{m_2}i} \\
    r_2 e^{k_2\theta_2 i + \frac{2\pi r_{22}l}{m_2}i} \\
  \end{array}
\right).
$$
In order to obtain that $\tilde g c_1 = c_2^l \tilde g$, it is sufficient to assume that
$\frac{r_{11}k_1}{m_1}- \frac{r_{21}l}{m_2}$ and $\frac{r_{12}k_2}{m_1}-\frac{r_{22}l}{m_2}$ are both integers. Thus, we may pick $k_1=lr_{21}\frac{m_1}{m_2}[r_{11}]^{-1}_{(m_1)}$  and $k_2=lr_{22}\frac{m_1}{m_2}[r_{12}]^{-1}_{(m_1)}$. It is obvious that $\tilde g$ has degree $$\deg (\tilde g) = k_1k_2 = l^2 r_{21}r_{22}(\frac{m_1}{m_2})^2[r_{11}]^{-1}_{(m_1)}[r_{12}]^{-1}_{(m_1)}. $$

The equality $\tilde g c_1 = c_2^l \tilde g$ also implies that $\tilde g$ induces a  map $g: L(m_1; r_{11}, r_{12})\to L(m_2; r_{21}, r_{22})$. Since $\tilde g$ is a lifting of $g$ with respect to the universal covering $S^3$ of both $L(m_1; r_{11}, r_{12})$ and $L(m_2; r_{21}, r_{22})$, we have that $\deg(\tilde g) m_2 = m_1\deg(g)$. Here $m_1$ and $m_2$ are respectively the degrees of the covering maps $S^3\to L(m_1; r_{11}, r_{12})$ and $S^3\to L(m_2; r_{21}, r_{22})$. Thus,
$$\deg (g) =  l^2 r_{21}r_{22}\frac{m_1}{m_2}[r_{11}]^{-1}_{(m_1)}[r_{12}]^{-1}_{(m_1)}. $$

Note that $f$ and $g$ have the same induced homomorphism on $\pi_1$, which is given by $c_1\mapsto c_2^l$. By Olum results \cite{Olum1,Olum2}, we have that $\deg(f) \equiv \deg(g) \tmod{m_2}$. Thus, we give the proof of $\deg(f)$ when $f_\pi$ is surjective.

Now, we turn to the general case. The image $\im f_\pi$ of $f_\pi$ is the cyclic group generated by $c_2^{(l,m_2)}$, which is isomorphic to $\bbZ_{\frac{m_2}{(l,m_2)}}$. Note that the covering space of $L(m_2, r_{21}, r_{22})$ corresponding to the group $\langle c_2^{(l,m_2)}\rangle$ is $L(\frac{m_2}{(l,m_2)}, r_{21}, r_{22})$. There must be a lifting $\tilde f$ of $f$ satisfying the following commutative diagram:
$$
\xymatrix{
       &  L(\frac{m_2}{(l,m_2)}; r_{21}, r_{22}) \ar[d]^{p}  \\
 L(m_1; r_{11}, r_{12}) \ar[ur]^{\tilde f} \ar[r]^{f} &  L(m_2; r_{21}, r_{22})
}
$$
Note that $c_2^l = (c_2^{(l,m_2)})^{\frac{l}{(l,m_2)}}$. Apply our previous argument to $\tilde f$ which induces a surjection on $\pi_1$, we have that
$$\deg(\tilde f)
= (\frac{l}{(l,m_2)})^2 r_{21}r_{22}\frac{m_1}{m_2/(l,m_2)}[r_{11}]^{-1}_{(m_1)}[r_{12}]^{-1}_{(m_1)}
=
\frac{l^2}{(l,m_2)} r_{21}r_{22}\frac{m_1}{m_2}[r_{11}]^{-1}_{(m_1)}[r_{12}]^{-1}_{(m_1)}.
$$
Since $\deg(f) = \deg(p)\deg(\tilde f)= (l, m_2) \deg(\tilde f)$, we obtain our conclusion.
\end{proof}

It should be mentioned that the existence of the homomorphism $c_1\to c_2^l$ forces that $m_1 l\equiv 0 \tmod{m_2}$. In other word, $\frac{lm_1}{m_2}$ must be an integer.

\begin{theorem}\label{degreebetweenlensspaces}
The set of all degrees $D(L(m_1; r_{11}, r_{12}), L(m_2; r_{21}, r_{22}))$ of maps from $L(m_1; r_{11}, r_{12})$ to $L(m_2; r_{21}, r_{22})$ is:
$$
\{ j^2 r_{21}r_{22}\frac{m_1m_2}{(m_1,m_2)^2}[r_{11}]^{-1}_{(m_1)}[r_{12}]^{-1}_{(m_1)} +km_2 \mid k\in \mathbb Z, j=0,1,\ldots, (m_1, m_2)-1\}. $$
\end{theorem}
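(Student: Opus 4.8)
The plan is to reduce the problem to arithmetic: use Olum's realization theorem to express $D$ as a union of residue cosets modulo $m_2$, compute each coset via Lemma~\ref{deglenspace}, and then determine the period of the resulting quadratic expression in the parameter $l$ labelling the homomorphism on $\pi_1$.

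First I would parametrize the maps. Since $\pi_1(L(m_i;r_{i1},r_{i2}))=\bbZ_{m_i}=\zhclass{c_i}$, every homomorphism $\psi\colon \bbZ_{m_1}\to\bbZ_{m_2}$ has the form $c_1\mapsto c_2^{l}$, and is well defined exactly when $c_2^{lm_1}=1$, i.e. when $m_2\mid lm_1$. Setting $d=(m_1,m_2)$ and writing $m_1=dm_1'$, $m_2=dm_2'$ with $(m_1',m_2')=1$, this condition is equivalent to $\frac{m_2}{d}\mid l$. By Olum's theorem each such $\psi$ is realized by a map, and by Lemma~\ref{deglenspace} the degrees inducing $\psi$ form a single coset
$$
l^{2} r_{21}r_{22}\frac{m_1}{m_2}[r_{11}]^{-1}_{(m_1)}[r_{12}]^{-1}_{(m_1)} + m_2\bbZ .
$$
Thus $D(L(m_1;r_{11},r_{12}),L(m_2;r_{21},r_{22}))$ is the union of these cosets over all $l$ divisible by $\frac{m_2}{d}$.

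Next I would substitute $l=\frac{m_2}{d}\,j$, $j\in\bbZ$. Writing $A=r_{21}r_{22}[r_{11}]^{-1}_{(m_1)}[r_{12}]^{-1}_{(m_1)}$ and using
$$
l^{2}\,\frac{m_1}{m_2}=\frac{m_2^{2}}{d^{2}}\,j^{2}\,\frac{m_1}{m_2}=j^{2}\,\frac{m_1m_2}{d^{2}},
$$
each coset representative becomes $j^{2}\frac{m_1m_2}{(m_1,m_2)^{2}}A$, which is precisely the expression in the statement. (It is an integer because $d^{2}\mid m_1m_2$, and it is well defined modulo $m_2$ because changing a mod-$m_1$ inverse by a multiple of $m_1$ alters $A$ by a multiple of $m_1$, hence alters $j^2\frac{m_1m_2}{d^2}A$ by a multiple of $\frac{m_1^2m_2}{d^2}=m_2(m_1/d)^2$.)

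The decisive step is to show this union collapses to $j\in\{0,1,\dots,d-1\}$, i.e. that the coset depends only on $j$ modulo $d$. Expanding,
$$
(j+d)^{2}\frac{m_1m_2}{d^{2}}A
= j^{2}\frac{m_1m_2}{d^{2}}A + 2j\,\frac{m_1}{d}\,m_2\,A + m_1m_2A ,
$$
and since $\frac{m_1}{d}\in\bbZ$ both correction terms are multiples of $m_2$; hence replacing $j$ by $j+d$ leaves the coset unchanged. Therefore the union over all $j\in\bbZ$ equals the union over $j=0,1,\dots,(m_1,m_2)-1$, which is the asserted set. (This $d$-periodicity also absorbs the ambiguity $l\mapsto l+m_2$, which corresponds to $j\mapsto j+d$, so distinct $l$ representing the same $\psi$ cause no inconsistency.) I expect the only delicate points to be these bookkeeping verifications---the admissibility condition on $l$ and the well-definedness modulo $m_2$---while all the topological input is furnished by Olum's theorem and Lemma~\ref{deglenspace}.
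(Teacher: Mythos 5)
Your argument is correct and follows essentially the same route as the paper: parametrize homomorphisms by $c_1\mapsto c_2^{l}$, note that $m_1l\equiv 0\ (m_2)$ forces $l=j\frac{m_2}{(m_1,m_2)}$, and then apply Lemma~\ref{deglenspace}. You simply supply more of the bookkeeping (the $d$-periodicity in $j$ and the well-definedness modulo $m_2$) that the paper leaves implicit.
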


\begin{proof}
For any map $f$ between these two lens spaces, the homomorphism $f_\pi$ on fundamental groups is given by $c_1\to c_2^l$. Note that $m_1l \equiv 0 \tmod{m_2}$. It follows that $l=j \frac{m_2}{(m_1, m_2)}$ for   $j=0,1,\ldots, (m_1, m_2)-1$. This theorem holds by Lemma~\ref{deglenspace}.
\end{proof}

\begin{remark}
When we write a lens space $L(m; r_1, r_2)$, its orientation is specified, inheriting the natural orientation of $S^3$. Thus, by above theorem $D(L(3; 1,1), L(3; 1,2)) = \{3k,-1+3k \mid k\in \bbZ\}$, although $L(3; 1,1)$ and $L(3; 1,2)$ are homeomorphic.
\end{remark}

\section{Mapping degrees}

In this section, we determine all mapping degrees between spherical $3$-manifolds. Suppose $f:M\to N$ is a map between two such manifolds with $G_1=\pi_1(M)$ and $G_2=\pi_1(N)$. If $G_1=\mathbb Z_m$ then the degree is already given in the last section. For the other cases of $G_1$, we only need to consider the case when the induced homomorphism $f_{\pi}$ is surjective.

\begin{lemma}\label{degfromself}
Let $\psi: G_1\to G_2$ be a homomorphism and $G_*$ the quotient
$\frac{G_1}{ker(\psi)}.$   Then
$$\dg(\psi) = \dg(\xi)\dg(\varphi)\frac{|G_2|}{|G_*|},$$
where $\varphi: G_1 \to G_*$ is the projection and $\xi: G_*\to \im \psi$
is the isomorphism given by $\varphi(x)\mapsto \psi(x)$.
\end{lemma}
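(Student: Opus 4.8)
The plan is to factor $\psi$ as a composition of three homomorphisms whose degrees are individually understood, and then invoke the multiplicativity of mapping degree under composition together with Olum's identification of $\dg$. I would write $\psi = \iota \circ \xi \circ \varphi$, where $\varphi: G_1 \to G_*$ is the projection, $\xi: G_* \to \im\psi$ is the given isomorphism, and $\iota: \im\psi \hookrightarrow G_2$ is the inclusion. Since $\im\psi$ is a subgroup of $G_2 \leq SO(4)$, it acts freely on $S^3$, so $S^3/\im\psi$ is again a spherical $3$-manifold and each of the three homomorphisms is realized by a map of spherical $3$-manifolds by \cite{Olum1,Olum2}. Composing the realizing maps $f_\varphi, f_\xi, f_\iota$ realizes $\psi$, and multiplicativity of degree gives $\deg(f_\iota \circ f_\xi \circ f_\varphi) = \deg(f_\iota)\deg(f_\xi)\deg(f_\varphi)$.

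Next I would compute each factor. The map $f_\iota$ realizing the inclusion $\iota$ is the covering projection $S^3/\im\psi \to S^3/G_2$, an $[G_2:\im\psi]$-sheeted covering; with the orientations inherited from $S^3$ it is orientation-preserving, so its degree is exactly $[G_2:\im\psi] = |G_2|/|\im\psi| = |G_2|/|G_*|$, using $|\im\psi| = |G_*|$. The remaining two factors contribute their reduced degrees: $\deg(f_\xi) \equiv \dg(\xi)$ and $\deg(f_\varphi) \equiv \dg(\varphi)$, both modulo $|G_*| = |\im\psi|$, while $\deg(f_\iota\circ f_\xi \circ f_\varphi) \equiv \dg(\psi)$ modulo $|G_2|$. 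Assembling these yields $\dg(\psi) \equiv \frac{|G_2|}{|G_*|}\deg(f_\xi)\deg(f_\varphi) \pmod{|G_2|}$.

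The final step is a congruence-lifting argument. Since $\deg(f_\xi)\deg(f_\varphi) \equiv \dg(\xi)\dg(\varphi) \pmod{|G_*|}$, multiplying through by the integer $|G_2|/|G_*|$ upgrades this to a congruence modulo $|G_2|$, giving $\dg(\psi) = \dg(\xi)\dg(\varphi)\frac{|G_2|}{|G_*|}$ in $\bbZ_{|G_2|}$, as claimed.

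The step I expect to require the most care is precisely this modular bookkeeping: the three reduced degrees live in different cyclic groups ($\dg(\psi)$ modulo $|G_2|$, but $\dg(\xi)$ and $\dg(\varphi)$ only modulo $|G_*|$), so I must verify that the product $\dg(\xi)\dg(\varphi)$, a priori defined only modulo $|G_*|$, becomes well-defined modulo $|G_2|$ after multiplication by $|G_2|/|G_*|$, and that this matches the genuine integer degree of the composite. The identification of $\deg(f_\iota)$ with the covering index — including its sign from the orientations — is the other point to check carefully, since it is exactly what produces the factor $|G_2|/|G_*|$.
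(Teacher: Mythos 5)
Your proposal is correct and follows essentially the same route as the paper: factor $\psi$ as the projection $\varphi$, the isomorphism $\xi$, and the inclusion of $\im\psi$ into $G_2$, realize each by a map, and use that the last map is a covering of degree $|G_2|/|G_*|$. Your additional care with the modular bookkeeping (the product $\dg(\xi)\dg(\varphi)$ is only defined mod $|G_*|$, but becomes well-defined mod $|G_2|$ after multiplying by $|G_2|/|G_*|$) is a detail the paper leaves implicit, and it is handled correctly.
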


\begin{proof}
Let us consider the following three maps:\\
i) $ f: S^{2k+1}/G_1\to S^{2k+1}/G_*$ be a map such that $f_\pi=\varphi$\\
ii) $h: S^{2k+1}/G_* \to S^{2k+1}/\im \psi$  such that $h_\pi=\xi$\\
iii) $p_2:  S^{2k+1}/\im \psi \to  S^{2k+1}/G_2$.

 It is easy to see that the homomorphism $\psi$ is $p_2\circ \xi\circ \varphi$, the composite of the three homomorphisms. Since the degree of  the covering of $p_2$ is  $\frac{|G_2|}{|G_*|}$
the  formula follows. 
\end{proof}

By using Lemma \ref{degfromself}, since the degree of endomorphisms are almost listed in \cite{Du}, we can compute the degree $\dg(\psi)$, where $\psi: G_1\to G_2$ is a surjection and $G_2$ is a subgroup of $G_1$.

\begin{Lemma}\label{selfdegreeD4n}
Each endomorphism of $D^*_{4n}$ is conjugate to a composition of some endomorphisms in following table.
\begin{table}[h]
\begin{center}
\begin{tabular}{|c|lr|l|l|l|l|l|l|}
\hline
$\psi$ & $\ker \psi$ & & $\psi(b)$ & $\psi(a)$ &   $\im \psi$ &  $\dg(\psi)$
 \\ \hline
$\psi_1$ & $ D^*_{4n}$ &  &  $1$ & $1$   &  $1$ & $0$   \\ \hline
$\psi_2$ & $\zhclass{b^2,a}\cong D^*_{4\cdot \frac{n}{2}}$\ & ($2|\frac{n}{2}$)
  &  $b^n$ & $1$   &  $\zhclass{b^n}\cong \bbZ_2$ & $2n$  \\ \hline
 & $\zhclass{b^2,a}\cong D'_{\frac{n}{2}\cdot 2^2}$\ & ($2\not|\frac{n}{2}$)
  &  $b^n$ & $1$    &  $\zhclass{b^n}\cong \bbZ_2$   & $0$  \\ \hline
$\psi_3$ & $\zhclass{b}\cong \bbZ_{2n}$ &
  & $1$   &  $b^n$  &  $\zhclass{b^n}\cong \bbZ_{2}$  & $0$   \\ \hline
$\psi_{4,k}$ & $\zhclass{b^{\frac{2n}{k}}}\cong \bbZ_{k}$\ & $(k|n, 2\not|k)$
 & $b^{k}$   &  $a$  &  $\zhclass{b^{k}, a}\cong D^*_{4\cdot \frac{n}{k}}$
 & $k^2$   \\ \hline
$\psi_{5,l}$ & $1$ & &  $b^l$\ $(2n,l)=1$  &  $a$ &  $D^*_{4n}$   & $l^2$ \\ \hline
$\psi_6$ & $1$ & &  $b$  &  $ba$ &  $D^*_{4n}$   & $1$   \\ \hline
$\psi_7$ & $\zhclass{b^4}\cong \bbZ_{\frac{n}{2}}$ & ($2\not|\frac{n}{2}$)
 &  $b^{\frac{n}{2}}a$  &  $a$ & $\zhclass{b^{\frac{n}{2}}, a}\cong D^*_{4\cdot 2}$
 & $\frac{n^2}{4}$ \\ \hline
$\psi_8$ & $\zhclass{b^4}\cong \bbZ_{\frac{n}{2}}$ & ($2\not|\frac{n}{2}$)
 &  $a$  &  $b^{\frac{n}{2}}$ & $\zhclass{b^{\frac{n}{2}}, a}\cong D^*_{4\cdot 2}$
 & $\frac{n^2}{4}$ \\ \hline

\end{tabular}
\end{center}
\caption{endomorphisms of $D^*_{4n}$}
\end{table}
\end{Lemma}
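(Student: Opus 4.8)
The plan is to split the proof into a purely group-theoretic classification of the endomorphisms up to conjugacy, followed by a degree computation that rests on Lemma~\ref{degfromself} together with the lens-space formula of Lemma~\ref{deglenspace}.

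First I would note that an endomorphism $\psi$ of $D^*_{4n}$ is determined by the pair $(\psi(b),\psi(a))$, and that the defining relations of Proposition~\ref{presentation} force $\psi(a)^4=1$, $\psi(a)^2=\psi(b)^n$ and $(\psi(a)\psi(b))^2=\psi(b)^n$. Writing each element in its normal form $b^s$ or $b^sa$, and recalling that every $b^sa$ has order $4$ with $(b^sa)^2=b^n$, whereas $b^s$ has order $2n/\gcd(2n,s)$, these relations cut the possibilities for $\psi(a)$ down to four families: $\psi(a)=1$, $\psi(a)=b^n$ (the central involution), $\psi(a)$ an order-$4$ power of $b$ such as $b^{n/2}$, or $\psi(a)=b^sa$; each choice then sharply restricts $\psi(b)$.

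Next I would normalize by conjugacy using Lemma~\ref{conjD4n}: conjugation by $b$ sends $b^sa\mapsto b^{s+2}a$, and conjugation by $a$ sends $b^l\mapsto b^{-l}$ and $b^sa\mapsto b^{-s}a$. This lets me replace $\psi(a)$ by $a$ or $ba$ and reduce the exponents in $\psi(b)$, collapsing the case list. For each normalized pair I would identify $\im\psi$ from Lemma~\ref{subgroupD4n} — it is trivial, the centre $\zhclass{b^n}\cong\bbZ_2$, a cyclic $\bbZ_4$, a cyclic $\zhclass{b^k}$, or a non-cyclic $D^*_{4\cdot\frac{n}{k}}$, $D'_{\frac{n}{k}\cdot 2^2}$ or $D^*_{4\cdot 2}\cong Q_8$ — and then exhibit $\psi$, up to conjugacy, as the composition of a surjection onto that image (one of $\psi_1,\psi_2,\psi_3,\psi_{4,k},\psi_7,\psi_8$) with a product of the automorphisms $\psi_{5,l}$ and $\psi_6$. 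The assertion that every endomorphism is such a composition is precisely the claim that this matching \emph{exhausts} all relation-compatible pairs, and verifying this exhaustion is the combinatorial heart of the argument.

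Finally I would compute the degrees. For $\psi_1,\psi_2,\psi_3$ the image is trivial or $\bbZ_2$ and $\dg$ is obtained directly. For $\psi_{4,k},\psi_{5,l},\psi_6,\psi_7,\psi_8$ I would apply Lemma~\ref{degfromself} with $G_*=D^*_{4n}/\ker\psi$, so that $\dg(\psi)=\dg(\xi)\,\dg(\varphi)\,[D^*_{4n}:\im\psi]$; the remaining factors are evaluated by restricting to the maximal cyclic subgroup $\zhclass{b}$, whose orbit space is a lens space determined in Lemma~\ref{alllensspace}, and applying the self-map degree formula of Lemma~\ref{deglenspace}, whose quadratic dependence on the power $l$ is what produces the values $l^2$, $k^2$ and $\frac{n^2}{4}$ (these may also be cross-checked against \cite{Du}). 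I expect the main obstacle to be twofold: proving the completeness of the eight-row table, and correctly tracking the parity of $\frac{n}{2}$, which governs whether $\zhclass{b^2,a}$ is of type $D^*$ or $D'$ and hence whether $\dg(\psi_2)$ equals $2n$ or $0$, and whether $\psi_7,\psi_8$ occur at all.
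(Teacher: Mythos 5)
Your classification step follows essentially the same route as the paper: a case analysis on $\psi(a)$ up to conjugacy (using Lemma~\ref{conjD4n}) constrained by the relations $a^2=b^n=(ab)^2$, $a^4=1$, followed by identification of the image via Lemma~\ref{subgroupD4n}. That part is only sketched, but the strategy is sound and matches the paper's. The genuine gap is in your degree computations, where the proposed tools are insufficient in exactly the cases you flag as delicate.

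First, for $\psi_2$ and $\psi_3$ you say the degree ``is obtained directly,'' but it is not: both have image $\zhclass{b^n}\cong\bbZ_2$, and the issue is to decide whether the induced map $S^3/D^*_{4n}\to L(2;1,1)$ has odd or even degree. Restriction to cyclic subgroups cannot settle this, because the maximal cyclic subgroups of $D^*_{4n}$ have orders $2n$ and $4$, and since $n$ is even their least common multiple is $2n$, not $4n$; so restrictions determine $\dg(\psi_2)$ only modulo $2n$ and cannot distinguish $0$ from $2n$. The paper resolves this with the cup-product structure of $H^*(S^3/D^*_{4n};\bbZ_2)$ from \cite[Theorem 2.2]{TZ} (which of the three nonzero classes in $H^1$ has nonvanishing cube depends on the parity of $\frac{n}{2}$, giving $\dg(\psi_2)=2n$ or $0$ and $\dg(\psi_3)=0$). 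Your proposal contains no substitute for this input. Second, your plan to evaluate $\dg(\psi_6)$, $\dg(\psi_7)$, $\dg(\psi_8)$ by restricting to the maximal cyclic subgroup $\zhclass{b}$ fails outright: $\psi_7(b)=b^{\frac{n}{2}}a$, $\psi_8(b)=a$ and $\psi_6(a)=ba$, so $\zhclass{b}$ (respectively $\zhclass{a}$) is not carried into itself and Lemma~\ref{deglenspace} does not apply as you intend; even where it does apply it again only controls the degree modulo $2n$. The paper instead realizes $\psi_6$ by an explicit linear map on $\mathbb{R}^4$ of determinant $16$ to get $\dg(\psi_6)=1$ exactly, and for $\psi_7,\psi_8$ restricts to the $\psi$-invariant subgroup $\zhclass{b^{\frac{n}{2}},a}\cong Q_8$, invokes \cite{HKWZ} for degree $1 \tmod{8}$ there, combines with degree $0 \tmod{\frac{n}{2}}$ on $\zhclass{b^4}$, and recovers $\frac{n^2}{4}$ by the Chinese remainder theorem. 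You would need to supply arguments of this kind to close the proof.
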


\begin{proof}
Let $\psi$ be an endomorphism on $D^*_{4n}$.
By Lemma~\ref{conjD4n}, up to a conjugation, we may assume $\psi(a)=1, a, ba, b^k$.

(1) $\psi(a)=1$.

(1.1) $\psi(b)=b^s$. Then $\psi(a^2)=1$, $\psi(b^n)=b^{ns}$ and $\psi((ab)^2)=b^{2s}$. The relation $a^2=b^n=(ab)^2$ yields that $0 \equiv ns\equiv 2s \tmod{2n}$. Thus, $s\equiv 0,n \tmod{2n}$. Both are in the table ($\psi_1$ and $\psi_2$).

(1.2) $\psi(b)=b^sa$. Note that $\psi((ab)^2)=b^sab^sa=a^2\ne 1 =\psi(a^2)$. Thus, this case is impossible.

(2) $\psi(a)=a$.

(2.1) $\psi(b)=b^s$. Then $\psi(a^2)=a^2=b^n$, $\psi(b^n)=b^{ns}$ and $\psi((ab)^2)=ab^{s}ab^s=a^2$. The relation $a^2=b^n=(ab)^2$ yields that $ns\equiv n \tmod{2n}$. It follows that $s$ is odd. Note that $s=s's''$, where $s'|2n$ and $(2n, s'')=1$. We have that $\psi= \psi_{5, s''} \psi_{4,s'}$.

(2.2) $\psi(b)=b^sa$. Then $\psi(a^2)=a^2=b^n$, $\psi(b^n)=(b^sab^sa)^{\frac{n}{2}}=a^n=b^{n^2/2}$ and $\psi((ab)^2)=ab^saab^sa=b^{-2s}$. The relation $a^2=b^n=(ab)^2$ yields that $n\equiv n^2/2 \equiv -2s \tmod{2n}$. It follows that $\frac{n}{2}$ is odd and $s=\frac{n}{2}, \frac{3n}{2}$. Since $ab^{\frac{n}{2}}aa^{-1}=b^{-\frac{n}{2}}a =b^{\frac{3n}{2}}a$, we can consider only the case $\psi(b)=b^{\frac{n}{2}}a$, which is $\psi_7$.

(3) $\psi(a)=ba$. Note that there is an isomorphism $\psi_6$ with $b\mapsto b$ and $a\mapsto ba$ in this case. By composing with such an isomorphism, this case is the same as case (2).

(4) $\psi(a)=b^k$. Since $a^4=b^{2n}=1$, we must have that $4k\equiv 0 \tmod{2n}$. Up to conjugation by $a$, we may assume that $k=\frac{n}{2}$ or $n$.

(4.1) $\psi(a)=b^{\frac{n}{2}}$.

(4.1.1)  $\psi(b)=b^s$. Then $\psi(a^2)=b^n$, $\psi(b^n)=b^{ns}$ and $\psi((ab)^2)=(b^{\frac{n}{2}}b^s)^2=b^{n+2s}$. The relation $a^2=b^n=(ab)^2$ yields that $n \equiv ns\equiv n +2s \tmod{2n}$, which implies that $s$ is odd and $s \equiv 0 \tmod{n}$. This case is impossible because $n$ is even.

(4.1.2)  $\psi(b)=b^sa$. Then $\psi(a^2)=b^n$, $\psi(b^n)=(b^sab^sa)^{\frac{n}{2}}=a^n$ and $\psi((ab)^2)=(b^{\frac{n}{2}}b^sa)^2=a^2$. The relation $a^2=b^n=(ab)^2$ yields that $n \equiv 2 \tmod{4}$. By Lemma~\ref{conjD4n}, up to conjugation, there are two possibilities: either $\psi(b)=a$ or $\psi(b)=ba$. They are respectively $\psi_8$ and $\psi_6\psi_8$.

(4.2) $\psi(a)=b^n$.

(4.2.1)  $\psi(b)=b^s$. Then $\psi(a^2)=1$, $\psi(b^n)=b^{ns}$ and $\psi((ab)^2)=(b^{n}b^s)^2=b^{2n+2s}$. The relation $a^2=b^n=(ab)^2$ yields that $0 \equiv ns\equiv 2n +2s \tmod{2n}$, which implies that $s=0$ or $n$. They are respectively $\psi_3$ and $\psi_2\psi_6$.

(4.2.2)  $\psi(b)=b^sa$. Note that $\psi((ab)^2)=(b^nb^sa)^2=a^2\ne 1=\psi(a^2)$. Thus, this case is also impossible.

Now we turn to the degree.

Consider the endomorphism $\psi_6$ with $\psi_6(b)=b$ and $\psi_6(a)=ba$. A direct computation shows that a linear map on $\mathbb R^4$ determined by the following matrix induces a self-map on $S^3/D^*_{4n}$, whose induced endomorphism on $\pi_1$ is exactly $\psi'$.
$$
\left(
\begin{array}{cccc}
 1 & 1 & 1 & 1 \\
 -\cos \frac{\pi }{n}-\sin \frac{\pi }{n} &
   \cos \frac{\pi }{n}-\sin \frac{\pi }{n} &
   \cos \frac{\pi }{n}+\sin \frac{\pi }{n} &
   \sin \frac{\pi }{n}-\cos \frac{\pi }{n} \\
 \sin \frac{\pi }{n}-\cos \frac{\pi }{n} &
   -\cos \frac{\pi }{n}-\sin \frac{\pi }{n} &
   \cos \frac{\pi }{n}-\sin \frac{\pi }{n} &
   \cos \frac{\pi }{n}+\sin \frac{\pi }{n} \\
 -1 & 1 & -1 & 1
\end{array}
\right)
$$
Since the determinant of the above matrix is $16$, the degree of the corresponding self-map must be $1$. Thus, we obtain that $\dg(\psi_6)=1$.

Let us write $\psi'_2$ and $\psi'_3$ for two homomorphisms from $D^*_{4n}$ to $\bbZ_2=\zhclass{c}$, defined by $\psi'_2(b)=c$, $\psi'_2(a)=1$,  $\psi'_3(b)=1$ and $\psi'_3(a)=c$. They are two non-zero elements in $H^1(S^3/D^*_{4n}, \bbZ_2) = \bbZ_2\oplus \bbZ_2$. The other non-zero element is given by $\psi'_2\psi_6$. If $2|\frac{n}{2}$, we know from \cite[Theorem 2.2]{TZ} that the cube of one element of these three is zero and the cube of the other two are both $1$. Since $H^*(S^3/\bbZ_2)=\bbZ_2[c^*]$, we obtain that $\dg (\varphi) = \varphi^*(c^*)^3[S^3/D^*_{4n}]$ for any $\varphi: D^*_{4n}\to \bbZ_2$, where $[S^3/D^*_{4n}]\in H_3(S^3/D^*_{4n}, \bbZ_2)$ is the fundamental class. Note that $\dg \psi_6 =1 \tmod{4n}$. It follows that $\dg \psi'_2\psi_6 = \dg \psi'_2 =1$ and hence $\dg \psi'_3=0$. These mean that $\dg \psi_2=2n$ and $\dg \psi_3=0$.  Recall from \cite[Theorem 2.2]{TZ} again that the cube of any element in $H^1(S^3/D^*_{4n}, \bbZ_2)$ vanishes if $2\not|\frac{n}{2}$. Thus, $\dg\psi_2 = \dg\psi_3 =0$.

For $\psi_7$ with $b\mapsto b^{\frac{n}{2}}a$ and $a\mapsto a$. It follows that $b^{\frac{n}{2}}\mapsto  (b^{\frac{n}{2}}a)^{\frac{n}{2}}$ which is $b^{\frac{n}{2}}a$  if $n\equiv 2 \tmod{8}$, is $b^{\frac{3n}{2}}a$ if $n\equiv 6 \tmod{8}$. Note that $\zhclass{b^{\frac{n}{2}}, a}=Q_8$. We may identify $a$ as $i$, $b^{\frac{n}{2}}$ as $j$ in quaternion number. Thus, $\psi_7|_{\zhclass{b^{\frac{n}{2}}, a}}(i)=i$ and  $\psi_7|_{\zhclass{b^{\frac{n}{2}}, a}}(j)=\pm k$. Hence, $\psi_7|_{\zhclass{b^{\frac{n}{2}}, a}}$ is an isomorphism. By \cite{HKWZ}, we  have that $\dg(\psi_7|_{\zhclass{b^{\frac{n}{2}}, a}})=1$. Clearly, $\dg(\psi_7|_{\zhclass{b^4}})=0$. Hence, $\dg(\psi_7)=\frac{n^2}{4}$.

For $\psi_8$ with $b\mapsto a$ and $a\mapsto b^{\frac{n}{2}}$. Similar to the case above, $\psi_8|_{\zhclass{b^{\frac{n}{2}}, a}}$ is an isomorphism on $Q_8$. By \cite{HKWZ}, we have that $\dg(\psi_8|_{\zhclass{b^{\frac{n}{2}}, a}})=1$. Clearly, $\dg(\psi_8|_{\zhclass{b^4}})=0$. Hence, $\dg(\psi_8)=\frac{n^2}{4}$.

The degrees for $\psi_{4,k}$ and $\psi_{5,l}$ follow from \cite[p.1248, Case II]{Du}.
\end{proof}

It should be mentioned that $\psi_3 =\psi_8\psi_2$, if $2\not| \frac{n}{2}$. The degree for $\psi_2, \psi_7, \psi_8$ seem to be missing in \cite{Du}.

\begin{corollary}
If $n>2$, then the group $Out(D^*_{4n})\cong \bbZ_2\times \bbZ_{n}^*$ is generated by $\eta_1, \eta_2$, where $\eta_1(b)=b$, $\eta_1(a)=ba$ and $\eta_2(b)=b^s$, $\eta_2(a)=a$. Here $s$ is an element generating
$\bbZ_{n}^*\cong \bbZ_{\varphi(n)}$.

The group $Out(D^*_{4\cdot 2})\cong S_3$ is generated by $\eta_3, \eta_4$, where $\eta_3(b)=ba$, $\eta_3(a)=a$ and $\eta_4(b)=a$, $\eta_2(a)=b$.
\end{corollary}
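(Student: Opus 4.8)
The plan is to determine $\mathrm{Aut}(D^*_{4n})$ completely, locate the inner automorphisms inside it, and then read off $\mathrm{Out}(D^*_{4n})$ together with the promised generators. For $n>2$ I would first argue that every automorphism $\phi$ has the shape $\phi(b)=b^l$, $\phi(a)=b^sa$ with $(l,2n)=1$. Indeed $b$ has order $2n>4$, and by Lemma~\ref{conjD4n} the only elements of that order lie in $\langle b\rangle$, forcing $\phi(b)=b^l$; moreover $\phi(a)$ must be an order-$4$ element whose square is the central element $a^2=b^n$, and the only such elements outside $\langle b\rangle$ are the $b^sa$, while the inside candidates $b^{n/2},b^{3n/2}$ would make $\im\phi\subseteq\langle b\rangle$ cyclic. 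Conversely each such assignment preserves the relations $a^2=b^n=(ab)^2$, so, writing $\phi=(l;s)$,
\[
\mathrm{Aut}(D^*_{4n})=\{(l;s):l\in\bbZ_{2n}^*,\ s\in\bbZ_{2n}\},\qquad (l;s)(l';s')=(ll';\,s+ls').
\]
This is the holomorph $\bbZ_{2n}\rtimes\bbZ_{2n}^*$, and in it $\eta_1=\psi_6=(1;1)$ and $\eta_2=\psi_{5,s}=(s;0)$ are exactly the two surjective endomorphisms isolated in Lemma~\ref{selfdegreeD4n}.

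Next I would compute the inner automorphisms from the center $\langle b^n\rangle\cong\bbZ_2$: using $bab^{-1}=b^2a$ and $aba^{-1}=b^{-1}$ one gets conjugation by $b^k$ equal to $(1;2k)$ and by $b^ka$ equal to $(-1;2k)$, so $\mathrm{Inn}(D^*_{4n})=\{(l;s):l=\pm1,\ s\text{ even}\}$, of order $2n$. I would then define
\[
\Phi\colon\mathrm{Aut}(D^*_{4n})\longrightarrow\bbZ_2\times(\bbZ_{2n}^*/\{\pm1\}),\qquad \Phi(l;s)=(s\bmod 2,\ [l]),
\]
and check that, since every $l$ is odd, the parity coordinate is additive while the class of $l$ is multiplicative, so $\Phi$ is a homomorphism with kernel precisely $\mathrm{Inn}(D^*_{4n})$. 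As $|\mathrm{Out}(D^*_{4n})|=\varphi(2n)$, a cardinality count shows $\Phi$ is onto, whence $\mathrm{Out}(D^*_{4n})\cong\bbZ_2\times(\bbZ_{2n}^*/\{\pm1\})$, with $[\eta_1]$ generating the $\bbZ_2$ factor and $[\eta_2]=[\psi_{5,s}]$ generating the multiplicative factor.

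The hard part will be the final identification of the multiplicative factor $\bbZ_{2n}^*/\{\pm1\}$ with $\bbZ_n^*$ together with the choice of a single generating $s$. This is delicate: the naive reduction $\bbZ_{2n}^*\twoheadrightarrow\bbZ_n^*$ has kernel $\{1,n+1\}$, which coincides with $\{\pm1\}=\{1,2n-1\}$ only when $n=2$, so one cannot simply reduce modulo $n$. I would instead analyze the factor prime-by-prime through $\bbZ_{2n}^*\cong\prod_p\bbZ_{p^{a_p}}^*$, treating the $2$-primary part where $-1$ sits. This is exactly where I would scrutinize the statement most carefully: for $n\equiv2\pmod4$ the projection $(x,u)\mapsto(-1)^xu$ does give $\bbZ_{2n}^*/\{\pm1\}\cong\bbZ_n^*$, and the existence of a single generator $s$ already presupposes this quotient is cyclic; I would flag, however, that when a high power of $2$ divides $n$ (e.g.\ $n=8$, where $\bbZ_{16}^*/\{\pm1\}\cong\bbZ_4$ but $\bbZ_8^*\cong\bbZ_2\times\bbZ_2$) the isomorphism type of $\bbZ_{2n}^*/\{\pm1\}$ need not match $\bbZ_n^*$, so the structural content is really $\bbZ_2\times(\bbZ_{2n}^*/\{\pm1\})$ and the precise hypotheses on $n$ deserve checking.

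Finally, for $n=2$ the group $D^*_{8}$ is the quaternion group $Q_8$, whose $\mathrm{Out}$ acts faithfully by permuting its three order-$4$ cyclic subgroups, giving $\mathrm{Out}(Q_8)\cong S_3$. Writing $i=a$, $j=b$, $k=ab$ (so $a^2=b^2=-1$ and $ba=-ab$), I would check that $\eta_3\colon b\mapsto ba,\ a\mapsto a$ sends $i\mapsto i,\ j\mapsto-k,\ k\mapsto j$ and hence induces the transposition interchanging $\langle j\rangle$ and $\langle k\rangle$, while $\eta_4\colon b\mapsto a,\ a\mapsto b$ sends $i\mapsto j,\ j\mapsto i,\ k\mapsto-k$ and so interchanges $\langle i\rangle$ and $\langle j\rangle$; two distinct transpositions generate $S_3$, completing this case. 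This case must be separated out precisely because, when $n=2$, the order-$4$ generator $a$ may legitimately be mapped into $\langle b\rangle$, so the rigidity of $\phi(a)$ used for $n>2$ fails.
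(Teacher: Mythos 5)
Your computation is correct and, in fact, more than the paper supplies: the corollary is stated as an unproved consequence of Lemma~\ref{selfdegreeD4n}, whose proof only isolates the automorphisms $\psi_{5,l}$ ($b\mapsto b^l$, $a\mapsto a$) and $\psi_6$ ($b\mapsto b$, $a\mapsto ba$); no identification of $\mathrm{Inn}$ or of the quotient is carried out there. Your description $\mathrm{Aut}(D^*_{4n})\cong\bbZ_{2n}\rtimes\bbZ_{2n}^*$ for $n>2$, the identification $\mathrm{Inn}=\{(l;s): l=\pm1,\ s\ \text{even}\}$, and the resulting isomorphism $\mathrm{Out}(D^*_{4n})\cong\bbZ_2\times(\bbZ_{2n}^*/\{\pm1\})$ are all right, as is the $n=2$ case via the action of $\mathrm{Out}(Q_8)$ on the three cyclic subgroups of order $4$.

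The reservation you raise in your third paragraph is not merely a point to ``scrutinize'': it is a genuine error in the statement being proved, and you should say so outright rather than hedge. For $n=8$ one has $\bbZ_{16}^*/\{\pm1\}\cong\bbZ_4$, so $\mathrm{Out}(D^*_{32})\cong\bbZ_2\times\bbZ_4$, whereas the corollary asserts $\bbZ_2\times\bbZ_8^*\cong\bbZ_2\times\bbZ_2\times\bbZ_2$ (this also matches the classical formula $\mathrm{Out}(Q_{2^k})\cong\bbZ_2\times\bbZ_{2^{k-3}}$). Likewise the parenthetical $\bbZ_n^*\cong\bbZ_{\varphi(n)}$ is false for $n=8,12,16,\dots$, since $\bbZ_n^*$ is cyclic for even $n$ only when $n=2,4$ or $n=2p^k$. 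The correct general statement, which your argument proves, is $\mathrm{Out}(D^*_{4n})\cong\bbZ_2\times(\bbZ_{2n}^*/\{\pm1\})$, generated by the classes of $\psi_6$ and of the $\psi_{5,s}$ with $s$ ranging over generators of $\bbZ_{2n}^*/\{\pm1\}$ (one $s$ suffices exactly when that quotient is cyclic, e.g.\ when $n\equiv2\pmod 4$ with $n/2$ a prime power). This slip does not propagate into the degree computations of Theorem~4.4, which use only the set $\{l^2:(l,2n)=1\}$ of degrees of automorphisms, not the isomorphism type of $\mathrm{Out}$; but as a proof of the corollary as printed, your argument correctly shows the claim cannot be completed because it is false for $8\mid n$.
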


\begin{theorem}
Let $\varphi$ be a surjective non-trivial homomorphism from $D^*_{4n}$ to the fundamental group $G$ of some $3$-dimensional spherical manifold $S^3/G$. Then $\varphi = \eta'\psi \eta''$, where $\eta''\in Aut(D^*_{4n})$ and $\eta'\in Aut(G)$. Moreover, all possible $G$ and $\psi$ are listed as follows:
\begin{table}[h]
\begin{center}
\begin{tabular}{|l|l|l|l|l|l|l|}
\hline
$G$ &  $S^3/G$ & $\psi(b)$ & $\psi(a)$ &    $\dg(\psi)$ \\ \hline
$D^*_{4m} =\zhclass{ \bar b, \bar a \mid \bar a^2 = \bar b^n= (\bar a\bar b)^2, \bar a^4 =1}$
 & $S^3/D^*_{4m}$ & $\bar b$ & $\bar a$ &  $\frac{n}{m}$ \\ \hline
$\bbZ_2 = \zhclass{c\mid c^2}$ & $L(2;1,1)$ & $c$ & $1$ &  $1+\frac{n}{2}$ \\ \hline
$\bbZ_2 = \zhclass{c\mid c^2}$ & $L(2;1,1)$ & $1$ & $c$ &  $0$ \\ \hline
\end{tabular}
\end{center}
\caption{domain $D^*_{4n}$}
\end{table}
\end{theorem}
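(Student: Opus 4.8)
The plan is to classify $\varphi$ through its kernel. Since $\varphi$ is onto, $G\cong D^*_{4n}/\ker\varphi$, so the first task is to find which spherical groups occur as quotients of $D^*_{4n}$. Recall that the commutator subgroup is $\zhclass{b^2}\cong\bbZ_n$ and the abelianization is $\bbZ_2\oplus\bbZ_2$. By Lemma~\ref{subgroupD4n}, together with the fact that every $\zhclass{b^k}$ ($k\mid 2n$) is normal (conjugation by $a$ inverts $b$), the normal subgroups of $D^*_{4n}$ are exactly the cyclic groups $\zhclass{b^k}$ and the three index-$2$ subgroups $\zhclass{b},\zhclass{b^2,a},\zhclass{b^2,ba}$. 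Reading the quotient column of Lemma~\ref{subgroupD4n}, the quotient by $\zhclass{b^k}$ is the ordinary dihedral group $D_{2k}$ when $k\mid n$ and the binary group $D^*_{4\cdot(k/2)}$ when $k\mid 2n,\ k\nmid n$, while each index-$2$ subgroup gives $\bbZ_2$.

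\textbf{Pruning to spherical quotients.} An ordinary dihedral quotient $D_{2k}$ occurs in Proposition~\ref{presentation} only for $k=1$, where it is $\bbZ_2$; for $k\ge 2$ it is not a spherical group. For the binary quotient write $m=k/2$: the condition $k\mid 2n$ gives $m\mid n$, while $k\nmid n$ forces $n/m=2n/k$ to be odd, and since $n$ is even this makes $m$ even. Hence the quotient is the genuine group $D^*_{4m}$ presented as in the first table row, where the exponent $n$ on $\bar b$ collapses to the central $\bar b^m$ precisely because $n/m$ is odd. Thus the only spherical quotients are $\bbZ_2$ and $D^*_{4m}$ with $m\mid n$, $m$ even, $n/m$ odd.

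\textbf{Normal form.} Suppose first $G\cong D^*_{4m}$. Then $|\ker\varphi|=n/m$, and the unique normal subgroup of that order is $\zhclass{b^{2m}}$; consequently every such $\varphi$ factors through the fixed projection $\psi\colon D^*_{4n}\to D^*_{4n}/\zhclass{b^{2m}}\cong D^*_{4m}$, $b\mapsto\bar b$, $a\mapsto\bar a$, so $\varphi=\eta'\psi$ with $\eta'\in\mathrm{Aut}(D^*_{4m})$ and $\eta''=\mathrm{id}$. Suppose instead $G\cong\bbZ_2$; then $\varphi$ is one of the three nonzero elements of $\myhom(D^*_{4n},\bbZ_2)\cong\bbZ_2\oplus\bbZ_2$, and since $\mathrm{Aut}(\bbZ_2)$ is trivial I take $\eta'=\mathrm{id}$ and use $\eta''\in\mathrm{Aut}(D^*_{4n})$. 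As characters factor through the abelianization, inner automorphisms act trivially, so the action reduces to the generators of the preceding Corollary, namely $a\mapsto ba$ and $b\mapsto b^s$; both preserve whether $\varphi(b)=1$. This invariant splits the three characters into two orbits, represented by $\psi(b)=1,\psi(a)=c$ (row~3) and $\psi(b)=c,\psi(a)=1$ (row~2).

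\textbf{Degrees and the main obstacle.} For the two $\bbZ_2$ rows, $\dg$ lies in $\bbZ_{|G|}=\bbZ_2$ and equals the invariants computed in the proof of Lemma~\ref{selfdegreeD4n} via $H^1(\,\cdot\,;\bbZ_2)\to H^3$ and \cite{TZ}: there $\dg\psi_3'=0$ and $\dg\psi_2'\equiv 1+\tfrac n2\pmod 2$, matching rows~3 and~2. For $G=D^*_{4m}$ set $k=n/m$ (odd); identify $D^*_{4m}$ with the subgroup $\zhclass{b^k,a}$ via $\bar b\mapsto b^k,\ \bar a\mapsto a$ (the relations match since $(b^k)^m=b^n=a^2$). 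Realizing $\psi$ by a map $g$ and composing with the $k$-fold covering $p\colon S^3/D^*_{4m}\to S^3/D^*_{4n}$ dual to $\zhclass{b^k,a}\hookrightarrow D^*_{4n}$, the map $p\circ g$ induces $b\mapsto b^k,\ a\mapsto a$, i.e.\ the endomorphism $\psi_{4,k}$ of Lemma~\ref{selfdegreeD4n}. Therefore $k\cdot\deg g=\deg(p\circ g)\equiv\dg\psi_{4,k}=k^2\pmod{4n}$, and dividing by $k$ (permissible as $\gcd(k,4n)=k$, since $k$ is odd and $k\mid n$) yields $\dg\psi\equiv n/m\pmod{4m}$. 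I expect the delicate points to be twofold: making the pruning airtight, i.e.\ certifying that no $D'$, $O^*_{48}$, $I^*_{120}$ or $T'$ quotient slips through and that the binary case always produces an \emph{even} $m$ with $n/m$ odd; and keeping the degree bookkeeping honest in the last step, where $\dg$ is only defined modulo the (differing) orders of the targets, so the cancellation of $k$ must be justified against the correct modulus rather than performed formally.
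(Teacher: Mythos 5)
Your argument is correct and is essentially the paper's: the published proof is a two-line reduction to the endomorphisms $\psi_{4,k}$, $\psi_2$, $\psi_3$ of Lemma~\ref{selfdegreeD4n}, and your classification of the normal subgroups/quotients, the orbit analysis under automorphisms, and the covering-space cancellation $k\deg g\equiv k^2\pmod{4mk}\Rightarrow\deg g\equiv k\pmod{4m}$ are precisely the details that reduction presupposes (the last step being the content of Lemma~\ref{degfromself}). The only slip is the edge case $n=2$, where $Out(D^*_{4\cdot 2})\cong S_3$ does \emph{not} preserve whether $\varphi(b)=1$ (it permutes all three nonzero characters transitively); this is harmless because then $1+\tfrac{n}{2}\equiv 0\pmod 2$, so the two $\bbZ_2$ rows carry the same degree and the table remains valid.
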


\begin{proof}
The first row comes from the degree of $\psi_{4,k}$ in Lemma~\ref{selfdegreeD4n}, where $m=\frac{n}{k}$. The other two come from the degrees of $\psi_2$ and $\psi_3$ there.
\end{proof}

For $O^*_{48}$, we know that the only non-trivial proper quotient is $\mathbb Z_2$. Hence,

\begin{theorem}
Let $\varphi$ be a non-trivial surjective homomorphism from $O^*_{48}$ to $G$ induced by a map from $S^3/O^*_{48}$ to some $3$-dimensional spherical manifold $S^3/G$. Then $\varphi$ is conjugate to one of the following.
\begin{center}
\begin{tabular}{|l|l|l|l|l|l|l|}
\hline
$G$ & $S^3/G$ & $\psi(b)$ & $\psi(a)$ &    $\dg(\psi)$ \\ \hline
$O^*_{48}$ & $S^3/O^*_{48}$
 & $b$ & $a$ & $1$ \\ \hline
$O^*_{48}$ & $S^3/O^*_{48}$
 & $b$ & $a^{-1}$ & $25$ \\ \hline
$\bbZ_{2}$  & $L(2; 1, 1)$
 & $1$ & $c$ & $1$ \\ \hline
\end{tabular}
\end{center}
\end{theorem}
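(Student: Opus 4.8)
The plan is to first determine the possible targets $G$, then reduce the automorphism case modulo conjugacy, and finally compute the three degrees by restricting $\varphi$ to well-chosen cyclic and Sylow subgroups. Since $\varphi$ is onto, $G\cong O^*_{48}/\ker\varphi$ is a quotient of $O^*_{48}$ which, by hypothesis, is the fundamental group of a spherical $3$-manifold. By Lemma~\ref{subgroupO48} the only nontrivial proper such quotient is $\bbZ_2=O^*_{48}/T^*_{24}$. Hence either $\ker\varphi=1$, so $\varphi\in\mathrm{Aut}(O^*_{48})$ and $G=O^*_{48}$, or $\ker\varphi=T^*_{24}$ and $\varphi$ is the abelianization onto $G=\bbZ_2$.

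Next I would list $\mathrm{Aut}(O^*_{48})$ up to conjugacy, which by $\overline{\deg}$ being a conjugacy invariant reduces to $\mathrm{Out}(O^*_{48})$. By Lemma~\ref{conjTo48} every element of order $6$ lies in the single class $[b]$, so after an inner automorphism I may take $\psi(b)=b$; the defining relations then force $\psi(a)$ to be an order-$4$ element with $\psi(a)^2=a^2$, and one checks this leaves exactly $\psi(a)=a$ and $\psi(a)=a^{-1}$, i.e. $\mathrm{Out}(O^*_{48})\cong\bbZ_2$ (cf. \cite{GG}). The class $a\mapsto a^{-1}$ is genuinely outer, because it interchanges the two distinct order-$8$ classes $[ba]$ and $[ba^{-1}]$ of Lemma~\ref{conjTo48}, which no inner automorphism can do.

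For the degrees, the identity gives $\dg=1$. For the outer automorphism $\psi\colon b\mapsto b,\ a\mapsto a^{-1}$ the key point is that $(ba)^4=a^2$ is the unique central involution, so $\psi(ba)=ba^{-1}=(ba)^5$; thus $\psi$ fixes the cyclic subgroup $\langle ba\rangle\cong\bbZ_8$ setwise, acting there by $x\mapsto x^5$, and hence preserves the Sylow $2$-subgroup $Q_{16}\supset\langle ba\rangle$, while on the Sylow $3$-subgroup $\langle b^2\rangle\cong\bbZ_3$ it is the identity. Invoking the transfer principle that $\dg(\psi)$ modulo each prime power is detected on the corresponding Sylow subgroup, together with the quaternionic self-map computation of \cite{HKWZ} (yielding $\dg(\psi|_{Q_{16}})\equiv 5^2\equiv 9\pmod{16}$, the same $l^2$-phenomenon as for $\psi_{5,l}$ in Lemma~\ref{selfdegreeD4n}) and $\dg(\psi|_{\bbZ_3})\equiv1\pmod3$, the Chinese remainder theorem gives $\dg(\psi)\equiv25\pmod{48}$. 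For the abelianization $\varphi\colon b\mapsto1,\ a\mapsto c$ onto $\bbZ_2$, the degree mod $2$ is read off the mod-$2$ cohomology ring exactly as in the $D^*_{4n}$ computation of Lemma~\ref{selfdegreeD4n}: with $x=\varphi^*(c^*)\in H^1(S^3/O^*_{48};\bbZ_2)$ one has $\dg(\varphi)=x^3[S^3/O^*_{48}]$, and since $x^3$ generates $H^3(\,\cdot\,;\bbZ_2)$ (cf. \cite{TZ}) this is $1$.

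The main obstacle is justifying the exact value $25$ rather than merely a consistent residue. Restriction to the cyclic group $\langle ba\rangle$ is of no help here, since $5^2\equiv1\pmod8$ makes $\psi$ indistinguishable from the identity modulo $8$; the distinguishing information lives entirely in the $2$-primary part modulo $16$, which forces one onto the genuinely non-lens quaternionic space form $S^3/Q_{16}$, whose cohomology is \emph{not} detected on cyclic subgroups. Making both the $Q_{16}$ degree and the Sylow-detection step rigorous is the delicate point, and is where I would lean on \cite{Du,HKWZ}.
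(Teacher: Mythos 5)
Your proposal is correct, and for the classification of targets (via Lemma~\ref{subgroupO48}), the reduction to $\mathrm{Out}(O^*_{48})\cong\bbZ_2$, and the third row (the mod~$2$ cohomology argument with $\beta_1^3$ generating $H^3(S^3/O^*_{48};\bbZ_2)$ by \cite{TZ}) it coincides with the paper's proof. Where you genuinely diverge is the value $25$ for the outer automorphism: the paper simply quotes \cite{HKWZ} for the first two rows, whereas you derive $25$ by restricting to Sylow subgroups and applying the Chinese remainder theorem. That derivation is sound, and the two points you flag as delicate are in fact already covered by tools in the paper: (i) the ``Sylow-detection'' step is not a transfer argument but the covering-space lifting argument $\dg(\psi)\equiv\dg(\psi|_H)\tmod{|H|}$ for a $\psi$-invariant subgroup $H$, which is exactly how the paper computes $\dg(\psi_1)$ in the proof of Lemma~\ref{selfdegreeT'} and the degrees in Theorem~4.9; your verification that $\psi$ preserves the Sylow $2$-subgroup is correct, since $\psi(ba)=ba^{-1}=(ba)^5$ (using that $(ba)^4=a^2$ is central) and distinct Sylow $2$-subgroups meet only in the normal $Q_8$, so the $Q_{16}$ containing $\langle ba\rangle$ is $\psi$-invariant; (ii) the degree $\dg(\psi|_{Q_{16}})\equiv 25\equiv 9\tmod{16}$ does not require an external quaternionic computation, because $Q_{16}=D^*_{4\cdot 4}$ and any automorphism sending the generator of the cyclic $\bbZ_8$ to its fifth power is $\psi_6^{\,j}\psi_{5,5}$ in the notation of Lemma~\ref{selfdegreeD4n}, hence has degree $5^2=25$. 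Combined with $\dg(\psi|_{\langle b^2\rangle})\equiv 1\tmod 3$, the CRT gives $25\tmod{48}$. So your route buys a self-contained verification of the entry $25$ from the paper's own lemmas, at the cost of the extra subgroup bookkeeping, while the paper's route is shorter but opaque, deferring entirely to \cite{HKWZ}.
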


\begin{proof}
The first two rows comes from \cite{HKWZ}. Let $f: S^3/O*_{48} \to L(2; 1, 1)$ be a map, inducing a homomorphism indicated in third row. By \cite[Theorem 4.13]{TZ}, $H^3(S^3/O^*_{48}, \bbZ_2)$
is generated by $\beta_1^3$ for the unique non-zero element $\beta_1\in H^1(S^3/O^*_{48}, \bbZ_2)$. It follows that the $\deg(f) \equiv  1 \tmod{2}$ because $f^*(c^3) =\beta_1^3$. We obtain the third row.
\end{proof}

We conclude that $D(O^*_{48})=\{m+48k\mid m=0,1, 24, 25, k\in \mathbb Z\}$. Note that the values $24 + 48k, k\in \mathbb Z$ were missing in \cite{Du}.
For $I^*_{120}$ the only possible proper quotient is the trivial group.

\begin{theorem}
Let $\varphi$ be a non-trivial surjective homomorphism from $I^*_{120}$ to $G$ induced by a map from $S^3/I^*_{120}$ to some $3$-dimensional spherical manifold $S^3/G$, then $\varphi$ is conjugate to either the identity  or the automorphism $\eta$ of $I^*_{120}$ determined by  $a\mapsto a^{-1}$, $b\mapsto b^{-1}abab^{-1}a$, where $G$ is also $I^*_{120}$. Here, $\dg(\eta) \equiv 49 \tmod{120}$.
\end{theorem}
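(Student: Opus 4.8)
The plan is to pin down $G$ and $\varphi$ group-theoretically first, and only then to compute the two degrees. Since $\varphi$ is surjective, $G\cong I^*_{120}/\ker\varphi$ and $\ker\varphi$ is normal. By Lemma~\ref{subgroupI120} the only normal subgroups of $I^*_{120}$ are $1$, the center $\langle a^2\rangle\cong\bbZ_2$, and $I^*_{120}$ itself. Non-triviality of $\varphi$ excludes $\ker\varphi=I^*_{120}$, while $\ker\varphi=\langle a^2\rangle$ would give $G\cong A_5$, which by the same lemma is not the fundamental group of a spherical $3$-manifold, against the hypothesis. Hence $\ker\varphi=1$, so $\varphi$ is an isomorphism, $G=I^*_{120}$, and $\varphi\in\mathrm{Aut}(I^*_{120})$.

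Next I would show there are exactly two automorphisms up to conjugacy. The center $\langle a^2\rangle$ is characteristic, so every automorphism descends to $I^*_{120}/\langle a^2\rangle\cong A_5$, giving a map $\mathrm{Aut}(I^*_{120})\to\mathrm{Aut}(A_5)\cong S_5$; this map is injective because $I^*_{120}$ is perfect (its abelianization is trivial, by Table~\ref{char_subgp}), and its image contains $\mathrm{Inn}(I^*_{120})\cong A_5=\mathrm{Inn}(A_5)$, whence $\mathrm{Out}(I^*_{120})\cong\bbZ_2$. It then remains to check that $\eta$ is a genuine, non-inner automorphism: one verifies that $\eta$ respects the relations $a^2=b^3=(ab)^5$, $a^4=1$, and that it is not inner by observing from Lemma~\ref{conjTi120} that $\eta(ba)=\eta(b)\eta(a)=b^{-1}abab^{-1}$ lies in the order-$10$ class $[baba^{-1}]$, whereas $ba\in[ba]$, and these classes are distinct. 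Since inner automorphisms fix conjugacy classes setwise, $\eta$ represents the non-trivial element of $\mathrm{Out}(I^*_{120})$. As $\dg$ depends on the induced homomorphism only up to composition with inner automorphisms (a basepoint change, i.e.\ a free homotopy, by \cite{Olum1,Olum2}), it suffices to compute $\dg(\mathrm{id})$ and $\dg(\eta)$.

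The identity has degree $1$. For $\eta$, realize it by $f\colon S^3/I^*_{120}\to S^3/I^*_{120}$ and restrict to maximal cyclic subgroups. Whenever $H$ is cyclic and $\eta(H)$ is conjugate to some $H'$, the map $f$ lifts to $\tilde f\colon S^3/H\to S^3/H'$ between covering lens spaces; both covers have degree $120/|H|$, so $\deg f=\deg\tilde f$, and $\deg\tilde f$ is read off from Lemmas~\ref{alllensspace} and~\ref{deglenspace}. Taking $H=\langle a\rangle\cong\bbZ_4$ (with $a\mapsto a^{-1}$, so $L(4;3,5)\to L(4;3,5)$) gives $\dg(\eta)\equiv1\ (\mathrm{mod}\ 4)$; taking $H=\langle b\rangle\cong\bbZ_6$ (with $\eta(b)\in[b]$, so $L(6;5,1)\to L(6;5,1)$) gives $\dg(\eta)\equiv1\ (\mathrm{mod}\ 6)$; and taking $H=\langle ba\rangle\cong\bbZ_{10}$, whose image lies in the class $[baba^{-1}]$ so that $\tilde f\colon L(10;1,1)\to L(10;3,3)$, Lemma~\ref{deglenspace} yields $\dg(\eta)\equiv 9l^2\ (\mathrm{mod}\ 10)$ for the induced power $l$; tracking the conjugacy (or using that $\eta^2$ is inner) shows $l\equiv\pm1$, giving $\dg(\eta)\equiv9\ (\mathrm{mod}\ 10)$. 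By the Chinese remainder theorem these congruences force $\dg(\eta)\equiv49\ (\mathrm{mod}\ 60)$.

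The main obstacle is the remaining factor of $2$: the congruences above leave exactly the ambiguity between $49$ and $109$ modulo $120$, and this cannot be removed by cyclic restriction, because the Sylow $2$-subgroup of $I^*_{120}$ is the non-cyclic group $Q_8$, so no cyclic cover detects $\dg(\eta)$ modulo $8$. Moreover the mod-$2$ cohomology argument used for $O^*_{48}$ (via \cite{TZ}) is unavailable here, since the trivial abelianization gives $H^1(S^3/I^*_{120};\bbZ_2)=0$. To settle it I would restrict $f$ to the $Q_8$-cover: $\eta$ permutes the Sylow $2$-subgroups, so $f$ lifts to a self-map of the quaternionic space form $S^3/Q_8$ realizing $\eta|_{Q_8}$, and the degree of such a map modulo $8$ is supplied by the computations in \cite{HKWZ}; this gives $\dg(\eta)\equiv1\ (\mathrm{mod}\ 8)$ and hence $\dg(\eta)\equiv49\ (\mathrm{mod}\ 120)$, as claimed.
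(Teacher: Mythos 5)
Your proposal is correct, and it is considerably more self-contained than what the paper offers: the paper states this theorem with no proof at all, relying on the sentence preceding it (that the only proper quotient of $I^*_{120}$ is trivial, via Lemma~\ref{subgroupI120}) for the group-theoretic part and, by analogy with the adjacent $O^*_{48}$ theorem, on the tables of \cite{HKWZ} for the value $\dg(\eta)\equiv 49 \tmod{120}$. Your reduction to $\ker\varphi=1$ and $\mathrm{Out}(I^*_{120})\cong\bbZ_2$ matches the paper's implicit argument (one small presentational point: you assert $\mathrm{Out}\cong\bbZ_2$ from the embedding into $\mathrm{Aut}(A_5)\cong S_5$ before exhibiting a non-inner automorphism; the embedding only gives $|\mathrm{Out}|\le 2$, and it is your subsequent conjugacy-class check on $\eta(ba)\in[baba^{-1}]$ versus $ba\in[ba]$ that pins it down — the logic is fine, just stated in the wrong order). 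What your degree computation buys is a derivation of $49$ from the paper's own toolkit rather than an external citation: the restrictions to $\langle a\rangle$, $\langle b\rangle$, $\langle ba\rangle$ with Lemmas~\ref{alllensspace} and~\ref{deglenspace} give $\dg(\eta)\equiv 1\tmod{4}$, $1\tmod{3}$, $9\tmod{10}$ (I checked the class memberships against Lemma~\ref{conjTi120}: $b^{-1}abab^{-1}a\in[b]$ and $b^{-1}abab^{-1}\in[baba^{-1}]$, as you use), and your diagnosis that this leaves exactly the ambiguity $49$ versus $109$ is accurate, as is the observation that the $H^1(\,\cdot\,;\bbZ_2)$ trick from the $O^*_{48}$ case is unavailable since $I^*_{120}$ is perfect. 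Your resolution via the $Q_8$-cover — lift $f$ to $S^3/Q_8\to S^3/\eta(Q_8)$ between conjugate Sylow $2$-covers and invoke $\dg\equiv 1\tmod{8}$ for automorphisms of $Q_8$ from \cite{HKWZ} — is exactly the device the paper itself uses in the proofs of Lemmas~\ref{selfdegreeD4n} and \ref{selfdegreeT'}, so the argument is sound and stylistically consistent with the rest of the paper. The only remaining loose end is the routine verification that $a\mapsto a^{-1}$, $b\mapsto b^{-1}abab^{-1}a$ actually preserves the relations $a^2=b^3=(ab)^5$, $a^4=1$, which you flag but do not carry out.
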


\begin{Lemma}\label{selfdegreeT'}
Each endomorphism of $T'_{8\cdot 3^q}$ is conjugate to a composition of some endomorphisms in the following table
\begin{table}[h]
\begin{center}
\begin{tabular}{|c|lr|l|l|l|l|l|l|}
\hline
$\psi$ & $\ker \psi$ & & $\psi(a)$ & $\psi(w)$ &   $\im \psi$ &  $\dg(\psi)$
 \\ \hline
$\psi_1$ & $1$ &  &  $b^{-1}$ & $w^2$   &  $T'_{8\cdot 3^q}$ & $4-3^{2q+1}$ \\ \hline
$\psi_{2}$ & $\zhclass{b, a, w^{3^{q-1}}}\cong D^*_{4\cdot 2}\times \bbZ_{3}$ &
 &  $1$ & $w^{3}$   &  $\zhclass{w^{3}}\cong \bbZ_{3^{q-1}}$ &
 $9-3^{2q+2}$   \\ \hline
$\psi_3$ & $\zhclass{b, a}\cong D^*_{4\cdot 2}$ &
  &  $1$ & $w$   &  $\zhclass{w}\cong \bbZ_{3^{q}}$ &
 $1-3^{2q}$   \\ \hline
$\psi_4$ & $T'_{8\cdot 3^q}$ &  &  $1$ & $1$   &  $1$ & $0$   \\ \hline
\end{tabular}
\end{center}
\caption{endomorphisms of $T'_{8\cdot 3^q}$}
\end{table}
\end{Lemma}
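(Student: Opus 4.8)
The plan is to first classify all endomorphisms of $T:=T'_{8\cdot 3^q}$ up to conjugacy, identify them as conjugates of compositions of $\psi_1,\dots,\psi_4$, and then compute the four degrees by a Chinese-Remainder argument splitting $|T|=8\cdot 3^q$ into its coprime $2$- and $3$-parts.

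\textbf{Classification.} Every endomorphism $\psi$ sends the commutator subgroup $[T,T]=\zhclass{a,b}\cong Q_8$ into itself, so it restricts to an endomorphism of $Q_8$ and descends to $\bar\psi$ on $T^{ab}=\zhclass{\bar w}\cong\bbZ_{3^q}$. The decisive observation is that $\im\psi\cong T/\ker\psi$ has abelianization $(\im\psi)^{ab}\cong T/\zhclass{\ker\psi,[T,T]}$, a quotient of $\bbZ_{3^q}$ and hence a $3$-group. Scanning Lemma~\ref{subgroupT8}, the only subgroups of $T$ with $3$-group abelianization are $T$ itself, the cyclic groups $\zhclass{w^{3^r}}\cong\bbZ_{3^{q-r}}$, and $1$ (the subgroups $Q_8\times\bbZ_{3^{q-r}}$ are excluded since their abelianization contains $\bbZ_2\oplus\bbZ_2$). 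Thus $\im\psi$ is all of $T$, a cyclic $3$-group, or trivial, matching the three nontrivial rows of the table.

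\textbf{Reduction to the four generators.} If $\im\psi=T$ then $\psi$ is an automorphism, and the induced map $\mathrm{Aut}(T)\to\mathrm{Aut}(\bbZ_{3^q})=(\bbZ/3^q)^*$ carries $\psi_1$ to multiplication by $2$, a primitive root mod $3^q$; so modulo the kernel of this map every automorphism is a power $\psi_1^k$. If $\im\psi$ is a cyclic $3$-group then, the image being abelian, $\psi$ kills $[T,T]$, forcing $\psi(a)=1$ and $\psi(w)=w^e$; writing $e=3^r u$ with $\gcd(u,3)=1$ and using $\psi_1^j$ to absorb the unit $u$, one gets $\psi=\psi_2^{\,r}\circ\psi_3\circ\psi_1^{\,j}$, with $\psi_4=\psi_2^{\,q}\circ\psi_3$ as the degenerate case. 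This exhibits every endomorphism as a conjugate of a composition of the four listed maps.

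\textbf{Degrees.} I will use that whenever $\psi(H)\subseteq H$, a single equivariant lift $\tilde g\colon S^3\to S^3$ realizing $\psi$ descends both to $S^3/T$ and to $S^3/H$ with the same degree $\deg\tilde g$, so $\dg(\psi)\equiv\dg(\psi|_H)\pmod{|H|}$. Taking $H$ to be the $2$-Sylow $Q_8$ and the $3$-Sylow $\zhclass{w}\cong\bbZ_{3^q}$ (coprime orders $8$ and $3^q$) determines $\dg(\psi)\bmod 8\cdot 3^q$ by CRT. For the $2$-part, an automorphism of $Q_8$ contributes $1$ by \cite{HKWZ}, while the trivial endomorphism factors through the universal cover and contributes $0$. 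For the $3$-part, on the lens space $S^3/\zhclass{w}$ the self-map $w\mapsto w^l$ has degree $\equiv l^2\pmod{3^q}$ by Lemma~\ref{deglenspace} (with $m_1=m_2=3^q$, where the $r_{ij}$ cancel). Hence $\psi_1$ ($l=2$, a $Q_8$-automorphism) gives $1\bmod 8$ and $4\bmod 3^q$, i.e. $4-3^{2q+1}$; $\psi_3$ ($l=1$, trivial on $Q_8$) gives $0\bmod 8$ and $1\bmod 3^q$, i.e. $1-3^{2q}$; $\psi_2$ ($l=3$) gives $0\bmod 8$ and $9\bmod 3^q$, i.e. $9-3^{2q+2}$; and $\psi_4$ gives $0$.

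\textbf{Main obstacle.} The routine parts are verifying the relations so that $\psi_1,\dots,\psi_4$ are well defined and the lens-space bookkeeping in Lemma~\ref{alllensspace}. The genuine difficulty is the automorphism case: I must normalize the $3$-Sylow (possible up to conjugacy by Sylow's theorem, and $\dg$ is conjugacy-invariant) and, crucially, show that the kernel of $\mathrm{Aut}(T)\to(\bbZ/3^q)^*$ consists of inner automorphisms, so that $\psi_1$ together with conjugation really generates every automorphism and no automorphism degree is overlooked. Pinning down $\mathrm{Out}(T)\cong(\bbZ/3^q)^*$ through the action on $Q_8$ together with the relations $wa=bw$ and $wb=abw$ is the crux of the argument.
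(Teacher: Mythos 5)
Your overall architecture matches the paper's: classify endomorphisms by their kernel/image using Lemma~\ref{subgroupT8}, then compute each degree by restricting to the two coprime invariant subgroups $\langle b,a\rangle\cong Q_8$ and $\langle w\rangle\cong\bbZ_{3^q}$ and gluing by the Chinese Remainder Theorem. Your image-based classification (only subgroups with $3$-group abelianization can occur as $\im\psi$) is a clean variant of the paper's kernel-based one, and your direct CRT computation of $\dg(\psi_2)$ and $\dg(\psi_3)$ is more self-contained than the paper, which cites \cite{Du} for those two entries; the numerical checks ($1\bmod 8$, $4\bmod 3^q$ giving $4-3^{2q+1}$, etc.) all come out right. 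The reduction of the abelian-image case to $\psi_2^{\,r}\circ\psi_3\circ\psi_1^{\,j}$ via the primitive root $2$ is also sound.

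The genuine gap is the one you flag yourself and then do not close: the claim that every automorphism of $T'_{8\cdot 3^q}$ is, up to conjugation, a power of $\psi_1$ — equivalently that the kernel of $\mathrm{Aut}(T)\to\mathrm{Aut}(T^{ab})\cong(\bbZ/3^q)^*$ is exactly $\mathrm{Inn}(T)$. Without this, the statement ``each endomorphism is conjugate to a composition of the listed maps'' is unproved in the automorphism case, and an overlooked outer automorphism could contribute a degree not of the form $\dg(\psi_1^r)$. This is not a formality: a priori an automorphism could fix $w$ modulo $[T,T]$ while acting on $Q_8$ by any of the $24$ automorphisms of $Q_8$, and one must use the relations $waw^{-1}=b$, $wbw^{-1}=ab$ to cut this down. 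The paper closes the gap by a finite, conjugacy-normalized case check: since the order-$3^q$ elements fall into the two classes $[w^{3t+1}]$, $[w^{3t+2}]$ and the order-$4$ elements into orbits represented by $a$ and $b^{-1}$ under the residual conjugation (Lemma~\ref{conjT8}), one may assume $\psi(w)\in\{w^{3t+1},w^{3t+2}\}$ and $\psi(a)\in\{a,b^{-1}\}$; then $\psi(b)=\psi(w)\psi(a)\psi(w)^{-1}$ is forced, and testing $\psi(wbw^{-1})=\psi(ab)$ kills two of the four combinations, while the surviving two are exactly $\psi_1^r$ with $2^r\equiv 3t+1$ or $3t+2\pmod{3^q}$. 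You need to carry out this (or an equivalent) verification before your ``reduction to the four generators'' is complete.
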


\begin{proof}
By Lemma~\ref{subgroupT8}, the non-trivial normal subgroups of $T'_{8\cdot 3^q}$ are $\bbZ_{3^{q-r}}=\zhclass{w^{3^{r}}}$, $\bbZ_{2\cdot 3^{q-r}}=\zhclass{b^2w^{3^{r}}}$, or $D^*_{4\cdot 2}\times \bbZ_{3^{q-r}}=\zhclass{b,a, w^{3^{r}}}$ with $0<r\le q$. Since neither $T'_{8\cdot 3^q}/\zhclass{w^{3^{r}}}$ nor $T'_{8\cdot 3^q}/\zhclass{b^2w^{3^{r}}}$ is a subgroup of $T'_{8\cdot 3^q}$ except for the trivial case $T'_{8\cdot 3^q}/\zhclass{w^{3^{q}}} = T'_{8\cdot 3^q}$, any endomorphism on $T'_{8\cdot 3^q}$ has kernel $1$, $D^*_{4\cdot 2}\times \bbZ_{3^{q-r}}$, or $T'_{8\cdot 3^q}$.

(1) $\ker \psi=1$, i.e. $\psi$ is an automorphism. Note that $\psi(w)$ has order $3^q$. By Lemma~\ref{conjT8}, we may assume that $\psi(w)=w^{3t+1}$ or $w^{3t+2}$. Since $a$ has order $4$, $\psi(a)$ has also order $4$, and hence $\psi(a)\in \{b, a, ba, b^{-1}, b^2a, b^{-1}a\}$. Note that $waw^{-1}=b$, $wbw^{-1}=ab=b^{-1}a$, $wb^2aw^{-1}=b^{-1}$ and  $wb^{-1}w^{-1}=ba$. Up to a conjugation, we may also assume that $\psi(a)=a$ or $b^{-1}$. There four subcases:

(1.1) $\psi(w)=w^{3t+1}$ and $\psi(a)=a$. It follows that $\psi(b)=\psi(waw^{-1})=\psi(w)\psi(a)\psi(w^{-1})=b$. From number theory, we know that the set $\bbZ^*_{3^q}$ of all multiple invertible elements in $\bbZ_{3^q}$ is a cyclic group of order $3^q-3^{q-1}$, i.e. $\bbZ^*_{3^q}\cong \bbZ_{3^q-3^{q-1}}$. Moreover, $2$ is a generator of $\bbZ^*_{3^q}$. Thus, $\psi = \psi_1^r$, where $2^r\equiv 3t+1\tmod{3^q}$.

(1.2) $\psi(w)=w^{3t+1}$ and $\psi(a)=b^{-1}$. It follows that $\psi(b)=\psi(waw^{-1})=\psi(w)\psi(a)\psi(w^{-1})=ba$. Hence, $\psi(wbw^{-1}) = w^{3t+1}baw^{-(3t+1)} = abb=b^2a$, but $\psi(ab)=b^{-1}ba=a$. Thus, this is an impossible case.

(1.3) $\psi(w)=w^{3t+2}$ and $\psi(a)=a$. It follows that $\psi(b)=\psi(waw^{-1})=\psi(w)\psi(a)\psi(w^{-1})=b^{-1}a$. Hence, $\psi(wbw^{-1}) = w^{3t+2}b^{-1}aw^{-(3t+2)} = a^{-1}ab=b$, but $\psi(ab)=a b^{-1}a=ba^2=b^3$. Thus, this is an impossible case.

(1.4) $\psi(w)=w^{3t+2}$ and $\psi(a)=b^{-1}$. It follows that $\psi(b)=\psi(waw^{-1})=\psi(w)\psi(a)\psi(w^{-1})=b^2a$. Here, $\psi = \psi_1^k$, where $2^k\equiv 3t+2\tmod{3^q}$.

(2) $\ker \psi = D^*_{4\cdot 2}\times \bbZ_{3^{q-r}}$ with $0<r<q$. Then $\psi(a)=\psi(b)=1$.  Since $\psi(w^{3^r})=1$, the order of $\psi(w)$ is a divisor of $3^r$. By Lemma~\ref{conjT8}, up to a conjugation, we may also assume that $\psi(w) = w^{3t}$. Note that $2$ is a generator of $\bbZ^*_{3^q}$. There is some $s$ such that $3t\cdot 2^s\equiv 3^r\tmod{3^q}$. Thus, $\psi_1^s\psi(w)=w^{3^r}$. It follows that $\psi=\psi_1^{-s}\psi_{2}^r$ if $r>0$; $\psi=\psi_2^{-s}\psi_3$ ir $r=0$. The degree of $\psi_{2}, \psi_3$ were given in \cite[p.1250, Case VI]{Du}.

(3) $\ker\psi = T'_{8\cdot 3^q}$. Clearly, $\psi=\psi_4$.

Consider the degree of $\psi_1$. Since $\psi_1|_{\zhclass{b,a}}$ is an automorphism on $Q_8$, we have that $\dg(\psi_1|_{\zhclass{b,a}}) \equiv 1 \tmod{8}$ by \cite{HKWZ}. Thus, $\dg(\psi_1)\equiv 1 \tmod{8}$. On the other hand,
By Lemma~\ref{deglenspace}, we have that $\dg(\psi_1|_{\zhclass{w}}) \equiv 2^2 \tmod{3^q}$. We obtain that $\dg(\psi_1)\equiv  4-3^{2q+1} \tmod{8\cdot 3^q}$.
\end{proof}

\begin{corollary}
The group $Out(T'_{8\cdot 3^q}) \cong \bbZ_{3^q-3^{q-1}}$ is generated by $\eta$, which is defined by  $\eta(a) = b^{-1}$ and $\eta(w) = w^2$.
\end{corollary}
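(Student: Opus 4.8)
The plan is to extract both the generation and the order of $\mathrm{Out}(T'_{8\cdot 3^q})$ directly from the endomorphism classification in Lemma~\ref{selfdegreeT'} together with the abelianization recorded in Table~2. First I would observe that $\mathrm{Out}(T'_{8\cdot 3^q})$ is cyclic, generated by the class of $\eta=\psi_1$. This is already implicit in the proof of Lemma~\ref{selfdegreeT'}: among the automorphisms ($\ker\psi=1$), subcases (1.2) and (1.3) are impossible, while (1.1) and (1.4) show that every automorphism, after composing with a suitable inner automorphism, equals a power $\psi_1^r$. Passing to $\mathrm{Aut}/\mathrm{Inn}$, the class $[\eta]$ therefore generates the whole group.

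Next I would pin down the order of $[\eta]$, starting with an upper bound obtained by computing the order of $\eta$ in $\mathrm{Aut}$ itself. Writing $N=3^q-3^{q-1}=2\cdot 3^{q-1}$, I would check that $\eta^2$ fixes both $a$ and $b$ while sending $w\mapsto w^4$: using $\eta(a)=b^{-1}$, $\eta(b)=b^2a$ and $\eta(w)=w^2$ together with the relations in $\zhclass{a,b}\cong Q_8$, one finds $\eta^2(a)=a$, $\eta^2(b)=b$, so inductively every even power satisfies $\eta^{2m}(a)=a$, $\eta^{2m}(b)=b$ and $\eta^{2m}(w)=w^{2^{2m}}$. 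Since $N$ is even and $2$ generates $\bbZ^*_{3^q}$ (a cyclic group of order $N$, as already used in Lemma~\ref{selfdegreeT'}), we have $2^N\equiv 1\tmod{3^q}$, whence $\eta^N=\mathrm{id}$. Thus $[\eta]$ has order dividing $N$ in $\mathrm{Out}$.

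For the matching lower bound I would use that the abelianization of $T'_{8\cdot 3^q}$ is $\zhclass{w}\cong\bbZ_{3^q}$ (Table~2). Every inner automorphism acts trivially on the abelianization, so there is a well-defined homomorphism $\mathrm{Out}(T'_{8\cdot 3^q})\to\mathrm{Aut}(\bbZ_{3^q})=\bbZ^*_{3^q}$ sending $[\eta]$ to the class of $2$ coming from $\eta(w)=w^2$. Because $2$ has order $N$ in $\bbZ^*_{3^q}$, the order of $[\eta]$ is at least $N$; indeed if $\eta^r\in\inn$ then $\eta^r$ induces the identity on $\bbZ_{3^q}$, forcing $2^r\equiv 1\tmod{3^q}$ and hence $N\mid r$. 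Combining the two bounds gives that $[\eta]$ has order exactly $N$, so $\mathrm{Out}(T'_{8\cdot 3^q})=\zhclass{[\eta]}\cong\bbZ_{3^q-3^{q-1}}$.

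The only genuinely delicate point is the explicit verification that $\eta^2$ is the ``identity on $Q_8$'' automorphism $w\mapsto w^4$, which is what makes $\eta^N=\mathrm{id}$ rather than merely $\eta^N$ inner; once this is confirmed, the remaining ingredients — surjectivity onto $\bbZ^*_{3^q}$ and the primitivity of $2$ modulo $3^q$ — are the standard facts already invoked in the proof of Lemma~\ref{selfdegreeT'}.
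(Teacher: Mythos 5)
Your proof is correct and follows the route the paper evidently intends, since the corollary is stated without proof as a direct consequence of Lemma~\ref{selfdegreeT'}: cases (1.1) and (1.4) of that lemma's proof show every automorphism is a power of $\psi_1=\eta$ up to an inner automorphism, and $2$ generates $\bbZ^*_{3^q}\cong\bbZ_{3^q-3^{q-1}}$. The one ingredient you supply that the paper leaves implicit --- the lower bound on the order of $[\eta]$ in $\mathrm{Out}$ via the induced action on the abelianization $\langle w\rangle\cong\bbZ_{3^q}$, together with the check that $\eta^2$ fixes $a$ and $b$ (using $\eta(b)=b^2a=a^{-1}$ in $Q_8$) so that $\eta^{N}=\mathrm{id}$ exactly rather than merely up to inner automorphisms --- is a genuine and necessary addition, and your verification of it is accurate.
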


\begin{theorem}
Let $\varphi$ be a surjective non-trivial homomorphism from $T'_{8\cdot 3^q}$ to $G$ induced by a map from $S^3/T'_{8\cdot 3^q}$ to some $3$-dimensional spherical manifold $S^3/G$. Then $\varphi = \eta'\psi \eta''$, where $\eta''\in Aut(T'_{8\cdot 3^q})$ and $\eta'\in Aut(G)$. Moreover, all possible $G$ and $\psi$ are listed as follows:
\begin{center}
\begin{tabular}{|l|l|l|l|l|l|l|}
\hline
$G$ & $S^3/G$ & $\psi(a)$ & $\psi(w)$ &    $\dg(\psi)$ \\ \hline
$T'_{8\cdot 3^r}$ & $S^3/T'_{8\cdot 3^r}$
 & $\bar a$ & $\bar w$ &  $3^{q-r}$ \\ \hline
$\bbZ_{3^{r}}$ & $L(3^{q-r}; 2-3^{q-1}, 2+3^{q-1})$
 & $1$ & $c$ &  $\frac{1}{8}\times (3-3^{2q+1})^r$ \\ \hline
\end{tabular}
\end{center}
\end{theorem}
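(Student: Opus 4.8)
The plan is to follow the template of the three preceding theorems: first classify the admissible quotients, then put an arbitrary surjection into a normal form, and finally read off the degree of each normal form from the endomorphism table (Lemma~\ref{selfdegreeT'}) together with Lemma~\ref{degfromself} and Lemma~\ref{deglenspace}. Since $\varphi$ is surjective, $G\cong T'_{8\cdot 3^q}/\ker\varphi$, so I would run through the normal subgroups recorded in the proof of Lemma~\ref{selfdegreeT'} (coming from Lemma~\ref{subgroupT8}): $\zhclass{w^{3^r}}$, $\zhclass{b^2w^{3^r}}$, $\zhclass{b,a,w^{3^r}}$, the trivial subgroup and the whole group, with quotients $T'_{8\cdot 3^r}$, $(\bbZ_2\times\bbZ_2)\rtimes\bbZ_{3^r}$, a cyclic $3$-group, $T'_{8\cdot 3^q}$ and $1$ respectively. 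Discarding the trivial quotient and $(\bbZ_2\times\bbZ_2)\rtimes\bbZ_{3^r}$ (which contains a non-cyclic $\bbZ_2\times\bbZ_2$, hence cannot act freely on $S^3$ and is not a spherical fundamental group; for $r=1$ it is $A_4$) leaves exactly the two families listed.

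For the normal form $\varphi=\eta'\psi\eta''$ I would use that both surviving kernels are characteristic: the center $\zhclass{b^2,w^3}$ is cyclic, so $\zhclass{w^{3^r}}$ is its unique subgroup of that order and hence characteristic, while $\zhclass{b,a,w^{3^r}}=[T'_{8\cdot 3^q},T'_{8\cdot 3^q}]\times\zhclass{w^{3^r}}$ is a product of characteristic subgroups. Thus $\ker\varphi$ is determined by the isomorphism type of $G$; any two surjections sharing this kernel differ by post-composition with an automorphism of $G$, which supplies $\eta'$, and $\eta''\in\mathrm{Aut}(T'_{8\cdot 3^q})$ is available to match generators so that $\psi$ becomes the displayed projection (in fact $\eta''$ may be taken to be the identity here).

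For the cyclic target I would sidestep a direct computation by recognizing $\psi$ as the surjective factor of a power $\psi_2^{\,r}$ of the endomorphism $\psi_2$ of Lemma~\ref{selfdegreeT'}, whose image $\zhclass{w^{3^r}}\cong\bbZ_{3^{q-r}}$ is the relevant cyclic $3$-group. Writing $\psi_2^{\,r}=\iota\circ\xi\circ\psi$, with $\xi$ the isomorphism onto $\zhclass{w^{3^r}}$ and $\iota$ the inclusion, Lemma~\ref{degfromself} gives $\dg(\psi_2^{\,r})=\dg(\xi)\,\dg(\psi)\,\tfrac{|T'_{8\cdot 3^q}|}{3^{q-r}}=8\cdot 3^{r}\,\dg(\psi)$, where $\dg(\xi)=1$ because the target lens space is realized as $S^3/\zhclass{w^{3^r}}=L(3^{q-r};2-3^{q-1},2+3^{q-1})$, on which $\xi$ induces the identity. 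Combining this with multiplicativity $\dg(\psi_2^{\,r})=\dg(\psi_2)^r=(9-3^{2q+2})^r$ yields $\dg(\psi)=\tfrac18(3-3^{2q+1})^r$, matching the table (the target here being $\bbZ_{3^{q-r}}$, as the lens space and degree columns indicate).

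The hard part will be the target $T'_{8\cdot 3^r}$, where no shortcut is available: it is not a subgroup of $T'_{8\cdot 3^q}$, so there is no covering map, and Lemma~\ref{degfromself} only returns the tautology $\dg(\psi)=\dg(\psi)$. Here I would compute the degree one prime at a time. Restricting $\psi$ to the $3$-Sylow subgroup $\zhclass{w}\cong\bbZ_{3^q}$ gives the lens-space map $S^3/\zhclass{w}\to S^3/\zhclass{\bar w}$, whose degree is $\equiv 3^{q-r}\pmod{3^r}$ by Lemma~\ref{deglenspace} with $l=1$; restricting to the $2$-Sylow subgroup $Q_8=\zhclass{b,a}$ gives an isomorphism onto $\bar Q_8$, of degree $\equiv 1\pmod 8$ by \cite{HKWZ}. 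The Chinese Remainder Theorem then assembles $\dg(\psi)$ modulo $8\cdot 3^r$ from these two residues, with $3^{q-r}$ recording the $3$-primary part. The principal obstacle is precisely to justify this primewise restriction principle---that reduction of $\dg(\psi)$ modulo the $p$-part of $|G|$ is computed by the restriction of $\psi$ to a $p$-Sylow subgroup---and then to reconcile the $2$- and $3$-primary contributions into a single residue class.
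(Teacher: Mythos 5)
Your overall strategy coincides with the paper's: the same enumeration of normal subgroups, the same elimination of $(\bbZ_2\times\bbZ_2)\rtimes\bbZ_{3^r}$, the same identification of the cyclic-quotient case with the surjective factor of a power of $\psi_2$ via Lemma~\ref{degfromself} (your indexing by $r$ rather than $q-r$ is the one consistent with the degree column of the table), and, for the target $T'_{8\cdot 3^r}$, restriction to $\zhclass{w}$ and to $\zhclass{b,a}\cong Q_8$ followed by a Chinese Remainder assembly. Your characteristic-subgroup argument for the normal form $\varphi=\eta'\psi\eta''$ is a sound addition that the paper leaves implicit.

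The genuine gap sits exactly in the step you flag as the principal obstacle, and the ``primewise restriction principle'' as you state it is false. If $H\le G_1$ and $\psi(H)\le K\le G_2$, the lift $\tilde f\colon S^3/H\to S^3/K$ satisfies $[G_2:K]\deg\tilde f=[G_1:H]\deg f$, so $\dg(\psi|_H)\equiv \frac{[G_1:H]}{[G_2:K]}\,\dg(\psi) \tmod{|K|}$: the restriction computes $\dg(\psi)$ modulo $|K|$ only after dividing by the ratio of covering indices, not on the nose. For $H=\zhclass{w}$ this ratio is $8/8=1$, so $\dg(\psi)\equiv 3^{q-r}\tmod{3^r}$ as you say; but for $H=\zhclass{b,a}$ the ratio is $3^q/3^r=3^{q-r}$, so the correct congruence is $3^{q-r}\dg(\psi)\equiv 1\tmod{8}$, i.e.\ $\dg(\psi)\equiv[3^{q-r}]^{-1}_{(8)}\equiv 3^{q-r}\tmod{8}$ (using $3^2\equiv 1\tmod{8}$). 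Your stated principle would instead force $\dg(\psi)\equiv 1\tmod{8}$, which contradicts the answer $3^{q-r}$ whenever $q-r$ is odd, since then $3^{q-r}\equiv 3\tmod{8}$. The factor $[3^{q-r}]^{-1}_{(8)}$ in the paper's explicit CRT formula is precisely this index correction; without it your assembly of the two residues lands in the wrong class modulo $8$.
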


\begin{proof}

(1) $\ker \psi = \zhclass{w^{3^r}}$, the homomorphism $\psi: T'_{8\cdot 3^q}\to T'_{8\cdot 3^r}$ can be given by $\psi(b) = \bar b$, $\psi(a) = \bar a$ $\psi(w) = \bar w$. By Lemma~\ref{alllensspace},   $S^3/\zhclass{w}$ and $S^3/\zhclass{\bar w}$ are respectively Lens spaces $L(3^q; 2-3^q, 2+3^q)$ and $L(3^r; 2-3^r, 2+3^r)$. By Lemma~\ref{deglenspace},
$$\begin{array}{rcl}
\dg \psi|_{\zhclass{w}}
 & = & (2-3^{r-1})(2+3^{r-1})3^{q-r}[2-3^{q-1}]^{-1}_{(3^q)}[2+3^{q-1}]^{-1}_{(3^q)}\\
 & = & 3^{q-r}(2-3^{r-1})(2+3^{r-1})\frac{1-3^{q-1}}{2}\frac{1-3^{q-1}}{2}\\
 & \equiv & 3^{q-r} \tmod{3^r}.
\end{array}
$$
Since $\psi|_{\zhclass{b, a}}$ is an isomorphism, $\dg \psi|_{\zhclass{b, a}}\equiv 1 \tmod{8}$. It follows that
$$
\begin{array}{rcl}
\dg \psi
& = & 3^r(4-3^{r+1})[3^{q-r}]^{-1}_{(8)}+3^{q-r}(1-4\cdot 3^r+3^{2r+1})\\
& = & 3^r(4-3^{r+1})(4-3^{q-r+1})+3^{q-r}(1-4\cdot 3^r+3^{2r+1})\\
& \equiv & 3^{q-r} \tmod{8\cdot 3^r}
\end{array}
$$

(2) $\ker \psi =\bbZ_{2\cdot 3^{q-r}}$. Since its quotient $(\bbZ_2\times\bbZ_2)\rtimes \bbZ_{3^r}$ is not a subgroup in our list, this is an impossible case.

(3) $\ker \psi = D^*_{4\cdot 2}\times \bbZ_{3^{q-r}}$ ($0<r\le q$). Then $G=Im \psi =\bbZ_{3^r}=\zhclass{c\mid c^{3^r}}$. The homomorphism $\psi: T'_{8\cdot 3^r}\to \bbZ_{3^r}$ can be given by $\psi(a) = 1$ and $\psi(w) =c$. By identification  $\iota: G\to \zhclass{w^{3^{q-r}}}$, we have that $\iota \psi = \psi_2^{q-r}$, where $\psi_2^r$ is listed in Lemma~\ref{selfdegreeT'}. Since $\dg \psi_2^{q-r}=(9-3^{2q+2})^{q-r}$, we obtain that $\dg \psi  = (9-3^{2q+2})^{q-r}/(8\cdot 3^{q-r}) = \frac{1}{8}\times (3-3^{2q+1})^{q-r} \tmod{3^{r}}$.
\end{proof}

\begin{Lemma}\label{selfdegreeD'}
Each endomorphism of $D'_{n\cdot 2^q}$ is conjugate to a composition of some endomorphisms in the following table
\begin{center}
\begin{tabular}{|c|lr|l|l|l|l|l|l|}
\hline
$\psi$ & $\ker \psi$ & & $\psi(u)$ & $\psi(w)$ &   $\im \psi$ &  $\dg(\psi)$
 \\ \hline
$\psi_{1,s}$ & $1$ &  $(s,n)=1, 1\le s <\frac{n}{2}$&  $u^{s}$ & $w$   &  $D'_{n\cdot 2^q}$ &
 $(1-n^{2^{q-1}}) s^2+ n^{2^{q-1}}$  \\ \hline
$\psi_{2,t}$ & $1$ &  $(t,2)=1$&  $u$ & $w^t$   &  $D'_{n\cdot 2^q}$ &
 $(1-n^{2^{q-1}})+ n^{2^{q-1}} t^2$   \\ \hline
$\psi_{3,1}$ & $\zhclass{u} \cong \bbZ_{n}$ &   &  $1$ & $w$   &  $\bbZ_{2^q}$ &
 $(1-n^{2^{q-1}})n^2+ n^{2^{q-1}}$   \\ \hline
$\psi_{3,k}$ & $\zhclass{u^k}\cong \bbZ_{\frac{n}{k}}$ &  $k|n, 1<k<n$&  $u^{\frac{n}{k}}$ & $w$   &  $D'_{k\cdot 2^q}$ &
 $(1-n^{2^{q-1}})\frac{n^2}{k^2}+ n^{2^{q-1}}$   \\ \hline
$\psi_{4}$ & $\zhclass{u, w^{2^{q-1}}}\cong \bbZ_{2n}$ &   &  $1$ & $w^{2}$
  &  $\bbZ_{2^{q-1}}$ &  $4n^{2^{q-1}}$  \\ \hline
\end{tabular}
\end{center}
\end{Lemma}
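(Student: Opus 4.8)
The plan is to follow the same two‑stage strategy used for $D^*_{4n}$ and $T'_{8\cdot 3^q}$: first classify the endomorphisms by their kernels, and then compute each degree by restricting to suitable invariant cyclic subgroups and invoking Lemma~\ref{deglenspace}. Throughout, $n$ is odd, $n>1$, and $q>1$.

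First I would determine the admissible kernels. Any endomorphism factors as $D'_{n\cdot 2^q}\twoheadrightarrow D'_{n\cdot 2^q}/\ker\psi\xrightarrow{\ \cong\ }\im\psi\hookrightarrow D'_{n\cdot 2^q}$, so $\ker\psi$ must be a normal subgroup whose quotient embeds in $D'_{n\cdot 2^q}$. Running through the normal subgroups of Lemma~\ref{subgroupD2q}, the decisive structural fact is that $D'_{n\cdot 2^q}$ has a \emph{unique} involution, namely $w^{2^{q-1}}$, and (from $wuw^{-1}=u^{-1}$ together with $q>1$) every element inverting $\langle u\rangle$ has order exactly $2^q$. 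Hence $D'_{n\cdot 2^q}$ contains neither a dihedral group $\bbZ_n\rtimes\bbZ_2$ with $n>1$ nor any $D'_{n'\cdot 2^r}$ with $r<q$. This rules out the kernels $\langle w^{2^r}\rangle$ (whose quotients are $\bbZ_n\rtimes\bbZ_2$ or $D'_{n\cdot 2^r}$) and $\langle u^kw^2\rangle$ with $k>1$ (quotient $\bbZ_k\rtimes\bbZ_2$). The surviving possibilities are $\ker\psi=1$, $\ker\psi=\langle u^k\rangle$ with $k\mid n$, $\ker\psi=\langle u,w^{2^r}\rangle$ (cyclic $2$-group quotient), and $\ker\psi=D'_{n\cdot 2^q}$.

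Next I would pin down the generator images. Since $u$ has odd order and $w$ has $2$-power order, $\psi(u)$ has odd order and $\psi(w)$ has $2$-power order; by the conjugacy classes of Lemma~\ref{conjD2q} we may conjugate so that $\psi(u)=u^s$ and $\psi(w)=w^t$, and the relation $uwu=w$ is then automatic. For $\ker\psi=1$ this forces $\gcd(s,n)=1$ and $t$ odd, and conjugation by $w$ identifies $s$ with $-s$, giving $\psi_{1,s}\circ\psi_{2,t}$ with $1\le s<\tfrac{n}{2}$. For $\ker\psi=\langle u^k\rangle$ one obtains $\psi_{3,k}$ (the case $k=1$ being $\psi_{3,1}$) after absorbing automorphisms of domain and image; for $\ker\psi=\langle u,w^{2^r}\rangle$ one has $\psi(u)=1$ and $\psi(w)$ of order $2^r$, which is $\psi_4^{\,q-r}$ up to a factor $\psi_{2,t}$, with $r=q-1$ giving $\psi_4$ itself. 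This exhibits every endomorphism as conjugate to a composition of the four listed families.

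Finally, for the degrees I would use that, in these normal forms, $\psi$ preserves both cyclic subgroups $\langle u\rangle\cong\bbZ_n$ and $\langle w\rangle\cong\bbZ_{2^q}$, whose indices $2^q$ and $n$ in $D'_{n\cdot 2^q}$ are coprime. A self-map $f$ realizing $\psi$ then lifts to self-maps of the covering lens spaces $S^3/\langle u\rangle$ and $S^3/\langle w\rangle$; since each lift has the same degree as $f$ while being determined only modulo the order of the corresponding subgroup, comparing degrees across each covering yields $\deg f\equiv\dg(\psi|_{\langle u\rangle})\tmod{n}$ and $\deg f\equiv\dg(\psi|_{\langle w\rangle})\tmod{2^q}$. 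A lens-space self-map induced by $c\mapsto c^l$ has degree $\equiv l^2$ by Lemma~\ref{deglenspace} (the correction factor $r_1r_2[r_1]^{-1}[r_2]^{-1}$ being automatically $1$), so $\deg f\equiv(\text{exponent of }u)^2\tmod{n}$ and $\equiv(\text{exponent of }w)^2\tmod{2^q}$. Reassembling by the Chinese Remainder Theorem, and observing that $n^{2^{q-1}}$ is the idempotent congruent to $0$ mod $n$ and to $1$ mod $2^q$, reproduces precisely the degrees in the table; alternatively these may be quoted from the relevant case of \cite{Du}. I expect the main obstacle to be the classification step, namely establishing the structural fact that $D'_{n\cdot 2^q}$ has a single involution and that every inverting element has order $2^q$, which is what forbids the dihedral and lower-$2$-level dicyclic quotients and cuts the list of kernels down to those in the table; the degree computation is then routine Chinese-Remainder bookkeeping.
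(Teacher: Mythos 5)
Your proposal is correct and follows essentially the same route as the paper: classify endomorphisms by their kernels using Lemma~\ref{subgroupD2q}, normalize $\psi(u)=u^s$, $\psi(w)=w^t$ via the conjugacy classes of Lemma~\ref{conjD2q}, and compute $\dg(\psi)$ by restricting to the invariant subgroups $\langle u\rangle$ and $\langle w\rangle$, applying Lemma~\ref{deglenspace}, and recombining with the Chinese Remainder Theorem using the idempotent $n^{2^{q-1}}$. Your unique-involution argument excluding the kernels with dihedral or lower-$2$-level dicyclic quotients is a welcome explicit justification of a step the paper's proof leaves implicit; the only slight imprecision is the claim that the relation $wuw^{-1}=u^{-1}$ is ``automatic'' for $\psi(u)=u^s$, $\psi(w)=w^t$ — it in fact forces $t$ odd whenever $s\not\equiv 0$, though your kernel-by-kernel analysis supplies this anyway.
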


\begin{proof}
By Lemma~\ref{subgroupD2q}, $\ker \psi$ must be $1$,  $\zhclass{u^k}$ ($k|n, 1\le k<n$), $\zhclass{uw^{2^r}}$ ($0< r <q$), $\zhclass{u^kw^{2^r}}$ ($k|n, 1< k<n$, $1< r <q$), or $D'_{n\cdot 2^q}$.

(1) $\ker \psi =1$. Since $\psi(u)$ has order $n$ and $\zhclass{u}$ is the commutator and hence is a fully invariant subgroup, $\psi(u)=u^s$ for some $s$ with $(s,n)=1$. Since $\psi(w)$ has order $2^q$, we have $\psi(w)=u^mw^t$ for some odd $t$. Note that $u(u^m w^t)u^{-1}=u^{m+2}w^t$. Up to a conjugation, we may assume that $\psi(w)=w^t$. Thus, $\psi= \psi_{1,s}\psi_{2,t}$. Moreover, since $w(u^s w^t)w^{-1}=u^{-s}$, we may assume that $1\le s <\frac{n}{2}$.

(2) $\ker \psi =\zhclass{u^k}$ with $k|n$ and $1\le k<n$. Then $\psi(u)=u^{\frac{n}{k}}$. Since $\psi(w)$ has order $2^{q}$, by Lemma~\ref{conjD2q}, we have $\psi(w)=w^{t}$ for some odd $t$. Thus, $\psi$ is $\psi_{2,t}\psi_{3,k}$.

(3) $\ker \psi =\zhclass{uw^{2^r}}$ with $0<r<q$. Then $\psi(u)=1$. Since $\psi(w)$ has order $2^r$, by Lemma~\ref{conjD2q}, we have $\psi(w)=w^{t\cdot 2^r}$ for some odd $t$. Thus, $\psi$ is $\psi_{2,t}\psi_{4}^r$ if $0<r<q$.

(4) $\ker\psi = D'_{n\cdot 2^q}$. Clearly, $\psi=\psi_4^q$.

Now we consider the degree $\dg(\psi)$. Note that both $\zhclass{u}$ and $\zhclass{w}$ are invariant subgroups of $\psi$. Assume that $\psi(u)=u^s$ and $\psi(w)=w^t$. By Lemma~\ref{deglenspace}, we have that $\dg(\psi|_{\zhclass{u}})=s^2$ and $\dg(\psi|_{\zhclass{w}})=t^2$. Thus, $\dg(\psi)$ is the unique solution module $n\cdot 2^q$ of the system of congruences:
$$ x \equiv s^2 \tmod{n}, \ \ x \equiv t^2 \tmod{2^q}.$$
By Euler-Fermat theorem, we have that $n^{\varphi(2^q)}=n^{2^{q-1}}\equiv 1 \tmod{2^q}$. Thus, we have
$n\cdot n^{2^{q-1}-1}+2^{-q}(1-n^{2^{q-1}})2^q=1$. By the Chinese remainder theorem, we have
$\dg(\psi) = (1-n^{2^{q-1}}) s^2+n^{2^{q-1}} t^2$.
We obtain the all $\dg(\psi)$'s.
\end{proof}

\begin{corollary} The group $Out(D'_{n\cdot 2^q})$ is isomorphic to
$(\mathbb Z_n^*/\mathbb Z_2)\times \mathbb Z_{2^q}^*$.
\end{corollary}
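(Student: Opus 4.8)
The plan is to first pin down the full automorphism group together with its multiplication law, then locate the inner automorphisms inside it, and finally read off the quotient.

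First I would describe $Aut(D'_{n\cdot 2^q})$ explicitly. By Table~\ref{char_subgp} the commutator subgroup is $\zhclass{u}\cong\bbZ_n$, which is therefore characteristic, so every automorphism $\phi$ satisfies $\phi(u)=u^s$ with $(s,n)=1$. Since $\phi(w)$ must again have order $2^q$, Lemma~\ref{conjD2q} forces $\phi(w)=u^m w^t$ with $t$ odd. Using $wuw^{-1}=u^{-1}$ one checks that the defining relation $uwu=w$ is then preserved automatically, and that $\zhclass{u^s,\,u^m w^t}$ is all of $D'_{n\cdot 2^q}$; hence every such assignment $\phi_{s,m,t}$ is a genuine automorphism, with $s\in\bbZ_n^*$, $m\in\bbZ_n$ and $t\in\bbZ_{2^q}^*$.

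Next I would compute the composition law. The one non-routine identity is $(u^m w^t)^{t'}=u^m w^{tt'}$ for odd $t,t'$, which follows from $(u^m w^t)^2=w^{2t}$ (again via $w^t u^m=u^{-m}w^t$). This yields $\phi_{s,m,t}\circ\phi_{s',m',t'}=\phi_{ss',\,m+sm',\,tt'}$. Reading off the three coordinates, the $t$-coordinate multiplies independently while $(s,m)$ obeys the affine law $(s,m)(s',m')=(ss',\,m+sm')$; thus $Aut(D'_{n\cdot 2^q})\cong(\bbZ_n\rtimes\bbZ_n^*)\times\bbZ_{2^q}^*$, the first factor being the affine group of $\bbZ_n$.

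Then I would identify $\inn$. Conjugation by $u$ sends $w\mapsto u^2w$ and fixes $u$, i.e.\ it is $\phi_{1,2,1}$; conjugation by $w$ sends $u\mapsto u^{-1}$ and fixes $w$, i.e.\ it is $\phi_{-1,0,1}$. Since $n$ is odd, $2$ is invertible mod $n$, so $\phi_{1,2,1}$ generates all $\phi_{1,m,1}$, and together with $\phi_{-1,0,1}$ this gives exactly $\{\phi_{\pm1,m,1}\mid m\in\bbZ_n\}$, a subgroup of order $2n$. As the center is $\zhclass{w^2}\cong\bbZ_{2^{q-1}}$ (Table~\ref{char_subgp}), we have $|\inn|=|D'_{n\cdot 2^q}|/|Z|=2n$, so this subgroup is all of $\inn$. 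Finally, inside $Aut\cong(\bbZ_n\rtimes\bbZ_n^*)\times\bbZ_{2^q}^*$ the inner automorphisms are precisely the translations $\bbZ_n$ together with the sign $\{\pm1\}\subset\bbZ_n^*$, and are trivial in the $\bbZ_{2^q}^*$-factor; quotienting out the normal translation subgroup first collapses the affine factor to $\bbZ_n^*$, after which $\inn$ becomes $\{\pm1\}$, giving $Out(D'_{n\cdot 2^q})\cong(\bbZ_n^*/\{\pm1\})\times\bbZ_{2^q}^*=(\bbZ_n^*/\bbZ_2)\times\bbZ_{2^q}^*$. I expect the only real obstacle to be the bookkeeping in the composition law, together with the verification that $\inn$ is exactly the $s=\pm1$, $t=1$ slice; the order count via the known center makes the latter clean.
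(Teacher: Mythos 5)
Your proof is correct, and it reaches the result by a genuinely different route in its first half. The paper does not compute $Aut(D'_{n\cdot 2^q})$ by hand: it invokes Proposition 1.3 of \cite{GoGo} for split metacyclic groups, which gives the short exact sequence $1\to Der_{\alpha}(\bbZ_{2^q},\bbZ_n)\to Aut(\bbZ_n\rtimes_\alpha\bbZ_{2^q})\to \bbZ_n^*\times\bbZ_{2^q}^*\to 1$ with $Der_\alpha\cong\bbZ_n$, observes that the sequence splits, and so obtains $Aut\cong\bbZ_n\rtimes(\bbZ_n^*\times\bbZ_{2^q}^*)$ --- the same group you get as $(\bbZ_n\rtimes\bbZ_n^*)\times\bbZ_{2^q}^*$, since $\bbZ_{2^q}^*$ acts trivially. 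You instead derive this directly from the presentation and Lemma~\ref{conjD2q}: characteristicity of $\zhclass{u}$ forces $\phi(u)=u^s$, the order count forces $\phi(w)=u^mw^t$ with $t$ odd, and the identity $(u^mw^t)^{t'}=u^mw^{tt'}$ yields the explicit composition law. Your route is longer but self-contained and produces the multiplication table of $Aut$ as a byproduct (and sidesteps the issue the paper flags, that \cite[Theorem 2.4(2)]{GoGo} contains an error needing correction). From that point on the two arguments coincide: both identify conjugation by $u$ as translation by $2$ (which generates all translations since $n$ is odd) and conjugation by $w$ as $u\mapsto u^{-1}$, so $\inn\cong\bbZ_n\rtimes\{\pm1\}$ sitting inside the affine factor, and the quotient is $(\bbZ_n^*/\bbZ_2)\times\bbZ_{2^q}^*$. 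Your extra cross-check $|\inn|=|G|/|Z(G)|=2n$ via Table~\ref{char_subgp} is a clean touch the paper omits, though it is not strictly needed once one knows $\inn$ is generated by conjugation by the generators $u$ and $w$.
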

\begin{proof} Here we give an alternate proof of this reuslt as \cite[Theorem 2.4(2)]{GoGo} contains an error which will be corrected. From Proposition 1.3 of \cite{GoGo} we have a short exact sequence,
$$1\to Der_{\alpha}(\mathbb Z_{2^q} , \mathbb Z_{n}) \to
 Aut(\mathbb Z_{n} \rtimes _{\alpha} \mathbb Z_{2^q})\to \eta(\mathbb Z_{n}\rtimes_{\alpha} \mathbb Z_{2^q})\to 1$$
where  $Der_{\alpha}( \mathbb Z_{2^q}, \mathbb Z_{n})\cong \mathbb Z_{n}$, and
$\eta(\mathbb Z_{n}\rtimes_{\alpha} \mathbb Z_{2^q})\cong \mathbb Z_n^*\times \mathbb Z_{2^q}^*$. Since the group
$\mathbb Z_n^*\times \mathbb Z_{2^q}^*$
embeds naturally in  $Aut(\mathbb Z_{n} \rtimes _{\alpha} \mathbb Z_{2^q})$ we have a section. Thus, we obtain
$$ Aut( \mathbb Z_{n} \rtimes _{\alpha} \mathbb Z_{2^q})\cong \mathbb Z_{n}\rtimes (\mathbb Z_n^*\times \mathbb Z_{2^q}^*).$$
Denote by $(a,(b,c))$ a typical element of $Aut( \mathbb Z_{n} \rtimes _{\alpha} \mathbb Z_{2^q})$ where $a\in \mathbb Z_{n}, (b,c)\in \mathbb Z_n^*\times \mathbb Z_{2^q}^*$. Here, we write $\mathbb Z_k$ as an additive group and $\mathbb Z_k^*$ as a multiplicative group.
Now let us factor this group by the inner automorphisms. Write $\langle x \rangle =\mathbb Z_n$ and $\langle y\rangle = \mathbb Z_{2^q}$.

The conjugation of $(x,0)$ by $(0,y)$ yields $(-x,0)$ thus the corresponding inner automorphism is the element $(0, (-1,1))$. The conjugation of $(0,y)$ by $(x,0)$ yields $(2x,y)$ thus the corresponding inner automorphism is the element $(2, (1,1))$. It follows that the inner automorphisms form the subgroup
$\mathbb Z_{n}\rtimes (\mathbb Z_2\times \{1\}).$ Therefore the quotient is isomorphic to
$(\mathbb Z_n^* /\mathbb Z_2)\times \mathbb Z_{2^q}^*$
and the result follows.
\end{proof}

\begin{theorem}\label{domain=dicyclic}
Let $\varphi$ be a surjective non-trivial homomorphism from $D'_{n\cdot 2^q}$ to $G$ induced by a map from $S^3/D'_{n\cdot 2^q}$ to some $3$-dimensional spherical manifold $S^3/G$. Then $\varphi = \eta'\psi \eta''$, where $\eta''\in Aut(D'_{n\cdot 2^q})$ and $\eta'\in Aut(G)$. Moreover, all possible $G$ and $\psi$ are listed as follows:
\begin{center}
\begin{tabular}{|l|l|l|l|l|l|l|}
\hline
$G$ & $S^3/G$ & $\psi(u)$ & $\psi(w)$ &    $\dg(\psi)$ \\ \hline
$\bbZ_{2^q}$ & $L(2^{q}; 1+2^{q-1}, 1)$
 & $1$ & $c$ & $(1-2^{q-1})n+n^{2^{q-1}-1}$ \\ \hline
$\bbZ_{2^{r}}$ \ $0<r<q$ & $L(2^{r}; 1, 1)$
 & $1$ & $c$ & $2^{q-r}n^{(q-r)2^{q-1}-1}$ \\ \hline
$D'_{k\cdot 2^r}$  \ $k|n, k>1, 1<r<q$ & $S^3/D'_{k\cdot 2^r}$
 & $\bar u$ & $\bar w$ & $\frac{n}{k}(1-k^{2^{r-1}})2^{r-q}(1-k^{2^{q-r-1}})$\\
 & & & & $\ \ +k n^{2^{r-1}-1} 2^{q-r}$  \\ \hline
$D'_{k\cdot 2^q}$ & $S^3/D'_{k\cdot 2^q}$
 & $\bar u$ & $\bar w$ &  $(1-n^{2^{q-1}})\frac{n}{k}+ kn^{2^{q-1}-1}$ \\ \hline
\end{tabular}
\end{center}
\end{theorem}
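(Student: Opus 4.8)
The plan is to exploit surjectivity at the outset: since $\varphi$ is onto, $G\cong D'_{n\cdot 2^q}/\ker\varphi$ is a quotient of $D'_{n\cdot 2^q}$, so the admissible targets are exactly the quotients already tabulated in Lemma~\ref{subgroupD2q}. Reading off the rows there that carry a quotient (rows 2--8), the a priori candidates are $\bbZ_n\rtimes\bbZ_2$, $D'_{n\cdot 2^r}$ $(1<r<q)$, $\bbZ_{2^q}$, $D'_{k\cdot 2^q}$, $\bbZ_{2^r}$ $(0<r<q)$, $\bbZ_k\rtimes\bbZ_2$, and $D'_{k\cdot 2^r}$. First I would impose that $G$ actually be the fundamental group of a spherical $3$-manifold. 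The two semidirect products $\bbZ_n\rtimes\bbZ_2$ and $\bbZ_k\rtimes\bbZ_2$ (coming from $\ker\varphi=\zhclass{w^2}$, resp. $\zhclass{u^kw^2}$) are the ordinary dihedral groups $D_{2n}$, $D_{2k}$ with $n,k$ odd and $>1$; since a dihedral group contains a non-cyclic subgroup $D_{2p}$ of order $2p$ for each prime $p$ dividing the odd part, it violates the $2p$-condition for free actions on spheres and so is not a spherical $\pi_1$. Discarding these leaves precisely the four families in the statement, namely $\bbZ_{2^q}$, $\bbZ_{2^r}$ ($0<r<q$), $D'_{k\cdot 2^r}$ ($k\mid n$, $k>1$, $1<r<q$, with $k=n$ giving $D'_{n\cdot 2^r}$), and $D'_{k\cdot 2^q}$.

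Next I would handle the normalization $\varphi=\eta'\psi\eta''$. Each surviving kernel is, for fixed parameters, essentially unique up to $\mathrm{Aut}(D'_{n\cdot 2^q})$: the cyclic normal subgroups $\zhclass{u^k}$ and $\zhclass{w^{2^r}}$ are characteristic (they are the unique subgroups of their order inside the commutator $\zhclass{u}$ and the center $\zhclass{w^2}$ respectively), and the remaining ones are carried onto a standard representative by a suitable $\eta''\in\mathrm{Aut}(D'_{n\cdot 2^q})$, using the description of $\mathrm{Out}(D'_{n\cdot 2^q})$ in the preceding corollary. After applying such an $\eta''$, the map $\varphi\circ(\eta'')^{-1}$ factors through the standard projection onto $G$, and the two resulting isomorphisms of $G$ differ by an element $\eta'\in\mathrm{Aut}(G)$; this yields $\varphi=\eta'\psi\eta''$ with $\psi$ the tabulated standard homomorphism.

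For the degrees the key tool is a covering-restriction identity. If $H\le D'_{n\cdot 2^q}$ maps under $\psi$ onto a cyclic subgroup $\psi(H)\le G$, then the covering $S^3/H\to S^3/D'_{n\cdot 2^q}$ has degree $[D'_{n\cdot 2^q}:H]$, whence $\dg(\psi|_H)\equiv [D'_{n\cdot 2^q}:H]\,\dg(\psi)\ (\mathrm{mod}\ |\psi(H)|)$, and $\dg(\psi|_H)$ is computed from Lemma~\ref{deglenspace} once the relevant lens spaces are identified via Lemma~\ref{alllensspace}. For the cyclic targets $\bbZ_{2^q}$ and $\bbZ_{2^r}$ one restriction ($H=\zhclass{w}$) already surjects, determining $\dg(\psi)$ mod $|G|$. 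For the dicyclic targets I would use that $|G|=k\cdot 2^\bullet$ with $\gcd(k,2)=1$ and combine by the Chinese remainder theorem the residue mod $k$, obtained by restricting to the commutator $\zhclass{u}\to\zhclass{\bar u}\cong\bbZ_k$, with the residue mod $2^\bullet$, obtained from $\zhclass{w}\to\zhclass{\bar w}$. When the $2$-part is full ($G=D'_{k\cdot 2^q}$, and its degeneration $\bbZ_{2^q}$) there is a cleaner route: $G$ is then the subgroup $\zhclass{u^{n/k},w}$ of $D'_{n\cdot 2^q}$, so $\iota\circ\psi$ is the endomorphism $\psi_{3,k}$ of Lemma~\ref{selfdegreeD'}, and Lemma~\ref{degfromself} gives $\dg(\iota\psi)=\tfrac{|D'_{n\cdot 2^q}|}{|G|}\,\dg(\psi)$; dividing $\dg(\psi_{3,k})=(1-n^{2^{q-1}})\tfrac{n^2}{k^2}+n^{2^{q-1}}$ by $n/k$ recovers the listed value $(1-n^{2^{q-1}})\tfrac nk+kn^{2^{q-1}-1}$.

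The main obstacle is the mixed family $D'_{k\cdot 2^r}$ with $r<q$. Here $G$ is \emph{not} a subgroup of $D'_{n\cdot 2^q}$ (its $2$-part is too small), so the endomorphism shortcut is unavailable and one must run the full two-restriction computation, or instead factor $\psi$ as $D'_{n\cdot 2^q}\to D'_{k\cdot 2^q}\to D'_{k\cdot 2^r}$ and multiply degrees. Either way the delicate point is bookkeeping the orientations and covering degrees simultaneously: the target lens spaces $S^3/\zhclass{\bar w}=L(2^r;1+2^{r-1},1)$ carry specific orientations inherited from $S^3$, the inverses $[1+2^{q-1}]^{-1}_{(2^q)}$ must be reduced correctly (using $(1+2^{q-1})^2\equiv 1$), and the two CRT idempotents across the coprime moduli $k$ and $2^r$ must be assembled so that the cross-terms $2^{2q-r-1}$ and $2^{q-1}$ collapse modulo $2^r$. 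Carrying this through is exactly what produces the asymmetric expression $\tfrac nk(1-k^{2^{r-1}})2^{r-q}(1-k^{2^{q-r-1}})+kn^{2^{r-1}-1}2^{q-r}$, and verifying that it indeed reduces to the two residues computed above is the most error-prone step.
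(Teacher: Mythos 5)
Your proposal is correct and follows essentially the same route as the paper: the admissible targets are read off from the quotients in Lemma~\ref{subgroupD2q}, the mixed case $D'_{k\cdot 2^r}$ ($1<r<q$) is handled by restricting to $\zhclass{u}$ and $\zhclass{w}$, applying Lemma~\ref{deglenspace} via the lens-space identifications of Lemma~\ref{alllensspace}, and combining the two residues by the Chinese remainder theorem, while the remaining cases are reduced to Lemma~\ref{selfdegreeD'} through Lemma~\ref{degfromself}. The only difference is that you make explicit the elimination of the dihedral quotients and the normalization by automorphisms, steps the paper leaves implicit.
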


\begin{proof}
If  $\ker \psi = \zhclass{u^kw^{2^r}}\cong\bbZ_{\frac{n}{k}\cdot 2^{q-r}}$ ($k|n$, $k>1$ and $1<r<q$). The homomorphism $\psi: D'_{n\cdot 2^q}\to D'_{k\cdot 2^r}$ can be given by $\psi(u) = \bar u$, $\psi(w) = \bar w$. By Lemma~\ref{alllensspace},  $S^3/\zhclass{w}$ and $S^3/\zhclass{\bar w}$ are respectively lens spaces $L(2^q; 1+2^{q-1},1)$ and  $L(2^r; 1+2^{r-1},1)$. By Lemma~\ref{deglenspace}, $\dg\psi|_{\zhclass{w}} \equiv 2^{q-r}(1+2^{r-1})[(1+2^{q-1})]_{(2^q)}^{-1} = 2^{q-r}(1+2^{r-1})(1-2^{q-1})\equiv 2^{q-r} \tmod{2^r}$. On the other hand, by Lemma~\ref{alllensspace} again,  $S^3/\zhclass{u}$ and $S^3/\zhclass{\bar u}$ are respectively lens spaces $L(n; 1,-1)$ and  $L(k; 1,-1)$. By Lemma~\ref{deglenspace}, $\dg\psi|_{\zhclass{u}} \equiv 1 \tmod{k}$. It follows that
$$
\begin{array}{rcl}
\dg(\psi)
 & = & (1-k^{2^{r-1}})\frac{n}{k}[2^{q-r}]_{(k)}^{-1} +  k^{2^{r-1}} 2^{q-r} [\frac{n}{k}]_{(2^r)}^{-1}\\
 & = & (1-k^{2^{r-1}})\frac{n}{k}2^{r-q}(1-k^{2^{q-r-1}})+k^{2^{r-1}} 2^{q-r} (\frac{n}{k})^{2^{r-1}-1}\\
 & \equiv & (1-k^{2^{r-1}})\frac{n}{k}2^{r-q}(1-k^{2^{q-r-1}})+k n^{2^{r-1}-1} 2^{q-r}  \tmod{k\cdot 2^r} \\
\end{array}
$$

The other cases come from Lemma~\ref{selfdegreeD'}, by using Lemma~\ref{degfromself}.
\end{proof}

Next Lemma deals with the case where the domain is in the form of a direct product.

\begin{Lemma}
Let $\varphi: \bbZ_m \times G_1\to G'$ be a surjective homomorphism. Then $G'=\bbZ_{k}\times G_2$ and $\varphi= \varphi_1\times \varphi_2$. Moreover, both of $\varphi_1: \bbZ_m\to \bbZ_k$ and $\varphi_2: G_1\to G_2$ are surjective, and $\dg \varphi = (|G_1|\dg \varphi_2 + m \dg \varphi_1) [|G_1|+m]^{-1}_{(m|G_1|)}$.
\end{Lemma}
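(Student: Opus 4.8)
The plan is to prove the statement in two stages: first the algebraic splitting of $G'$ and $\varphi$, then the degree formula.

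For the splitting I would set $A=\varphi(\bbZ_m\times\{1\})$ and $B=\varphi(\{1\}\times G_1)$. Since $\bbZ_m\times\{1\}$ and $\{1\}\times G_1$ commute elementwise and together generate $\bbZ_m\times G_1$, the images $A$ and $B$ commute elementwise and satisfy $G'=AB$ by surjectivity. Because $\bbZ_m\times G_1$ is the fundamental group of a spherical $3$-manifold, $(m,|G_1|)=1$ by Proposition~\ref{presentation}; as $|A|$ divides $m$ and $|B|$ divides $|G_1|$, we get $(|A|,|B|)=1$, hence $A\cap B=1$ and $G'=A\times B$ internally. Now $A$ is a quotient of $\bbZ_m$, so $A\cong\bbZ_k$ with $k\mid m$, and putting $G_2=B$ gives $G'=\bbZ_k\times G_2$. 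Writing $\varphi_1=\varphi|_{\bbZ_m}$ and $\varphi_2=\varphi|_{G_1}$, we have $\varphi(x,y)=\varphi(x,1)\varphi(1,y)=(\varphi_1(x),\varphi_2(y))$, so $\varphi=\varphi_1\times\varphi_2$ with both factors surjective onto $\bbZ_k$ and $G_2$ respectively.

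For the degree I would realize $\varphi$ by a map $f\colon S^3/(\bbZ_m\times G_1)\to S^3/G'$. Since $|G'|=k|G_2|$ with $(k,|G_2|)=1$, by the Chinese remainder theorem it suffices to pin down $\dg\varphi$ modulo $k$ and modulo $|G_2|$, and I would get each residue by pulling back the two natural intermediate covers of the target, namely $S^3/G_2\to S^3/G'$ (degree $k$) and $S^3/\bbZ_k\to S^3/G'$ (degree $|G_2|$). Using $\varphi^{-1}(\{1\}\times G_2)=\ker\varphi_1\times G_1$ and $\varphi^{-1}(\bbZ_k\times\{1\})=\bbZ_m\times\ker\varphi_2$, the pullbacks are $S^3/(\bbZ_{m/k}\times G_1)$ and $S^3/(\bbZ_m\times\ker\varphi_2)$, and the lifts of $f$ are maps into $S^3/G_2$ and $S^3/\bbZ_k$ realizing $(x,y)\mapsto\varphi_2(y)$ and $(x,y)\mapsto\varphi_1(x)$. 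Because the two vertical covers have equal degree over source and target, each lift has the same degree as $f$, and I would then extract its residue from $\dg\varphi_2$ (resp. $\dg\varphi_1$) together with the degree of the complementary covering, invoking Lemma~\ref{degfromself} for the bookkeeping of composing with a covering projection.

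Assembling the two residues by the Chinese remainder theorem produces the closed form, in which $[|G_1|+m]^{-1}_{(m|G_1|)}$ is exactly the compact reconstruction coefficient; its existence is guaranteed because $(m+|G_1|,\,m|G_1|)=1$, which follows from $(m,|G_1|)=1$. The step I expect to be the main obstacle is precisely this constant-tracking. The manifolds involved are all three-dimensional, so $f$ is genuinely \emph{not} a product of lower-dimensional maps, and the residues of $\dg\varphi$ pick up multiplicative factors both from the covering degrees $m$ and $|G_1|$ and from the scalar (lens-space) identifications governed by Lemma~\ref{alllensspace} and Lemma~\ref{deglenspace}. Verifying that these factors collapse into exactly the stated weights $|G_1|$ and $m$ and the inverse $[|G_1|+m]^{-1}_{(m|G_1|)}$ is the delicate part; the remainder of the argument is formal.
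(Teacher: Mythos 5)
The paper states this lemma without proof, so there is nothing to compare against; your argument has to stand on its own. Your first half does: the splitting $G'=\bbZ_k\times G_2$, $\varphi=\varphi_1\times\varphi_2$ via the coprimality $(m,|G_1|)=1$ is correct and complete. The second half, however, stops exactly where the content is. The lift of $f$ over the cover $S^3/G_2\to S^3/(\bbZ_k\times G_2)$ is a map $S^3/(\bbZ_{m/k}\times G_1)\to S^3/G_2$ inducing $\varphi_2\circ\mathrm{pr}$, so the residue you extract mod $|G_2|$ is $\dg(\varphi_2\circ\mathrm{pr})=\dg(\varphi_2)\cdot\dg(\mathrm{pr})$, where $\mathrm{pr}\colon\bbZ_{m/k}\times G_1\to G_1$ is the projection off a direct factor. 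Lemma~\ref{degfromself} cannot supply $\dg(\mathrm{pr})$: it factors a homomorphism through its image and post-composes with a covering inclusion, whereas here the projection is already surjective. The missing computation is easy but essential: since $\mathrm{pr}\circ\iota=\mathrm{id}$ for the inclusion $\iota\colon G_1\hookrightarrow\bbZ_{m/k}\times G_1$, realized by a covering of degree $m/k$, one gets $\dg(\mathrm{pr})=[m/k]^{-1}_{(|G_1|)}$. Without this step your proof has a genuine gap.

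Carrying the computation out also shows that the "constant-tracking" you defer does not close as you hope. One finds $\dg\varphi\equiv\dg(\varphi_2)\,[m/k]^{-1}\pmod{|G_2|}$ and $\dg\varphi\equiv\dg(\varphi_1)\,[\,|G_1|/|G_2|\,]^{-1}\pmod{k}$, and these residues do not assemble into the stated closed form. Concretely, take $m=k=5$, $G_1=G_2=Q_8=D^*_{4\cdot2}$, $\varphi_2=\mathrm{id}$ and $\varphi_1(c)=c^2$, so $\dg\varphi_1=4$ and $\dg\varphi_2=1$. Lifting over $1\times Q_8$ (equal indices on both sides) gives $\dg\varphi\equiv1\tmod{8}$, and lifting over $\bbZ_5\times1$ gives $\dg\varphi\equiv4\tmod{5}$, hence $\dg\varphi=9\in\bbZ_{40}$; the displayed formula yields $(8\cdot1+5\cdot4)[13]^{-1}_{(40)}=28\cdot37\equiv36\tmod{40}$, which is $\equiv4\tmod 8$. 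So the weights $|G_1|$ and $m$ do not come out as stated (in this injective case they appear swapped, and when $k<m$ or $|G_2|<|G_1|$ additional factors $[m/k]^{-1}$ and $[\,|G_1|/|G_2|\,]^{-1}$ enter). The step you flagged as "the delicate part" is therefore not a formality: it is where the proof must be done, and doing it forces a correction to the formula you set out to verify.
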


\section{Computations}

We conclude this paper with some computations of the mapping degree of certain spherical $3$-manifolds.

In order to compute $D(S^3/G_1, S^3/G_2)$, our strategy is as follows: (1) look for all normal subgroups $H$ of $G_1$ such that $G_1/H$ is isomorphic to a subgroup of $G_2$; (2) find the degree of $\varphi_H: G_1\to G_1/H$; (3) look for all embedding $\xi: G_1/H\to G_2$. The degrees of epimorphisms $\varphi_H:G_1\to G_1/H$ were computed in
Lemma 3.3 if $G_1$ is cyclic, and in Theorems 4.4, 4.5, 4.6, 4.9, 4.12 if $G_1$
is one of the groups listed in Table 2.1.

Thus Lemma~\ref{degfromself} will compute the set 
 $\{\dg(\xi)\dg(\varphi_H)\}$. Here, $\dg(\xi) = \dg(\eta')|G_2||H|/|G_1|$ for some $\eta'\in Out(G_1/H)$. The set $D(M,N)$ can be determined once
$\overline{\deg}(\psi)$ is for all $\psi$. 


\begin{example}
Let $G_1=D'_{n\cdot 2^q}$ and $G_2=D'_{m\cdot 2^r}$ with $r<q$.
\end{example}

Suppose $f:M_1\to M_2$ is a map between two spherical $3$-manifolds with $\pi_1(M_1)=G_i$ for $i=1,2$ and $\varphi=f_{\sharp}: G_1\to G_2$.
By Lemma~\ref{subgroupD2q}, if $\ker \varphi$ is non-trivial, then $\ker \varphi$ is either  $\bbZ_{\frac{n}{k}\cdot 2^{q-r}}$ with $k|(n,m)$ or $\bbZ_{n\cdot 2^{l}}$ with $1< l \le r$.

(1) $\ker \varphi = \zhclass{u^kw^{2^r}}\cong \bbZ_{\frac{n}{k}\cdot 2^{q-r}}$ with $k|(n,m)$. By Theorem~\ref{domain=dicyclic}, we have $$\dg(\varphi) = \frac{m}{k}\cdot \dg(\eta')(\frac{n}{k}(1-k^{2^{r-1}})2^{r-q}(1-k^{2^{q-r-1}})+k n^{2^{r-1}-1} 2^{q-r}) \dg(\eta''),$$
where $\eta''\in Aut(D'_{n\cdot 2^q})$ and $\eta'\in Aut(D'_{q\cdot 2^r})$.

(2) $\ker \varphi = \zhclass{uw^{2^{q-l}}}\cong \bbZ_{n\cdot 2^{l}}$ with $1< l \le r$. By Theorem~\ref{domain=dicyclic}, we have $$\dg(\varphi) = 2^{r-l}\dg(\eta')(2^{q-l}n^{(q-l)2^{q-1}-1}) \dg(\eta''),$$
where $\eta''\in Aut(D'_{n\cdot 2^q})$ and $\eta'\in Aut(D'_{q\cdot 2^r})$.

Clearly, if $\ker \varphi= D'_{n\cdot 2^q}$, then $\dg(\varphi) \equiv 0 \tmod{m\cdot 2^r}$.  Thus, we can write down the set $D(S^3/D'_{n\cdot 2^q}, S^3/D'_{m\cdot 2^r})$.

\begin{example}
Let $G_1=\bbZ_{120}$ and $G_2=O^*_{48}\times \bbZ_5$.
\end{example}

By Lemma~\ref{alllensspace}, the manifold $S^3/(O^*_{48}\times \bbZ_5)$ is covered by three lens spaces $L(30; 1, 11)$, $L(40; 3,13)$ and $L(40; -7, 23)$. Thus, any map from $L(120; r_1, r_2)$ can be lifted to a map into one of these three lens spaces.

Note that the degrees from these three lens spaces to $S^3/(O^*_{48}\times \bbZ_5)$ are respectively $8$, $6$ and $6$. By Theorem \ref{degreebetweenlensspaces}, $D(L(120; r_1, r_2), S^3/(Q_{48}\times \bbZ_5))$ consists of the following three sets:
$$
\begin{array}{l}
\{j^2\cdot 1\cdot 11 \frac{120\cdot 30}{30^2}[r_{1}]^{-1}_{(120)}[r_{2}]^{-1}_{(120)}\cdot 8 + 240k \mid k\in \mathbb Z, j=0,1,\ldots, 29\} \\
\{j^2\cdot 3\cdot 13 \frac{120\cdot 40}{40^2}[r_{1}]^{-1}_{(120)}[r_{2}]^{-1}_{(120)}\cdot 6 + 240k \mid k\in \mathbb Z, j=0,1,\ldots, 39\} \\
\{j^2\cdot (-7)\cdot 23 \frac{120\cdot 40}{40^2}[r_{1}]^{-1}_{(120)}[r_{2}]^{-1}_{(120)}\cdot 6 + 240k \mid k\in \mathbb Z, j=0,1,\ldots, 39\} \\
\end{array}
$$
Note that any lens space $L(120; r_1, r_2)$ must be homeomorphic to one of the form  $L(120; 1, r)$, where
$$r\in\{1, 7, 11, 13, 19, 23, 29, 31, 41, 43, 49, 59\}.$$
We list all the elements of $D(L(120; r_1, r_2),\ S^3/(O^*_{48}\times \bbZ_5))$ as follows.
\begin{center}
\begin{tabular}{|l|l|l|}
  \hline
  $r$ &  $D(L(120; 1, r), S^3/(O^*_{48}\times \bbZ_5))$\\
  \hline
 $1$ & $\{0, 30, 48, 72, 78, 112, 120, 160, 168, 192, 208, 222\}$ \\ \hline
 $7$ & $\{0, 16, 24, 64, 66, 96, 114, 120, 144, 160, 210, 216\}$ \\ \hline
$11$ &  $\{0, 32, 42, 48, 72, 80, 90, 120, 128, 138, 168,
   192\}$\\ \hline
$13$ & $\{0, 6, 16, 24, 54, 64, 96, 120, 144, 150, 160,
   216\}$ \\ \hline
$19$ &  $\{0, 42, 48, 72, 90, 112, 120, 138, 160, 168, 192,
   208\}$ \\ \hline

$23$ &  $\{0, 24, 66, 80, 96, 114, 120, 144, 176, 210, 216,
   224\}$  \\ \hline

$29$ &  $\{0, 32, 48, 72, 80, 102, 120, 128, 150, 168, 192,
   198\}$  \\ \hline

$31$ & $\{0, 18, 48, 72, 112, 120, 160, 162, 168, 192, 208,
    210\}$ \\ \hline

$41$ & $\{0, 30, 32, 48, 72, 78, 80, 120, 128, 168, 192,
   222\}$ \\ \hline

$43$ &  $\{0, 16, 24, 64, 90, 96, 120, 144, 160, 186, 216,
   234\}$  \\ \hline

$49$ & $\{0, 30, 48, 72, 78, 112, 120, 160, 168, 192, 208,
   222\}$ \\ \hline

$59$  &  $\{0, 32, 42, 48, 72, 80, 90, 120, 128, 138, 168,
   192\}$ \\ \hline
\end{tabular}
\end{center}

\end{document}